\documentclass[11pt,reqno]{amsart}

\usepackage[margin=1.2in]{geometry}
\usepackage{amsmath,amssymb,amsthm,amscd,mathrsfs,mathtools}
\usepackage{bbm}
\usepackage{esint}
\usepackage{cases}
\usepackage{cite}
\usepackage{enumerate}
\usepackage{graphicx}
\usepackage[dvipsnames]{xcolor}
\usepackage[implicit=true]{hyperref}
\usepackage[nameinlink,noabbrev]{cleveref}
\usepackage{microtype}

\numberwithin{equation}{section}

\newtheorem{theorem}{Theorem}[section]
\newtheorem{lemma}[theorem]{Lemma}
\newtheorem{remark}[theorem]{Remark}
\newtheorem{definition}[theorem]{Definition}
\newtheorem{proposition}[theorem]{Proposition}
\newtheorem{example}[theorem]{Example}
\newtheorem{corollary}[theorem]{Corollary}

\def\e{{\mathrm{e}}}
\def\eps{\varepsilon}

\def\p{\partial}

\def\<{{\langle}}
\def\>{{\rangle}}

\newcommand{\bx}{\begin{example}}\newcommand{\ex}{\end{example}}

\def\dif{{\mathord{{\rm d}}}}

\def\min{{\mathord{{\rm min}}}}

\def\no{\nonumber}
\def\={&\!\!=\!\!&}
\def\bt{\begin{theorem}}
\def\et{\end{theorem}}
\def\bl{\begin{lemma}}
\def\el{\end{lemma}}
\def\br{\begin{remark}}
\def\er{\end{remark}}
\def\bd{\begin{definition}}
\def\ed{\end{definition}}
\def\bp{\begin{proposition}}
\def\ep{\end{proposition}}
\def\bc{\begin{corollary}}
\def\ec{\end{corollary}}
\def\cA{{\mathcal A}}

\def\cD{{\mathcal D}}
\def\cE{{\mathcal E}}
\def\cF{{\mathcal F}}
\def\cG{{\mathcal G}}
\def\cH{{\mathcal H}}
\def\cI{{\mathcal I}}

\def\cL{{\mathcal L}}

\def\cO{{\mathcal O}}

\def\cR{{\mathcal R}}
\def\cS{{\mathcal S}}
\def\cT{{\mathcal T}}

\def\mB{{\mathbb B}}

\def\mE{{\mathbb E}}

\def\mN{{\mathbb N}}

\def\mP{{\mathbb P}}

\def\mR{{\mathbb R}}

\def\mT{{\mathbb T}}

\def\mZ{{\mathbb Z}}

\def\bC{{\mathbf C}}

\def\sI{{\mathscr I}}
\def\sJ{{\mathscr J}}

\def\sL{{\mathscr L}}

\def\sN{{\mathscr N}}

\def\sW{{\mathscr W}}

\def\ba{\begin{align}}
\def\ea{\end{align}}

\def\geq{\geqslant}
\def\leq{\leqslant}

\def\loc{\mathord{{\rm loc}}}

\allowdisplaybreaks

\begin{document}
	
	\title[]{
Uniform-in-time diffusion approximations for multiscale stochastic systems}
	
	\date{}
	
	\author{Longjie Xie and Xicheng Zhang}

\address{Longjie Xie:
	School of Mathematics and Statistics, Jiangsu Normal University,
	Xuzhou, Jiangsu 221000, China\\
	Email: longjiexie@jsnu.edu.cn
}

\address{Xicheng Zhang: School of Mathematics and Statistics, Beijing Institute of Technology, Beijing 100081, China;
Faculty of Computational Mathematics and Cybernetics, Shenzhen MSU-BIT University, 518172 Shenzhen, China.\\ Email: xczhang.math@bit.edu.cn
}

\thanks{
    This work is supported by National Key R\&D program of China (No. 2023YFA1010103) and NSFC (No. 12471140,12595282).
}

	\begin{abstract}
This paper establishes a quantitative, uniform-in-time diffusion approximation for the joint law of a broad class of fully coupled multiscale stochastic systems. We derive a precise characterization of the limiting joint distribution as a specific skew-product of the conditional equilibrium of the fast process and the homogenized law of the slow component, thereby providing a rigorous uniform-in-time formulation of the adiabatic elimination principle. The convergence rate  explicitly separates the initial relaxation of the fast dynamics from the long-time homogenized evolution and depends only on the regularity of the coefficients in the slow variable. As a consequence, we obtain the first quantitative  identification  of the limiting stationary distribution of the original multiscale system and prove the commutativity of the limits $\eps\to0$ and $t\to\infty$ for a large class of observables. Our framework accommodates unbounded and irregular coefficients, degenerate structures, and weakly mixing dynamics. We illustrate its scope with three applications: {\it (i)} a uniform-in-time averaging principle for fast-slow systems; {\it (ii)} a uniform Smoluchowski--Kramers approximation for degenerate Langevin systems, yielding convergence of the joint position-scaled velocity law  and   global-in-time asymptotics of key thermodynamic functionals (e.g., total energy, entropy production, free energy); and {\it (iii)} the first uniform-in-time periodic homogenization result for SDEs with distributional drifts.

		\bigskip
		
		\noindent {{\bf AMS 2020 Mathematics Subject Classification:} 60H10, 60F17, 35B40, 35B30. }
		
		\bigskip
		\noindent{{\bf Keywords:} Diffusion approximation, uniform-in-time, averaging principle, Smoluchowski-Kramers approximation, periodic homogenization.}
\end{abstract}

	\maketitle
	
\tableofcontents

\section{Introduction}

\subsection{Background and motivation}

Multiscale stochastic differential equations (SDEs) provide a probabilistic counterpart to homogenization theory for partial differential equations (see e.g. \cite{HP,Par,PS}), and are essential for modeling  real-world dynamical systems involving separated time scales, with a wide range of applications spanning  planetary motion, molecular dynamics, climate science, neuroscience and  financial mathematics. We refer the
readers to the books \cite{FW,PS2} for a  comprehensive overview.  In this paper, we consider the following general, fully coupled multiscale system on $\mathbb{R}^d \times \mathbb{R}^\vartheta$:
\begin{equation}\label{sde0}
\left\{
\begin{aligned}
\dif X^\eps_t
&=\eps^{-2}b(X^{\eps}_t, Y^{\eps}_t)\dif t+\eps^{-1}c(X^{\eps}_t, Y^{\eps}_t)\dif t
+\sqrt{2}\,\eps^{-1}\sigma(X^{\eps}_t, Y^{\eps}_t)\dif W_t,\quad X_0^\eps=x,\\
\dif Y^{\eps}_t
&=F(X^{\eps}_t, Y^{\eps}_t)\dif t+\eps^{-1} H(X^{\eps}_t, Y^{\eps}_t)\dif t
+\sqrt{2}\,G(X^{\eps}_t, Y^{\eps}_t)\dif W_t,\qquad\quad  Y_0^\eps=y,
\end{aligned}
\right.
\end{equation}
where $W$ is an $m$-dimensional Brownian motion on $(\Omega,\mathcal{F},\mathbb{P})$,  the coefficient
\[
(b,c,\sigma): \mathbb{R}^d\times\mathbb{R}^\vartheta \to (\mathbb{R}^d, \mathbb{R}^d, \mathbb{R}^d\otimes\mathbb{R}^m),\qquad
(F,H,G): \mathbb{R}^d\times\mathbb{R}^\vartheta \to (\mathbb{R}^\vartheta, \mathbb{R}^\vartheta, \mathbb{R}^\vartheta\otimes\mathbb{R}^m)
\]
are measurable functions, and $\eps\in(0,1)$ quantifies the separation of time scales: the fast component $X^\eps$ evolves on the $\eps^{-2}$ scale, while the  variable $Y^\eps$ is driven simultaneously by its own intrinsic dynamics and the rapid fluctuations of $X^\eps$. This formulation is rather versatile. When $c \equiv H \equiv 0$, it reduces to the classical fast-slow setting.
More importantly, it encompasses the physically fundamental Langevin stochastic system, obtained   by taking $F\equiv 0$, $G\equiv 0$, $H(x,y)=x$, and
\[
b(x,y) = -A(y)x,\qquad c(x,y) = -\nabla_y U(y),\qquad \sigma(x,y) = \sigma(y),
\]
where $A: \mathbb{R}^d \to \mathbb{R}^d\otimes\mathbb{R}^d$ is a matrix-valued friction coefficient and $U: \mathbb{R}^d \to [0,\infty)$ is a potential. This leads to
\begin{equation}\label{lav}
\left\{
\begin{aligned}
\dif X^{\eps}_t
&= -\eps^{-2}A(Y_t^\eps)X^{\eps}_t\dif t-\eps^{-1}\nabla_y U(Y^{\eps}_t)\dif t
+\sqrt{2}\,\eps^{-1}\sigma(Y_t^\eps)\dif W_t,\\
\dif Y^{\eps}_t
&= \eps^{-1}X_t^\eps\dif t,
\end{aligned}
\right.
\end{equation}
which is  equivalent to the second-order stochastic system
\begin{equation}\label{ex11}
\eps^2\,\ddot Y_t^\eps = -\nabla_y U(Y_t^\eps) - A(Y_t^\eps)\dot Y_t^\eps + \sqrt{2}\,\sigma(Y_t^\eps)\dot W_t.
\end{equation}
The limiting regime $\eps \to 0$ in \eqref{ex11} is known as the celebrated Smoluchowski--Kramers (S-K) approximation; see, e.g., \cite{B18, BHVW17,BW18, CF, CG,CGX, HMVW, HVW, XY}.

\smallskip

The classical  diffusion approximation (or second-order averaging) provides an  effective dynamics for the slow
variable $Y_t^\eps$ in (\ref{sde0}) as $\eps\to0$ through an auxiliary Poisson equation involving the frozen fast dynamics. More precisely,  for any fixed
$y\in\mR^\vartheta$, let $X^y$ solve the frozen equation
\begin{equation}\label{sde2}
\dif X_t^y=b(X_t^y,y)\dif t+\sqrt{2}\,\sigma(X_t^y,y)\dif W_t,\qquad X_0^y=x\in\mR^d,
\end{equation}
and denote by $\mu_y$ its invariant measure.  Under suitable assumptions, there exists a unique (centered) solution
$\Phi(\cdot,y)$ to the Poisson/cell equation
\begin{equation}\label{pde10}
\sL_1\Phi(x,y):=(\sigma\sigma^*)(x,y):\nabla_x^2\Phi(x,y)+b(x,y)\cdot\nabla_x\Phi(x,y)=-H(x,y),
\end{equation}
where $:$ denotes matrix contraction. Define the homogenized drift and diffusion coefficients by
\begin{align}\label{bcf}
\begin{split}
\cF(y)&:=\mu_y\big((F+\Gamma_1)(\cdot,y)\big),\\
\cG(y)&:=\mu_y\big(\big(GG^*+(\Gamma_2+\Gamma_2^*)/2\big)(\cdot,y)\big),
\end{split}
\end{align}
with
\begin{align}\label{bcf0}
\begin{split}
\Gamma_1
&:=(c\cdot\nabla_x+H\cdot\nabla_y)\Phi+2\,\sigma G^*:\nabla_x\nabla_y\Phi,\\
\Gamma_2
&:=H\otimes\Phi+2\,(G\sigma^*)\cdot\nabla_x\Phi.
\end{split}
\end{align}
Here and throughout the paper, we employ the shorthand notation $\mu(f) := \int_{\mR^d} f(x)\mu(\dif x)$ for the expectation of a function $f$ with respect to a probability measure $\mu$ on $\mR^d$. Then, as $\eps\to0$, $Y_t^\eps$ converges weakly to $\bar Y_t$  which solves the homogenized SDE
\begin{equation}\label{sde000}
\dif \bar Y_t=\cF(\bar Y_t)\dif t+\sqrt{2}\,\bar\Sigma(\bar Y_t)\dif \bar W_t,
\qquad \bar\Sigma(y)\bar\Sigma(y)^*=\cG(y),
\end{equation}
where $\bar W$ is a $\vartheta$-dimensional Brownian motion. History  of the literature and related topics can be found in  \cite{BK,KY,PSV,Par,Pa-Ve1,Pa-Ve2,Ro-Xi1}. In the special case $c\equiv H\equiv0$, this reduces to
the classical averaging principle for fast-slow systems; see, e.g., \cite{Ch-L,DGHS,GR,HL,Ki,KY2,LS,FW93} and the references therein.

\smallskip

Despite the substantial literature, the available theory  on  diffusion approximation for fully coupled multiscale systems still exhibits several  notable limitations:

\begin{enumerate}[(i)]
\item {\bf Finite-time horizon:} Most convergence results are typically established only over finite time intervals $[0,T]$, leaving the crucial long-time behavior--particularly the stationary regimes--unclear. Even in the simpler averaging regime (i.e., when $c\equiv H\equiv0$ in  (\ref{sde0})), uniform-in-time averaging principles have only been studied relatively recently under strong dissipativity and regularity assumptions on the coefficients \cite{BDOZ,Cr-Do-Go-Ot-So}. Obviously, the uniform-in-time  diffusion approximation for the general system (\ref{sde0}) is  substantially more delicate, as it requires controlling the accumulation of  corrector terms and thus presents additional difficulties.

   \item {\bf Lack of joint law analysis:} Even within the averaging framework, prior work typically focuses solely on convergence of the marginal law of the slow variable $Y_t^\eps$. The limiting joint distribution of the slow and fast components, which is essential for understanding the full statistical  physics of the coupled multiscale system--including key thermodynamic functionals and the structure of invariant measures (information that cannot be recovered from the marginal law of the slow variable alone)--has remained largely uncharacterized.  In the specialized setting of  S-K approximation, recent works \cite{BW18,CG,CGX} have studied the convergence of stationary measures, but still only for the  marginal distribution of the slow component, leaving the full joint law unresolved.

\item {\bf Restrictive coefficient assumptions:} Many existing proofs rely on strong boundedness, uniform ellipticity, or high regularity  assumptions on the coefficients, thereby excluding important physical models  such as degenerate Langevin dynamics or systems with growing coefficients and, most strikingly, highly irregular or even distributional drifts.
\end{enumerate}

The first two points are particularly critical for understanding the stationary behavior of multiscale systems. As emphasized by Freidlin \cite{Frei}:``while the classical result of diffusion approximation provides a good approximation to the
slow process $Y_t^\eps$ of the multi-scale system, but  how is the stationary distribution of the original system  (\ref{sde0})  related to that of the homogenized equation (\ref{sde000})? Statement of diffusion approximation says nothing on this relation." To the best of our knowledge, this question has remained open in full generality.

\subsection{Main results and contributions}

In this paper, we develop a {\it quantitative and uniform-in-time diffusion approximation for the joint law} of $(X_t^\eps,Y_t^\eps)$ in the multiscale system \eqref{sde0} under a remarkably flexible and general framework that accommodates unbounded and irregular coefficients, degenerate structures, and  weakly mixing dynamics. Our results represent a significant advance beyond classical finite-horizon theory and provide a unified route from transient homogenization to stationary asymptotics. Our core contributions are as follows:

\begin{itemize}
\item {\bf Quantitative identification of the joint-law limit.} We prove that the joint law of $(X_t^\eps,Y_t^\eps)$ converges uniformly in time  to
   a specific skew-product $\mu_{y}(\dif x)\cL_{\bar Y_t}(\dif y)$ (rather than an independent product), see {\bf Theorem \ref{main}}. This limit rigorously captures the physical principle of adiabatic elimination: the fast component instantaneously equilibrates to its conditional distribution $\mu_{y}$ given the current state of the slow variable, which itself evolves according to the homogenized dynamics $\bar Y_t$.
  The explicit convergence rate comprises an initial-layer term describing the rapid relaxation of the fast motion and a main error term, which  depends only on the regularity of coefficients in the slow variable. As far as we know, this constitutes  the first uniform-in-time result for the joint law, even within the classical averaging regime. Moreover, this precise characterization enables the rigorous  asymptotic analysis of many important physical observables  that intrinsically depend on  the full system state--such as total energy, entropy production rate, and free energy--globally in time (see Subsection 1.3.2).

\item {\bf Quantitative stationary-measure convergence.} As a direct consequence, we obtain a quantitative answer to the question mentioned by Freidlin \cite{Frei}: the invariant measure $\nu_\eps$ of the original multiscale system converges weakly to the skew-product $\mu_y(\dif x)\bar\mu(\dif y)$, where $\bar\mu$ is the invariant measure of the homogenized equation, see {\bf Theorem \ref{main1}}. This provides a precise asymptotic description of the stationary regime for the multiscale system  and, more importantly, establishes the commutativity of the limits $\eps \to 0$ and $t \to \infty$ (see \eqref{com} below) for a broad class of observables. Consequently, it validates the use of the homogenized equation for long-time prediction and offers a firm foundation for multiscale numerical methods   (e.g., the Heterogeneous Multiscale Method \cite{ELV}) in the stationary regimes.

\item {\bf Weak regularity, mild dissipativity and degenerate structures.} Our assumptions are rather mild:  coefficients may be unbounded and only weighted H\"older continuous, and crucially, no non-degeneracy of the full multiscale system is imposed, making the theory applicable to genuinely degenerate models arising in physical applications.  Furthermore, our ergodicity hypotheses permit polynomial mixing rates. This is achieved through a novel set of hypotheses {\bf (H)} that are both mathematically flexible and physically verifiable:
\begin{itemize}
  \item {\bf (H$_0$)} basically ensures well-posedness and uniform moment bounds.
  \item {\bf (H$_1$)} guarantees ergodicity of the frozen fast dynamics with a weak mixing rate.
  \item {\bf (H$_2$)} assumes the homogenized system (\ref{sde000}) is non-degenerate and ergodic.
\end{itemize}
A key feature of {\bf (H$_2$)} is that the non-degeneracy and dissipativity of the homogenized limit  can be either inherited directly from the original slow dynamics
or transferred entirely from the corresponding properties of the fast motion via the corrector terms. This flexibility is crucial for applications where the slow dynamics alone are degenerate, see Section 6 for the degenerate Langevin system.
\end{itemize}

\smallskip
To state our main results  and for notational  convenience, we set $Z^\eps_t:=(X^\eps_t,Y^\eps_t)$ and define, for $z=(x,y)\in\mR^d\times\mR^\vartheta$,
\[
B_\eps(z):=
\begin{pmatrix}
\eps^{-2}b(x,y)\\
F(x,y)
\end{pmatrix},
\quad
D(z):=
\begin{pmatrix}
c(x,y)\\
H(x,y)
\end{pmatrix},
\quad
\Theta_\eps(z):=
\begin{pmatrix}
\eps^{-1}\sigma(x,y)\\
G(x,y)
\end{pmatrix}.
\]
Then the system \eqref{sde0} can be written compactly as
\begin{align}\label{SDE1}
\dif Z^\eps_t=\big[B_\eps+\eps^{-1}D\big](Z^\eps_t)\dif t+\sqrt{2}\,\Theta_\eps(Z^\eps_t)\dif W_t,\quad Z^\eps_0=z.
\end{align}

\paragraph{\textit{Notation: Polynomial weights and  weighted H\"older spaces}}
To handle unbounded coefficients and capture moment growth, we introduce a class of polynomial weight functions. For $N\in\mN$, define
\begin{align}\label{sW}
\sW_N
:=
\left\{\rho:\mR^N\to(0,\infty):\ \exists\, r\in\mR,\,
0<\inf_{x\in\mR^N}\frac{\rho(x)}{1+|x|^r}\leq \sup_{x\in\mR^N}\frac{\rho(x)}{1+|x|^r}<\infty\right\}.
\end{align}
In words, $\sW_N$ consists of functions that exhibit at most polynomial  growth or decay at infinity. Clearly, $\sW_N$ is closed under algebraic operations of addition, multiplication, and division: for any $\rho_1,\rho_2\in\sW_N$,
\begin{align}\label{Da49}
\rho_1+\rho_2,\quad \rho_1\cdot \rho_2,\quad \rho_1/\rho_2\in\sW_N.
\end{align}
Furthermore, each $\rho\in\sW_N$ satisfies a local translation estimate: there exists a constant $C=C(\rho)\geq1$ such that for all $x\in\mR^N$ and $y\in\mR^N$ with $|y|\leq1$,
\begin{equation}\label{weight-shift}
C^{-1}\rho(x)\leq \rho(x+y)\leq C\rho(x).
\end{equation}
Throughout the paper, weights in $\sW_{d+\vartheta}$ are denoted by $\varrho(z)$. Given $\rho\in\sW_d$ or $\omega\in\sW_\vartheta$,
we may view it as an element of $\sW_{d+\vartheta}$ via $\varrho(x,y)=\rho(x)$ or $\varrho(x,y)=\omega(y)$. Conversely, for any
$\varrho\in\sW_{d+\vartheta}$ there exist $(\rho,\omega)\in\sW_d\times\sW_\vartheta$ such that
\begin{equation*}
\varrho(x,y)\leq \rho(x)\,\omega(y),\quad (x,y)\in\mR^d\times\mR^\vartheta.
\end{equation*}
With polynomial weights at hand, we now introduce the corresponding anisotropic H\"older spaces.
Given $\alpha,\beta\geq0$ and weights $(\rho,\omega)\in\sW_d\times\sW_\vartheta$, we denote
\[
\bC^\alpha_\rho\bC^\beta_\omega:=\bC^\alpha_\rho(\mR^d;\bC^\beta_\omega(\mR^\vartheta)),
\quad
\bC^\alpha_{\rm p}\bC^\beta_{\rm p}:=\bigcup_{\rho\in\sW_d,\ \omega\in\sW_\vartheta}\bC^\alpha_\rho\bC^\beta_\omega.
\]
A precise definition and a discussion of the relevant properties of these spaces are provided in Section 2 below.

\medskip
\paragraph{\textit{Hypotheses.}}
Our analysis is built upon  three hypotheses concerning the multiscale system: the first ensures basic well-posedness and stability of the original system; the second guarantees the ergodicity of the frozen fast dynamics; and the third posits that the anticipated homogenized limit is itself a well-behaved, ergodic process. Each is formulated to be as weak as possible while still enabling a quantitative, uniform-in-time analysis.

We begin with the fundamental assumptions regarding the well-posedness and moment bounds for the original multiscale system \eqref{SDE1}:

\vspace{1mm}
\begin{enumerate}[{\bf (H$_0$)}]
\item
For each $z\in\mR^{d+\vartheta}$ and $\eps\in(0,1)$, there exists a unique weak solution $(Z^\eps_t)_{t\geq0}$ to \eqref{SDE1}
starting from $z$. Moreover, for every $\varrho_0\in\sW_{d+\vartheta}$ there exists $\varrho_1\in\sW_{d+\vartheta}$ such that
\begin{align}\label{Mom1}
\sup_{t\geq0}\sup_{\eps\in(0,1)}\mE\,\varrho_0(Z^\eps_t(z))\leq \varrho_1(z)<\infty,\quad z\in\mR^d\times\mR^\vartheta.
\end{align}
\end{enumerate}

\noindent
\textit{Discussion of {\bf (H$_0$)}}: The  uniformity in time moment estimates (\ref{Mom1}) is a natural and necessary  prerequisite for the uniform-in-time analysis. Such bounds can be verified for a wide range of models through Lyapunov function techniques or other model-specific arguments.

\vspace{2mm}
Next, we impose assumptions on the frozen fast dynamics (\ref{sde2}).

\vspace{1mm}
\begin{enumerate}[{\bf (H$_1$)}]
\item
\begin{enumerate}[(a)]
\item There exist $\lambda,\Lambda\in\sW_{d+\vartheta}$ such that for all $(x,y)\in\mR^d\times\mR^\vartheta$,
\begin{align*}
\lambda(x,y)|\xi|^2 \leq |\sigma(x,y)^*\xi|^2 \leq \Lambda(x,y)|\xi|^2,\quad \forall\,\xi\in\mR^d.
\end{align*}

\item For each fixed $y\in\mR^\vartheta$, there exists  a unique weak solution $X_t^y(x)$ to \eqref{sde2}. Furthermore, $X_t^y$ admits a unique invariant probability measure $\mu_y$ satisfying: there exists a decreasing function $\ell_0$ on $\mR_+$ with
$\int^\infty_0t\ell_0(t)\dif t<\infty$,
such that
for any $\rho_0\in\sW_d$, one can find a weight $\varrho_1\in\sW_{d+\vartheta}$ such that for all $(x,y)\in\mR^d\times\mR^\vartheta$ and $\varphi\in\bC^0_{\rho_0}(\mR^d)$,
\begin{align}\label{xy}
\big|\mE[\varphi(X^y_t(x))]-\mu_y(\varphi)\big|
\leq \ell_0(t)\,\varrho_1(x,y)\,\|\varphi\|_{\bC^0_{\rho_0}},
\quad \forall\,t>0.
\end{align}

\item The drift $H$ in (\ref{sde0}) satisfies the  centering condition
\begin{align*}
\mu_y\big(H(\cdot,y)\big)=\int_{\mR^d}H(x,y)\,\mu_y(\dif x)\equiv 0,\quad \forall y\in\mR^\vartheta.
\end{align*}
\end{enumerate}
\end{enumerate}

\noindent
\textit{Discussion of {\bf (H$_1$)}}:
Condition (a) is primarily used to handle unbounded and irregular coefficients with respect to the fast variable. Note that condition (b)  allows for rather general  mixing rates  (including polynomial decay of $\ell_0$),  which is essential for applications where only weak ergodicity is available. Finally, condition (c) is  classical  and  necessary to ensure that the $O(\eps^{-1})$ term $H$ contributes to the effective drift and diffusion at the macroscopic scale. It serves as the probabilistic analogue of the solvability condition for the cell problem in homogenization theory.

 \vspace{2mm}
 Finally, we state the hypotheses on the homogenized  dynamics \eqref{sde000}, whose coefficients $\cF$ and $\cG$ are defined in \eqref{bcf}.

\vspace{1mm}
\begin{enumerate}[{\bf (H$_2$)}]
\item
\begin{enumerate}[(a)]
\item For each $y\in\mR^\vartheta$, the homogenized SDE \eqref{sde000} admits a unique weak solution $\bar Y_t(y)$.
\item There exist $\lambda,\Lambda\in\sW_\vartheta$ such that for all $y\in\mR^\vartheta$ and $\xi\in\mR^\vartheta$,
\[
\lambda(y)|\xi|^2\leq \langle \cG(y)\xi,\xi\rangle\leq \Lambda(y)|\xi|^2.
\]
\item There exists a unique invariant probability measure $\bar\mu$ on $\mR^\vartheta$ with finite moments of all orders such that:
for every $\omega_0\in\sW_\vartheta$, there exist $\omega_1\in\sW_\vartheta$ and a decreasing function $\bar\ell\in L^1([0,\infty))$
satisfying, for all $\psi\in\bC^0_{\omega_0}(\mR^\vartheta)$ and all $t>0$,
\begin{align}\label{Exp1}
\big|\mE[\psi(\bar Y_t(y))]-\bar\mu(\psi)\big|
\leq \bar\ell(t)\,\omega_1(y)\,\|\psi\|_{\bC^0_{\omega_0}},\quad y\in\mR^\vartheta.
\end{align}
\end{enumerate}
\end{enumerate}

\noindent
\textit{Discussion of {\bf (H$_2$)}}: Lemma \ref{Le42} below shows that $\cG$ is always positive semidefinite, condition (b) strengthens this to strict positivity. This is a key structural assumption needed to handle unbounded and irregular coefficients in the slow variable. Condition (c) posits that the limiting  dynamics is ergodic with a rather weak mixing rate, which is also natural and essential for the uniformity in time of our main result, as it allows us to control the long-time behavior of $\bar Y_t$. Such  mixing properties are a standard consequence of the ellipticity and appropriate dissipativity structure  of the effective coefficients, see \cite[Examples 1.6 and 1.7]{XXZ26} for  precise and broad sufficient conditions. It is particularly interesting to note that both the positivity and the dissipativity   of the homogenized limit can be either inherited directly from  the original slow dynamics (see Section 5) or transferred entirely  from the mixing of the fast process via the corrector $\Phi$ (see Section 6 for the degenerate Langevin system).

\vspace{2mm}
For brevity, we denote the combined set of hypotheses by
\[
{\bf (H)}:={\bf (H_0)}+{\bf (H_1)}+{\bf (H_2)}.
\]
The main result of this paper is the following uniform-in-time diffusion approximation for the joint law of $(X_t^\eps,Y_t^\eps)$ in system (\ref{SDE1}).

\bt\label{main}
Assume that ${\bf (H)}$ hold. In addition, suppose that for some $\alpha,\beta\in(0,1]$,
\begin{align}\label{Red1}
b,\sigma,H\in\bC^\alpha_{\rm p}\bC^{1+\beta}_{\rm p},
\quad
c,F,G\in\bC^\alpha_{\rm p}\bC^{\beta}_{\rm p}.
\end{align}
Then for any $\gamma\in(\beta,2)$ and $(\rho_0,\omega_0)\in\sW_d\times\sW_\vartheta$, there exist
a weight $\varrho\in\sW_{d+\vartheta}$ such that for all $\varphi\in\bC^\alpha_{\rho_0}\bC^\gamma_{\omega_0}$,
 $\eps\in(0,1)$,  $z=(x,y)\in\mR^d\times\mR^\vartheta$,and every $t> 0$,
\begin{align}\label{XXX}
\big|\mE[\varphi(Z^\eps_t(z))]-\mE[\bar\varphi(\bar Y_t(y))]\big|
\leq \varrho(z)\Big(
\eps^\beta\|\varphi\|_{\bC^\alpha_{\rho_0}\bC^\gamma_{\omega_0}}
+\ell_0(t/\eps^2)\,\|\varphi-\bar\varphi\|_{\bC^0_{\rho_0}\bC^0_{\omega_0}}
\Big),
\end{align}
where $\bar\varphi(y):=\mu_y(\varphi(\cdot,y))$, with $\mu_y$  the invariant measure of the frozen system \eqref{sde2}, $\ell_0(t)$ is given in \eqref{xy}, and $\bar Y_t(y)$ solves the homogenized equation (\ref{sde000}).
\et

\br
\emph{(i) (Convergence of the slow marginal).} If $\varphi(x,y)=\psi(y)$ depends only on the slow variable, then $\bar\varphi=\psi$ and the estimate \eqref{XXX}  simplifies to
\[
\sup_{t\geq0}\big|\mE[\psi(Y_t^\eps(z))]-\mE[\psi(\bar Y_t(y))]\big|
\leq \eps^\beta\,\varrho(z)\,\|\psi\|_{\bC^\gamma_{\omega_0}}.
\]
This provides a uniform-in-time diffusion approximation for the slow component, recovering and extending classical finite-horizon results. Notably, the coefficients can be unbounded and the convergence rate is independent of the regularity of the coefficients in the fast variable.

\emph{(ii) (Convergence of the fast marginal).} If $\varphi(x,y)=\psi(x)$ depends only on the fast variable, then the result yields, for every $t>0$,
\begin{align}\label{XX}
\big|\mE[\psi(X_t^\eps(z))]-\mE[\mu_{\bar Y_t(y)}(\psi)]\big|
\leq\varrho(z)\big(\eps^\beta+\ell_0(t/\eps^2)\big)\,\|\psi\|_{\bC^\alpha_{\rho_0}}.
\end{align}
This implies that the law of $X_t^\eps(z)$ converges weakly to the mixture
$$
\mE[\mu_{\bar Y_t(y)}(\dif x)]=\int_{\mR^\vartheta}\mu_{\tilde y}(\dif x)\cL_{\bar Y_t(y)}(\dif \tilde y)
$$
as $\eps\to 0$.   The  presence of the term $\ell_0(t/\eps^2)$ in (\ref{XX}) is natural and physically meaningful: it captures the rapid relaxation of the fast variable to its local equilibrium $\mu_y$ on the  $O(\eps^{-2})$ time scale, conditioned on the current state of the slow variable, which itself evolves according to the homogenized dynamics. Thus, the limit as well as the convergence rate (\ref{XX}) precisely describe  this conditional equilibration at each macroscopic time $t$.

A standard approximation argument confirms that \eqref{XX} indeed implies weak convergence for any bounded continuous function. Indeed, let $\psi$ be a bounded continuous function on $\mR^d$ and $\psi_n$ be the mollification of $\psi$. Then
\begin{align*}
\big|\mE[\psi(X_t^\eps(z))]-\mE[\mu_{\bar Y_t(y)}(\psi)]\big|
&\leq\big|\mE[\psi_n(X_t^\eps(z))]-\mE[\mu_{\bar Y_t(y)}(\psi_n)]\big|\\
&+\big|\mE[\psi_n(X_t^\eps(z))]-\mE[\psi(X_t^\eps(z))]\big|\\
&+\big|\mE[\mu_{\bar Y_t(y)}(\psi_n)]-\mE[\mu_{\bar Y_t(y)}(\psi)]\big|\\
&=:I^{(1)}_{n,\eps}+I^{(2)}_{n,\eps}+I^{(3)}_{n}.
\end{align*}
For $I^{(1)}_{n,\eps}$, by what we have proved, for fixed $n$,
$$
\lim_{\eps\to0}I^{(1)}_{n,\eps}=0.
$$
For $I^{(2)}_{n,\eps}$, since $\psi$ is continuous, we have for any $R>0$,
$$
\lim_{n\to\infty}\sup_{|x|\leq R}|\psi_n(x)-\psi(x)|=0.
$$
Thus, by Chebyschev's inequality,
\begin{align*}
I^{(2)}_{n,\eps}
&\leq \sup_{|x|\leq R}|\psi_n(x)-\psi(x)|+\|\psi\|_\infty(\mP(|X^\eps_t(z)|>R)+\mP(|X^\eps_t(z)|>R))\\
&\leq \sup_{|x|\leq R}|\psi_n(x)-\psi(x)|+\|\psi\|_\infty(2\,\mE|X^\eps_t(z)|^2/R^2),
\end{align*}
which implies by \eqref{Mom1} that
$$
\lim_{n\to\infty}\sup_{\eps\in(0,1)}I^{(2)}_{n,\eps}=0.
$$
For $I^{(3)}_n$, by the dominated convergence theorem, we clearly have
$$
\lim_{n\to\infty}I^{(3)}_{n}=0.
$$
Combining the above limits, we obtain
$$
\lim_{\eps\to 0}\big|\mE[\psi(X_t^\eps(z))]-\mE[\mu_{\bar Y_t(y)}(\psi)]\big|=0.
$$
\er

\br
If one drops the ergodicity assumption  {\rm (c)} in {\bf (H$_2$)} and replaces the uniform moment bound \eqref{Mom1} by a finite-time moment bound,   the proof below  can be adapted to yield  a
finite-horizon convergence result.  Specifically,  for any fixed $T>0$, there exists a positive constant $C_T$ such that for all $z=(x,y)\in\mR^d\times\mR^\vartheta$ and $t\in[0,T]$,
\[
\big|\mE[\varphi(Z^\eps_t(z))]-\mE[\bar\varphi(\bar Y_t(y))]\big|
\leq C_T\cdot\varrho(z)\Big(
\eps^\beta\|\varphi\|_{\bC^\alpha_{\rho_0}\bC^\gamma_{\omega_0}}
+\ell_0(t/\eps^2)\,\|\varphi-\bar\varphi\|_{\bC^0_{\rho_0}\bC^0_{\omega_0}}
\Big).
\]
\er

\medskip

As a direct and significant consequence of Theorem \ref{main}, we obtain a quantitative resolution to Freidlin's question regarding the asymptotic behavior of the stationary measures of multiscale systems. Namely, suppose that for each $\eps>0$ the original system \eqref{SDE1} admits an invariant probability measure $\nu_\eps(\dif z)=\nu_\eps(\dif x,\dif y)$ such that for every $\varphi\in\bC^\alpha_{\rho_0}\bC^\gamma_{\omega_0}$ and $z\in\mR^d\times\mR^\vartheta$,
\begin{align}\label{inva}
\lim_{t\to\infty}\big|\mE[\varphi(Z_t^{\eps}(z))]-\nu_\eps(\varphi)\big|=0.
\end{align}
We show that $\nu_\eps$ converges weakly to the skew-product $\mu_y(\dif x)\bar\mu(\dif y)$ as $\eps\to0$, where $\bar\mu$ is the invariant measure of the homogenized system \eqref{sde000}.

\bt\label{main1}
Under the assumptions of Theorem~\ref{main} and \eqref{inva},
for each $\gamma\in(\beta,2)$ and $(\rho_0,\omega_0)\in\sW_d\times\sW_\vartheta$,
there exists a $C>0$ such that for every
$\varphi\in\bC^\alpha_{\rho_0}\bC^\gamma_{\omega_0}$ and $\eps\in(0,1)$,
\begin{align}\label{es1}
\bigg|\int_{\mR^{d+\vartheta}}\varphi(x,y)\,\nu_\eps(\dif x,\dif y)
-\int_{\mR^\vartheta}\!\int_{\mR^d}\varphi(x,y)\,\mu_y(\dif x)\,\bar\mu(\dif y)\bigg|
\leq C\eps^{\beta}\|\varphi\|_{\bC^\alpha_{\rho_0}\bC^\gamma_{\omega_0}}.
\end{align}
\et

\br
 (i) The skew-product structure of the limiting stationary measure is a precise realization of the   adiabatic elimination principle in the stationary regime: the fast degrees of freedom have been completely  integrated out, leaving a homogenized dynamics for the slow variables alone, but with statistical information about the fast scale encoded in the conditional measure $\mu_y$.

 (ii) From a theoretical perspective, this result demonstrates the commutativity of the limits $\eps\to0$ and $t\to\infty$ for a large class of observables:
\begin{align}\label{com}
\lim_{\eps\to0}\lim_{t\to\infty}\mE[\varphi(Z^\eps_t(z))] =\lim_{t\to\infty}\lim_{\eps\to0}\mE[\varphi(Z^\eps_t(z))].
 \end{align}
 This property is far from obvious given the complex  nature of the multiscale coupling. It thus validates the use of reduced models for long-time prediction, and provides a firm foundation for multiscale numerical methods, such as the Heterogeneous Multiscale Method (see e.g. \cite{ELV}), when applied to systems in their stationary regime.
\er

\begin{proof}
Fix $\varphi\in\bC^\alpha_{\rho_0}\bC^\gamma_{\omega_0}$ and let
$\bar\varphi(y):=\mu_y(\varphi(\cdot,y))$.
For any $t>0$ and $z=(x,y)\in\mR^d\times\mR^\vartheta$, we write
\begin{align*}
\big|\nu_\eps(\varphi)-\bar\mu(\bar\varphi)\big|
&\leq
\big|\nu_\eps(\varphi)-\mE[\varphi(Z_t^\eps(z))]\big|
+\big|\mE\big[\bar\varphi(\bar Y_t(y))\big]-\bar\mu(\bar\varphi)\big|\\
&\quad +\big|\mE[\varphi(Z_t^\eps(z))]-\mE\big[\bar\varphi(\bar Y_t(y))\big]\big|.
\end{align*}
By \eqref{Exp1} and the uniform  estimate \eqref{XXX}, the second and third terms are bounded by
\[
\bar\ell(t)\omega_1(y)\|\bar\varphi\|_{\bC^0_{\omega_0}}+
\varrho(z)\Big(
\eps^\beta\|\varphi\|_{\bC^\alpha_{\rho_0}\bC^\gamma_{\omega_0}}
+\ell_0(t/\eps^2)\|\varphi-\bar\varphi\|_{\bC^0_{\rho_0}\bC^0_{\omega_0}}
\Big).
\]
Taking the limit $t\to\infty$ in the above estimate and using \eqref{inva}, together with the facts that $\bar\ell(t)\to0$ and $\ell_0(t/\eps^2)\to0$, we obtain
\[
\big|\nu_\eps(\varphi)-\bar\mu(\bar\varphi)\big|\leq \varrho(z)\,\eps^{\beta}\|\varphi\|_{\bC^\alpha_{\rho_0}\bC^\gamma_{\omega_0}}.
\]
This proves \eqref{es1}. Indeed, it suffices to fix once and for all a reference point $z_*=(0,0)$ at the beginning of the argument and then take $C:=\varrho(z_*)$.
\end{proof}

\subsection{Applications}

To further demonstrate the power and scope of our framework, we present several highly non-trivial   applications, deriving explicit limiting dynamics and precise error bounds.

\subsubsection{Uniform-in-time averaging principle  (Section 5)}
In the classical averaging regime, where $c\equiv H\equiv0$ in \eqref{sde0}, our results still provide a significant generalization. In this setting, the homogenization correction naturally vanishes, and the effective coefficients reduce to
\begin{align*}
\cG(y)=\int_{\mR^{d}} GG^*(x,y)\,\mu_y(\dif x),
\quad
\cF(y)=\int_{\mR^{d}} F(x,y)\,\mu_y(\dif x).
\end{align*}
Thus, the ellipticity of $\cG$ and the dissipativity of $\cF$ required in hypothesis {\bf (H$_2$)} follow directly from the corresponding properties of $G$ and $F$ in the original system. We immediately obtain a uniform convergence  for the joint law of the system under general dissipativity and regularity assumptions, thereby superseding standard finite-horizon results that concern only the   convergence of the slow marginal.

\subsubsection{Uniform Smoluchowski--Kramers approximation with state-dependent friction (Section 6)}
A critical application of our joint-law analysis is the derivation of a uniform-in-time S-K limit for the  degenerate Langevin system (\ref{ex11}) with a state-dependent, possibly non-symmetric friction matrix $A(y)$. We overcome the strict degeneracy of the slow equation to prove that the joint position-scaled velocity system converges uniformly to a homogenized manifold with explicit, closed-form expressions for the effective macroscopic dynamics. Critically, the precise description of the joint law limit and the uniformity in time  enable us, for the first time, to rigorously justify the asymptotic limits of key thermodynamic functionals globally in time.  Specifically, we
\begin{itemize}
\item provide an explicit formula for the limiting effective energy, demonstrating a generalized equipartition theorem in which the macroscopic kinetic energy is regulated by the friction matrix through the Lyapunov equation;
\item obtain an explicit limit for the scaled entropy production rate, revealing a universal macroscopic fluctuation-dissipation relation that no longer depends on the non-symmetric  friction and is determined solely by the noise intensity;
\item give a formal asymptotic description of the effective free energy, where the correction is expressed by the Kullback--Leibler divergence between the local equilibrium and a reference Gaussian distribution. This clarifies how information carried by the fast degrees of freedom is renormalized at the macroscopic level.
\end{itemize}

Moreover, the above analysis extends naturally to a broader class of stochastic Hamiltonian systems with multiple time scales, where the fast dynamics retains a gradient-type or symplectic structure; see, e.g., \cite{B18,BW18}.

\subsubsection{Periodic homogenization with distributional drifts  (Section 7)}
Finally, we apply our methodology to a genuinely highly singular regime: periodic homogenization where the fast drift is a distribution. Specifically, we consider the operator
$$
\sL_\eps f(x) = \Delta f(x) + [\eps^{-1}b + c](x/\eps)\cdot\nabla f(x),
$$
where $b \in \bC^\alpha(\mT^d)$ possesses strictly negative H\"older regularity $\alpha \in (-1, -1/2)$ and is divergence-free ($\nabla \cdot b = 0$).  In this deeply singular regime, the product $b\cdot\nabla$ is classically ill-defined via standard paraproducts.
By integrating a periodic Zvonkin-type transformation directly into our multiscale framework, we algebraically eliminate the singularity. We prove that the effective macroscopic drift and diffusion, computed   for the regularized transformed system,  accurately govern the deeply singular original process, yielding an explicit uniform-in-time convergence rate of order $\mathcal{O}(\eps)$. To our knowledge, this is the first uniform-in-time homogenization result for stochastic systems driven by distributional drifts.

\subsection{Overview of the proof and organization of the paper}

The proof strategy is based on a novel decomposition of the joint-law error into three contributions: an effective slow-motion error, a conditional-equilibrium error for the fast component, and an initial-layer boundary term. Uniform-in-time control for the convergence of the slow process and for the deviation of the fast component from its local equilibrium relies on new regularity estimates for parameter-dependent Poisson equations and infinite-horizon Kolmogorov equations in weighted H\"older spaces, together with long-time decay estimates for derivatives of the associated semigroups (Theorems 3.2 and 3.4), which are of independent interest. These ingredients yield uniform fluctuation bounds for  the original multiscale system \eqref{sde0} (Lemmas 4.2 and 4.3).
Moreover, in stark contrast to works that rely exclusively on exponential derivative decay to derive uniform-in-time convergence results in settings such as Euler schemes and averaging limits \cite{BDOZ,Cr-Do-Go-Ot-So,Cr-Do-Ot,MST,PP}, we show that algebraic (polynomial) decay already suffices for a uniform-in-time diffusion approximation.

\medskip
The rest of the paper is organized as follows. In Section 2, we introduce weighted H\"older spaces. Section 3 is devoted to parameter-dependent Poisson and Kolmogorov equations with unbounded coefficients. Section 4 contains the core proof of the main theorems. Sections 5, 6, and 7 present the three applications detailed above.

\medskip

Throughout the paper, $C$ denotes a generic constant whose value may change from line to line. We write $A\lesssim_C B$ to mean $A\leq CB$ for some inessential constant $C>0$.

\section{Preliminaries: Banach-valued weighted H\"older spaces}

This section introduces the functional  framework that supplies our analysis. To handle coefficients with polynomial growth and to quantify regularity in the fast and slow variables separately, we introduce the weighted anisotropic H\"older spaces. We define these spaces, establish their basic properties, and provide the necessary mollification estimates, which will be used for handling low-regularity coefficients in later sections.

Let $\cO\subset\mR^d$ be an open set and $\mB$ a Banach space. For $k\in\mN_0$, we denote by $\bC^{k}(\cO;\mB)$ the space of $k$-times Fr\'echet differentiable $\mB$-valued functions on $\cO$ equipped with the norm
\[
\|f\|_{\bC^{k}(\cO;\mB)}:=\|f\|_{L^\infty(\cO;\mB)}+\cdots+\|\nabla^k f\|_{L^\infty(\cO;\mB)}<\infty.
\]
For $\alpha\in(0,1)$,  the H\"older space $\bC^{k+\alpha}(\cO;\mB)$ is defined by
\[
\|f\|_{\bC^{k+\alpha}(\cO;\mB)}
:=\|f\|_{\bC^{k}(\cO;\mB)}+[\nabla^k f]_{\bC^{\alpha}(\cO;\mB)}<\infty,
\]
where
\[
[\nabla^k f]_{\bC^{\alpha}(\cO;\mB)}
:=\sup_{x,y\in\cO, x\neq y}
\frac{\|\nabla^k f(x)-\nabla^k f(y)\|_\mB}{|x-y|^\alpha}.
\]
We also use $\bC^{k+\alpha}_{\loc}(\cO;\mB)$ to denote the local H\"older space:
\[
\bC^{k+\alpha}_{\loc}(\cO;\mB)
:=\Big\{f:\ f\phi\in \bC^{k+\alpha}(\cO;\mB),\ \forall \phi\in\bC^\infty_c(\cO;\mR)\Big\}.
\]

Recall the class $\sW_d$ in \eqref{sW}. For $\rho\in\sW_d$ and $\alpha\geq 0$, we define the weighted H\"older space
\[
\bC_\rho^\alpha(\mR^d;\mB)
:=\left\{ f\in \bC^\alpha_{\loc}(\mR^d;\mB):\
\|f\|_{\bC^\alpha_\rho(\mR^d;\mB)}
:=\sup_{x\in\mR^d}\frac{\|f\|_{\bC^\alpha(B_1(x);\mB)}}{\rho(x)}<\infty\right\},
\]
where $B_1(x)$ is the Euclidean ball in $\mR^d$ centered at $x$ with radius $1$.
When there is no confusion, we write
\[
\bC_\rho^\alpha(\mB):=\bC_\rho^\alpha(\mR^d;\mB),
\quad
\bC_\rho^\alpha:=\bC_\rho^\alpha(\mR^d;\mR).
\]

A key feature of this localization-based definition is its compatibility with difference operators. For $h\in\mR^d$, define the translation operator $\tau_h f(x):=f(x+h)$ and the first-order difference
\[
\cD_h f(x):=\tau_h f(x)-f(x).
\]
Then, for suitable $f,g$, we have the product rule
\begin{align}\label{Da2}
\cD_h(fg)=\cD_h f\cdot \tau_h g+f\cdot \cD_h g.
\end{align}
The following lemma provides a convenient characterization of the weighted H\"older norm via finite differences and establishes a product estimate.
\bl
Let $\alpha\in(0,1)$ and $k\in\mN_0$. Then
\begin{align}\label{Ho1}
\|f\|_{\bC_\rho^{k+\alpha}(\mB)}
\asymp
\|f\|_{\bC_\rho^{k}(\mB)}
+\sup_{|h|\leq 1}\frac{\|\cD_h\nabla^k f\|_{\bC_\rho^0(\mB)}}{|h|^\alpha}.
\end{align}
Moreover, for any $\rho_1,\rho_2\in\sW_d$, there exists a constant $C>0$ such that for all
$f\in\bC^{k+\alpha}_{\rho_1}(\mB)$ and $g\in\bC^{k+\alpha}_{\rho_2}(\mB)$,
\begin{align}\label{Ho11}
\|fg\|_{\bC^{k+\alpha}_{\rho_1\rho_2}(\mB)}
\leq C\,
\|f\|_{\bC^{k+\alpha}_{\rho_1}(\mB)}\,
\|g\|_{\bC^{k+\alpha}_{\rho_2}(\mB)}.
\end{align}
\el

\begin{proof}
The equivalence \eqref{Ho1}  follows from the standard characterization of H\"older norms via difference quotients, applied to $\nabla^k f$ and combined with the weight normalization over unit balls. The product estimate \eqref{Ho11} is a consequence of the Leibniz rule for $\nabla^k(fg)$, the rule \eqref{Da2} for the difference operator, and the elementary weight bound $\rho_i(x+h)\lesssim \rho_i(x)$ for $|h|\leq 1$ (see \eqref{weight-shift}), where $i=1,2$.
\end{proof}

Due to the low regularity assumptions on the coefficients, we will need some smoothing arguments. Let $\phi\in C^\infty_c(\mR^d)$ be a symmetric  probability density, and set
 \[
\phi_n(x):=n^d\phi(nx),\quad n\in\mN.
\]
For a function $f\in\bC^\alpha_\rho(\mB)$, we define its mollification by
\[
f_n(x):=f*\phi_n(x):=\int_{\mR^d} f(x-y)\phi_n(y)\,\dif y.
\]
 The following lemma quantifies the approximation and smoothing properties of this operation in the weighted setting.

\bl
For $0\leq\alpha\leq\gamma\leq 2$,
there is a constant $C=C(d,\alpha,\gamma,\phi)>0$ such that for all $f\in\bC^\gamma_\rho(\mB)$ and $n\in\mN$,
\begin{align}\label{App1}
\|f_n-f\|_{\bC^\alpha_\rho(\mB)}\leq C n^{\alpha-\gamma}\|f\|_{\bC^\gamma_\rho(\mB)},
\end{align}
and  for all $f\in\bC^\alpha_\rho(\mB)$ and $n\in\mN$,
\begin{align}\label{App2}
\|f_n\|_{\bC^\gamma_\rho(\mB)}\leq C n^{\gamma-\alpha}\|f\|_{\bC^\alpha_\rho(\mB)}.
\end{align}
\el

\begin{proof}
We sketch the main ideas; the presence of the weight $\rho$ is handled by using $\rho(x-y)\lesssim \rho(x)$ for $|y|\leq 1$ and the compact support of $\phi$.

\smallskip
\noindent{\it Step 1: estimate in $\bC^0_\rho(\mB)$.}
By symmetry and the fact that $\int \phi_n=1$, we have
\[
f_n(x)-f(x)=\int_{\mR^d}\big(f(x-y)-f(x)\big)\phi_n(y)\,\dif y.
\]
If $f\in\bC^\gamma_\rho(\mB)$ with $\gamma\in(0,1]$, then
$\|f(x-y)-f(x)\|_\mB\lesssim |y|^\gamma \rho(x)\|f\|_{\bC^\gamma_\rho(\mB)}$, hence
\[
\frac{\|f_n(x)-f(x)\|_\mB}{\rho(x)}
\lesssim \|f\|_{\bC^\gamma_\rho(\mB)}\int |y|^\gamma \phi_n(y)\,\dif y
\lesssim n^{-\gamma}\|f\|_{\bC^\gamma_\rho(\mB)}.
\]
For $\gamma>1$, one expands to order $\lfloor\gamma\rfloor$ and uses the vanishing moments of the symmetric kernel up to order $1$ (or, more generally, Taylor's theorem plus the cancellation coming from $\int y\,\phi_n(y)\,\dif y=0$) to obtain the same scaling $n^{-\gamma}$ in $\bC^0_\rho$.

\smallskip
\noindent{\it Step 2: estimate H\"older seminorms.}
Using the commutation $\cD_h f_n=(\cD_h f)*\phi_n$ and the characterization \eqref{Ho1} (with $k=\lfloor \alpha\rfloor$ when $\alpha\notin\mN$), one reduces \eqref{App1} to the $\bC^0_\rho$-bound applied to $\nabla^k \cD_h f$:
\[
\|\cD_h\nabla^k (f_n-f)\|_{\bC^0_\rho(\mB)}
\lesssim n^{-(\gamma-k)}\|\cD_h\nabla^k f\|_{\bC^{\gamma-k}_\rho(\mB)}
\lesssim n^{-(\gamma-k)}|h|^{\alpha-k}\|f\|_{\bC^\gamma_\rho(\mB)}.
\]
Dividing by $|h|^{\alpha-k}$ and taking $\sup_{|h|\leq 1}$ yields
(\ref{App1}).

\smallskip
\noindent{\it Step 3: smoothing estimate.}
For \eqref{App2}, note that for any integer $m,k\geq 0$,
\[
\nabla^{m+k} f_n =\nabla^k f*\nabla^m\phi_n,
\quad
\|\nabla^m \phi_n\|_{L^1}\lesssim n^{m}.
\]
Thus $\|\nabla^{m+k} f_n\|_{\bC^0_\rho(\mB)}\lesssim n^m\|\nabla^kf\|_{\bC^0_\rho(\mB)}$. For fractional $\gamma$, apply \eqref{Ho1} to $\nabla^{\lfloor\gamma\rfloor} f_n$ and use
\[
\cD_h \nabla^{\lfloor\gamma\rfloor} f_n
= f * \cD_h \nabla^{\lfloor\gamma\rfloor}\phi_n,
\quad
\|\cD_h \nabla^{\lfloor\gamma\rfloor}\phi_n\|_{L^1}
\lesssim (n|h|)^{\gamma-\lfloor\gamma\rfloor}\,n^{\lfloor\gamma\rfloor}.
\]
Combining these bounds gives  (\ref{App2}).
\end{proof}

Our analysis involves functions that depend on both the fast variable $x\in\mR^d$ and the slow variable $\theta\in\mR^\vartheta$, often with different regularity exponents in each direction. To capture this anisotropy, let $\omega\in\sW_\vartheta$ and $\beta\geq 0$. Taking $\mB=\bC^\beta_{\omega}(\mR^\vartheta)$, we consider functions $f(x,\theta)$ belonging to mixed H\"older spaces
\[
\bC^\alpha_{\rho}\big(\mR^d;\bC^\beta_{\omega}(\mR^\vartheta)\big).
\]
By swapping the order of variables and unfolding the definitions (all norms are taken on unit balls), we obtain an equivalent norm:
\begin{align}\label{Equi2}
\|f\|_{\bC^\alpha_{\rho}\big(\mR^d;\bC^\beta_{\omega}(\mR^\vartheta)\big)}
\asymp
\|\tilde f\|_{\bC^\beta_{\omega}\big(\mR^\vartheta;\bC^\alpha_{\rho}(\mR^d)\big)},
\quad \text{where}\quad \tilde f(\theta,x):=f(x,\theta).
\end{align}
For brevity, we write
\[
\bC^{\alpha}_{\rho}\bC^{\beta}_{\omega}
:=\bC^{\alpha}_{\rho}\big(\mR^d; \bC^{\beta}_{\omega}(\mR^\vartheta)\big)
=\bC^{\beta}_{\omega}\big(\mR^\vartheta; \bC^{\alpha}_{\rho}(\mR^d)\big)
=\bC^{\beta}_{\omega}\bC^{\alpha}_{\rho}.
\]
In particular, $\bC^{\alpha}_{\rho}\bC^{\beta}_{\omega}$ can be viewed as an anisotropic H\"older space on $\mR^d\times\mR^\vartheta$.

The following interpolation result is often useful. It shows that a function which is H\"older in each variable separately enjoys a mixed H\"older regularity of any intermediate order.

\bl
Let $\alpha,\beta\in[0,1]$ and $(\rho,\omega)\in\sW_d\times\sW_\vartheta$. If
$f\in \bC^{\alpha}_{\rho}\bC^0_{\omega}\cap\bC^0_{\rho}\bC^{\beta}_{\omega}$, then for any $\theta\in[0,1]$,
\[
f\in \bC^{\theta\alpha}_{\rho}\bC^{(1-\theta)\beta}_{\omega}.
\]
Moreover, the mixed second-order increment satisfies that for all $z\in\mR^d$ with $|z|\leq 1$ and $h\in\mR^\vartheta$ with $|h|\leq 1$,
$$
\|\Delta_{z,h}f\|_{\bC^0_{\rho\omega}} \lesssim |z|^{\theta\alpha}|h|^{(1-\theta)\beta},
$$
where
$$
\Delta_{z,h}f(x,y):=f(x+z,y+h)-f(x+z,y)-f(x,y+h)+f(x,y).
$$
\el

\begin{proof}
Fix $(x,y)\in\mR^d\times\mR^\vartheta$, $z\in\mR^d$, $h\in\mR^\vartheta$ with $|z|\leq 1$ and $|h|\leq 1$. Using the $y$-H\"older regularity (uniformly in $x$) and \eqref{weight-shift}, we obtain
\begin{align*}
|\Delta_{z,h}f(x,y)|
&\leq |f(x+z,y+h)-f(x+z,y)| + |f(x,y+h)-f(x,y)|\\
&\lesssim |h|^\beta \rho(x+z)\omega(y) + |h|^\beta \rho(x)\omega(y)\lesssim |h|^\beta \rho(x)\omega(y).
\end{align*}
Similarly, using the $x$-H\"older regularity (uniformly in $y$), we have
\begin{align*}
|\Delta_{z,h}f(x,y)|
&\leq |f(x+z,y+h)-f(x,y+h)| + |f(x+z,y)-f(x,y)|\\
&\lesssim |z|^\alpha \rho(x)\omega(y+h) + |z|^\alpha \rho(x)\omega(y)\lesssim |z|^\alpha \rho(x)\omega(y),
\end{align*}
Combining these two estimates yields
$$
|\Delta_{z,h}f(x,y)| \lesssim (|z|^\alpha \wedge |h|^\beta)\,\rho(x)\omega(y).
$$
Finally, for any $a,b\geq 0$ and $\theta\in[0,1]$, we have the elementary inequality$\min(a,b)\leq a^\theta b^{1-\theta}$. Applying this  with $a=|z|^\alpha$ and $b=|h|^\beta$ gives
$$
|\Delta_{z,h}f(x,y)| \lesssim |z|^{\theta\alpha}|h|^{(1-\theta)\beta}\rho(x)\omega(y).
$$
This yields the claimed mixed bound.
\end{proof}

\section{Parameter-dependent Poisson and Kolmogorov equations}

This section develops the core analytic tools for our analysis: regularity estimates for Poisson and infinite-horizon Kolmogorov equations whose coefficients depend on a parameter.  We establish two main results: (i) regularity of the solution to the Poisson equation with respect to the parameter (see Theorem \ref{p1}), and (ii) regularity and long-time decay estimates for the parameter-dependent infinite-horizon Kolmogorov equation (see Theorem \ref{cp}). These estimates are of independent interest and may find applications in other problems involving parameter-dependent stochastic dynamics, such as sensitivity analysis, numerical schemes or other limit theorems for multiscale SDEs.

\subsection{Parameterized SDE and standing assumptions}
To develop a unified framework that captures the dependence on the  parameter of the frozen system, we consider a family of SDEs indexed by  a parameter $\theta\in\mR^\vartheta$:
\begin{align}\label{SDE-P}
\dif X^\theta_t=b_\theta(X^\theta_t)\dif t+\sqrt{2}\,\sigma_\theta(X^\theta_t)\dif W_t,\quad X^\theta_0=x\in\mR^d,
\end{align}
where $W_t$ is an $m$-dimensional standard Brownian motion and
\[
(b,\sigma):\mR^\vartheta\times\mR^d\to\big(\mR^d,\mR^d\otimes\mR^m\big)
\]
are measurable functions of $(\theta,x)$.
Throughout this section, we impose the following set of assumptions.

\begin{enumerate}[{\bf (A$^\alpha_\beta$)}]
\item
For some $\alpha\in(0,1)$ and $\beta\in(0,2)$, we have
$
b,\sigma\in \bC^{\beta}_{\rm p}\bC^{\alpha}_{\rm p}=\bC^{\beta}_{\rm p}(\mR^\vartheta;\bC^{\alpha}_{\rm p}(\mR^d)).
$
Moreover,  the following conditions hold:
\begin{enumerate}[(a)]
\item For  any $T>0$ and $\rho_0\in\sW_d$, there exist $(\rho_1,\omega_1)\in\sW_d\times\sW_\vartheta$ such that for all $t\in[0,T]$ and $(x,\theta)\in\mR^d\times\mR^\vartheta$,
\begin{align}\label{Mom9}
\mE\rho_0(X^\theta_t(x))\leq \omega_1(\theta)\rho_1(x).
\end{align}
\item There exist $\lambda,\Lambda\in\sW_{\vartheta+d}$ such that for all $(\theta,x)\in\mR^\vartheta\times\mR^d$,
\begin{equation*}
\lambda_\theta(x)|\xi|^{2} \leq |\sigma^*_\theta(x)\xi|^2 \leq \Lambda_\theta(x) |\xi|^{2},
\quad \forall \,\xi \in \mR^d.
\end{equation*}
\item For each $\theta\in\mR^\vartheta$, the process $X^\theta_t(x)$  admits a unique invariant probability
measure $\mu_\theta$. Furthermore, for any $\rho_0\in \sW_d$, there exist
$(\omega,\rho)\in \sW_\vartheta\times \sW_d$ and a decreasing function $\ell_0$
with $\int^\infty_0 t\ell_0(t)\dif t<\infty$
such that for any $\varphi\in \bC^0_{\rho_0}$,
\begin{align}\label{Erg11}
\|\cT^\theta_t\varphi-\mu_\theta(\varphi)\|_{\bC^0_\rho}
\leq \ell_0(t)\,\omega(\theta)\|\varphi\|_{\bC^0_{\rho_0}},\ \ \forall (t,\theta)\in (1,\infty)\times\mR^\vartheta,
\end{align}
where $
\cT^\theta_t\varphi(x):=\mE[\varphi(X^\theta_t(x))].
$
\end{enumerate}
\end{enumerate}

\vspace{2mm}
 From \eqref{Mom9} and \eqref{Erg11}, we immediately obtain a  bound on the invariant measure: for any $\rho_0\in\sW_d$ there exists $\omega\in\sW_\vartheta$ such that for any $\varphi\in\bC^0_{\rho_0}$,
\begin{align}\label{Mom2}
|\mu_\theta(\varphi)|\leq \omega(\theta)\|\varphi\|_{\bC^0_{\rho_0}}.
\end{align}
The generator of \eqref{SDE-P} is given by
\begin{align*}
\sL_\theta f
=(\sigma_\theta\sigma_\theta^*):\nabla^2_x f+b_\theta\cdot\nabla_x f.
\end{align*}
Since $\mu_\theta$ is invariant for $\cT^\theta$, we have $\sL_\theta^*\mu_\theta=0$ and hence
\begin{equation}\label{InvGen}
\mu_\theta(\sL_\theta\varphi)=0
\quad \text{for all }\varphi\in\bC^2_\rho(\mR^d)\text{ and }\rho\in\sW_d.
\end{equation}

\subsection{Poisson equation on the whole space with a parameter}
In this subsection, we study the following Poisson equation on $\mR^d$:
\begin{align}\label{pde4}
\sL_\theta u_\theta(x)=f_\theta(x)-\mu_\theta(f_\theta)=:\widetilde f_\theta(x),
\end{align}
where $\theta$ is a parameter, and the source term $f$ belongs to an anisotropic H\"older space:
for some $\gamma\geq0$ and $(\rho_0,\omega_0)\in\sW_d\times\sW_\vartheta$,
\begin{align}\label{fff}
f\in \bC_{\omega_0}^{\gamma}\bC_{\rho_0}^{\alpha},
\quad\text{i.e.}\quad
f_\theta(\cdot)\in \bC^\alpha_{\rho_0}\ \text{and}\ \theta\mapsto f_\theta \ \text{is }\gamma\text{-H\"older in }\bC^\alpha_{\rho_0}.
\end{align}
Under assumption  {\bf (A$^\alpha_\beta$)} and by \cite[Theorem 5.1]{XXZ26}, equation \eqref{pde4} admits a unique solution which is given by
\begin{align}\label{pou}
u_\theta(x):=-\int_0^\infty \cT^\theta_t \widetilde f_\theta(x)\,\dif t.
\end{align}
We first recall the following Schauder estimate from \cite[Theorem 5.2]{XXZ26}.

\bl\label{lem:XXZ-u-est}
Assume {\bf (A$^\alpha_\beta$)} and (\ref{fff}) hold. Then there exist weights $(\rho,\omega)\in\sW_d\times\sW_\vartheta$ such that for all $\theta\in\mR^\vartheta$,
\begin{align}\label{u3}
\|u_\theta\|_{\bC^{2+\alpha}_\rho}\leq \omega(\theta)\|f_\theta\|_{\bC^\alpha_{\rho_0}}.
\end{align}
\el

We now prove the main result of this subsection: the regularity of  $\theta\mapsto \mu_\theta(f_\theta)$ and $\theta\mapsto u_\theta$.

\bt\label{p1}
Assume {\bf (A$^\alpha_\beta$)} and (\ref{fff}) hold. For any $(\rho_0,\omega_0)\in\sW_d\times\sW_\vartheta$ and $\gamma\in[0,\beta]$,
there exist weights $\bar\omega\in\sW_\vartheta$ and $(\rho,\omega)\in\sW_d\times\sW_\vartheta$ such that
\begin{align}\label{u12}
\|\mu_\cdot(f_\cdot)\|_{\bC^{\gamma}_{\bar\omega}}\leq
\|f\|_{\bC_{\omega_0}^{\gamma}\bC^{\alpha}_{\rho_0}},
\end{align}
and
\begin{align}\label{u23}
\|u\|_{\bC^{\gamma}_{\omega}\bC^{2+\alpha}_{\rho}}
\leq\|f\|_{\bC_{\omega_0}^{\gamma}\bC^{\alpha}_{\rho_0}}.
\end{align}
\et

\begin{proof}
By interpolation, it suffices to consider $\gamma=\beta$. We split the proof into two cases.

\medskip
\noindent{\it (i) Case 1: $\beta\in(0,1]$.}
Let $\cD^\theta_h$ denote the difference operator in the $\theta$-variable:
$$
\cD^\theta_h g(\theta):=\tau_h^\theta g(\theta)-g(\theta),\quad\tau_h^\theta g(\theta):=g(\theta+h).
$$
Applying $\cD^\theta_h$ to \eqref{pde4} and using the product rule \eqref{Da2} (in the $\theta$-variable) yields
\begin{align}\label{Es23}
\cD^\theta_h\widetilde f_\theta
=\cD^\theta_h(\sL_\theta u_\theta)
=\sL_\theta(\cD^\theta_h u_\theta)+(\cD^\theta_h\sL_\theta)\,\tau^\theta_h u_\theta.
\end{align}
Integrating against $\mu_\theta$ and using \eqref{InvGen} gives
\[
\mu_\theta(\cD^\theta_h\widetilde f_\theta)
=\mu_\theta\big((\cD^\theta_h\sL_\theta)\,\tau^\theta_h u_\theta\big).
\]
Since $\widetilde f_\theta=f_\theta-\mu_\theta(f_\theta)$, we obtain the key identity
\begin{align}\label{Es21}
\cD^\theta_h(\mu_\theta(f_\theta))
=\mu_\theta(\cD^\theta_h f_\theta)-\mu_\theta\big((\cD^\theta_h\sL_\theta)\,\tau^\theta_h u_\theta\big).
\end{align}
Moreover, note that
\begin{align}\label{Mom3}
(\cD^\theta_h\sL_\theta)\,\tau^\theta_h u_\theta
=
\cD^\theta_h(\sigma_\theta\sigma_\theta^*):\nabla^2_x u_{\theta+h}
+(\cD^\theta_h b_\theta)\cdot\nabla_x u_{\theta+h}.
\end{align}
Using $b,\sigma\in\bC^\beta_{\rm p}\bC^\alpha_{\rm p}$, the difference characterization \eqref{Ho1},
and the Schauder estimate \eqref{u3}, we obtain weights $(\rho_1,\omega_1)\in\sW_d\times\sW_\vartheta$ such that for all $|h|\leq 1$ and $\theta\in\mR^\vartheta$,
\[
\|(\cD^\theta_h\sL_\theta)\,\tau^\theta_h u_\theta\|_{\bC^0_{\rho_1}}
\leq  |h|^\beta\,\omega_1(\theta)\,\|f_{\theta+h}\|_{\bC^\alpha_{\rho_0}}.
\]
Inserting this bound into \eqref{Es21} and using \eqref{Mom2}  yields, for some $\bar\omega\in\sW_\vartheta$,
\[
|\cD^\theta_h(\mu_\theta(f_\theta))|
\leq  |h|^\beta\,\bar\omega(\theta)\,\|f\|_{\bC^\beta_{\omega_0}\bC^\alpha_{\rho_0}},
\]
which is exactly \eqref{u12} for $\gamma=\beta\in(0,1]$.

Next, from \eqref{Es23} we rewrite
\begin{align}\label{Es25}
\sL_\theta(\cD^\theta_h u_\theta)=\cD^\theta_h\widetilde f_\theta-(\cD^\theta_h\sL_\theta)\,\tau^\theta_h u_\theta=:g^h_\theta.
\end{align}
By construction and \eqref{InvGen}, $\mu_\theta(g^h_\theta)=0$.
Using again \eqref{Mom3}, \eqref{Ho1}, \eqref{u3}, \eqref{u12} (already proved) and
$b,\sigma\in\bC^{\beta}_{\rm p}\bC^\alpha_{\rm p}$,
we obtain weights $(\rho_g,\omega_g)\in\sW_d\times\sW_\vartheta$ such that
 for all $|h|\leq 1$ and $\theta\in\mR^\vartheta$,
\[
\|g^h_\theta\|_{\bC^\alpha_{\rho_g}}
\leq |h|^\beta\,\omega_g(\theta)\,\|f\|_{\bC^\beta_{\omega_0}\bC^\alpha_{\rho_0}}.
\]
Applying \eqref{u3} to the Poisson equation $\sL_\theta v=g^h_\theta$ (with $\mu_\theta(g^h_\theta)=0$) gives, for some $(\rho,\omega)\in\sW_d\times\sW_\vartheta$,
\[
\|\cD^\theta_h u_\theta\|_{\bC^{2+\alpha}_{\rho}}
\leq |h|^\beta\,\omega(\theta)\,\|f\|_{\bC^\beta_{\omega_0}\bC^\alpha_{\rho_0}},
\]
which yields \eqref{u23} for $\gamma=\beta\in(0,1]$ by the definition of $\bC^\beta_\omega$ in the $\theta$-variable.

\medskip
\noindent{\it (ii) Case 2: $\beta\in(1,2)$.}
Fix $i\in\{1,\dots,\vartheta\}$ and let $e_i$ be the $i$-th coordinate vector. Taking $h=\eps e_i$ in \eqref{Es21}, dividing by $\eps$,
and letting $\eps\to0$ (the limit is justified by the continuity of \(\theta\mapsto u_\theta\) established in Case 1 and dominated convergence), we obtain
\[
\p_{\theta_i}\big[\mu_\theta(f_\theta)\big]
=\mu_\theta(\p_{\theta_i}f_\theta)-\mu_\theta\big((\p_{\theta_i}\sL_\theta)u_\theta\big)
=:\mu_\theta(g_\theta),
\]
where
\[
g_\theta:=\p_{\theta_i}f_\theta-(\p_{\theta_i}\sL_\theta)u_\theta.
\]
Using $b,\sigma\in\bC^\beta_{\rm p}\bC^\alpha_{\rm p}$ and \eqref{u3}, we infer that
$g\in \bC^{\beta-1}_{\omega_g}\bC^\alpha_{\rho_g}$ for some weights $(\rho_g,\omega_g)$, with
\[
\|g\|_{\bC^{\beta-1}_{\omega_g}\bC^\alpha_{\rho_g}}\leq \|f\|_{\bC^\beta_{\omega_0}\bC^\alpha_{\rho_0}}.
\]
Applying Case~1 to the map $\theta\mapsto \mu_\theta(g_\theta)$ (now with exponent $\beta-1\in(0,1]$) yields the
$\bC^{\beta-1}$-regularity of $\p_{\theta_i}[\mu_\theta(f_\theta)]$, hence $\mu_\cdot(f_\cdot)\in \bC^\beta$ and \eqref{u12} follows.

For \eqref{u23}, as above, differentiate \eqref{Es25} in $\theta_i$ to get
\[
\sL_\theta(\p_{\theta_i}u_\theta)
=\p_{\theta_i}\widetilde f_\theta-(\p_{\theta_i}\sL_\theta)u_\theta=:g_\theta,
\quad \mu_\theta(g_\theta)=0,
\]
and then repeat the difference-quotient argument of Case~1 with $\p_{\theta_i}u_\theta$ in place of $u_\theta$.
This gives $\p_{\theta_i}u\in \bC^{\beta-1}_\omega\bC^{2+\alpha}_\rho$, and therefore $u\in\bC^\beta_\omega\bC^{2+\alpha}_\rho$, establishing \eqref{u23}.
\end{proof}

\begin{remark}
Theorem \ref{p1} provides a systematic regularity transfer mechanism: from the known regularity of the Poisson solution $u_\theta$ in the  $x$-variable (as provided by Lemma \ref{lem:XXZ-u-est}), we deduce its regularity with respect to the parameter $\theta$, as well as the regularity of the averaged quantity $\mu_\theta(f_\theta)$. Beyond the present multiscale context, such parameter-dependent estimates are valuable in sensitivity analysis for SDEs (e.g., computing derivatives of invariant measures with respect to parameters) and in the numerical analysis of multiscale methods, where the corrector must be approximated as a function of the slow variable.
\end{remark}

\subsection{Infinite-horizon Kolmogorov equation with a parameter}
We now turn to study the regularity of $\cT^\theta_t\varphi_\theta(x)$ with respect to parameter $\theta$.
Note that the function $u_\theta(t,x):=\cT^\theta_t\varphi_\theta(x)$ solves the following Kolmogorov equation on $[0,\infty)\times\mR^d$:
\begin{equation}\label{eqV}
\p_t u_\theta(t,x)=\sL_\theta u_\theta(t,x),\quad u_\theta(0,x)=\varphi_\theta(x).
\end{equation}
We first recall the following fundamental estimate from \cite[Theorems 1.5 and 4.11]{XXZ26}, which quantifies the decay of the semigroup in Schauder estimate.

\bt
Under {\bf (A$^\alpha_\beta$)}, for any
$\gamma\in[0,\alpha]$, $\delta\in(\gamma,1]$ and $\rho_0\in\sW_d$, there exist
$(\rho_1,\omega_1)\in\sW_d\times\sW_\vartheta$  and $\ell$ with $\int_0^\infty(1\vee t)\ell(t)\dif t<\infty$
such that for all
$\varphi\in\bC^\delta_{\rho_0}(\mR^d)$,
\begin{align}\label{Erg121}
\|\cT^\theta_t\varphi-\mu(\varphi)\|_{\bC^{2+\gamma}_{\rho_1}}
\leq \ell(t)\,\omega_1(\theta)\,\|\varphi\|_{\bC^\delta_{\rho_0}},
\quad t>0,\ \theta\in\mR^\vartheta.
\end{align}
\et

We now prove the $\theta$-regularity for $\cT^\theta_t\varphi_\theta$ when $\beta\in(0,1]$
and $\varphi\in\bC^\beta_{\omega_0}\bC^\alpha_{\rho_0}$, which will be crucial in Section 4 for proving the convergence of the joint law.

\bt\label{cp}
Assume {\bf (A$^\alpha_\beta$)} holds with $\beta\in(0,1]$.
For any $\gamma\in[0,\alpha]$, $\delta\in(\gamma,1]$ and $(\rho_0,\omega_0)\in\sW_d\times\sW_\vartheta$, there
exist weights $(\rho,\omega)\in\sW_d\times\sW_\vartheta$ such that for all $\varphi\in\bC^\beta_{\omega_0}\bC^\delta_{\rho_0}$ and all $t>0$,
\begin{align}\label{cp0}
\|\cT^\cdot_t\widetilde\varphi_\cdot\|_{\bC^\beta_\omega\bC^0_\rho}
&\leq \|\varphi\|_{\bC^\beta_{\omega_0}\bC^\delta_{\rho_0}},
\end{align}
where $\widetilde\varphi_\theta(x):=\varphi_\theta(x)-\mu_\theta(\varphi_\theta)$, and
\begin{align} \label{cp1}
\|\cT^\cdot_t\widetilde\varphi_\cdot\|_{\bC^\beta_\omega\bC^{2+\gamma}_\rho}
&\leq\Big(\ell(t/2)+\int_t^\infty \ell(s)\,\dif s\Big)\,
\|\varphi\|_{\bC^\beta_{\omega_0}\bC^\delta_{\rho_0}},
\end{align}
where $\ell$ is the same as in (\ref{Erg121}).
\et

\begin{proof}
In the following, for simplicity of notation,
we always use $(\rho,\omega)\in\sW_d\times\sW_\vartheta$ to denote generic polynomial weights,
which may change from line to line.

For $\varphi\in\bC^\beta_{\omega_0}\bC^\delta_{\rho_0}$, set
$u_\theta(t,x):=\cT^\theta_t\widetilde\varphi_\theta(x).$
Using \eqref{Erg121} with $\varphi=\varphi_{\theta}$, we obtain that for some $(\rho,\omega)\in\sW_d\times\sW_\vartheta$,
\[
\|u_{\theta}(t)\|_{\bC^{2+\gamma}_{\rho}}
\leq\ell(t)\,\omega(\theta)\,\|\varphi_{\theta}\|_{\bC^\delta_{\rho_0}},\ \ t>0,\ \theta\in\mR^\vartheta.
\]
Applying $\cD_h^\theta$ to \eqref{eqV}, we have
\[
\p_t \cD^\theta_hu_\theta-\sL_\theta \cD^\theta_h u_\theta=(\cD^\theta_h\sL_\theta)\,\tau^\theta_h u_\theta=:f^h_\theta.
\]
Note that
\[
(\cD^\theta_h\sL_\theta)\psi
=\cD^\theta_h(\sigma_\theta\sigma_\theta^*):\nabla_x^2\psi
+(\cD^\theta_h b_\theta)\cdot\nabla_x\psi.
\]
By $b,\sigma\in\bC^\beta_{\rm p}\bC^\alpha_{\rm p}$, the product rule,
 and shrinking/enlarging weights if needed,
there exist weights $(\rho,\omega)\in\sW_d\times\sW_\vartheta$
such that for all $|h|\leq 1$, $t>0$ and $\theta\in\mR^\vartheta$,
\begin{align}\label{FF0b}
\|f^h_\theta(t)\|_{\bC^{\alpha}_\rho}
\leq|h|^\beta\,\omega(\theta)\,\ell(t)\,\|\varphi\|_{\bC^0_{\omega_0}\bC^\delta_{\rho_0}}.
\end{align}
By Duhamel's formula,
\begin{align*}
\cD^\theta_hu_\theta(t)&=\cT^\theta_t(\cD^\theta_h\widetilde\varphi_\theta)+\int_0^t\cT^\theta_{t-s}f^h_\theta(s)\,\dif s\\
&=: \sI_1(t)+\sI_2(t)+\sI_3(t),
\end{align*}
where
\begin{align*}
\sI_1(t)&:=\cT^\theta_t(\cD^\theta_h\widetilde\varphi_\theta)-\mu_\theta(\cD^\theta_h\widetilde\varphi_\theta),\\
\sI_2(t)&:=\int^t_0\Big(\cT^\theta_{t-s}f^h_\theta(s)-\mu_\theta(f^h_\theta(s))\Big)\dif s,\\
\sI_3(t)&:=\mu_\theta(\cD^\theta_h\widetilde\varphi_\theta)+\int^t_0\mu_\theta(f^h_\theta(s))\dif s.
\end{align*}
{\it (i) Estimate of $\sI_1(t)$.}
From \eqref{Erg11} and \eqref{u12},
\[
\|\sI_1(t)\|_{\bC^0_{\rho_1}}
\leq \omega(\theta)\,\|\cD^\theta_h\widetilde\varphi_\theta\|_{\bC^0_{\rho_0}}
\leq |h|^\beta\,\omega(\theta)\,\|\varphi\|_{\bC^\beta_{\omega_0}\bC^\delta_{\rho_0}}.
\]
Similarly, by \eqref{Erg121}  and \eqref{u12},
\[
\|\sI_1(t)\|_{\bC^{2+\gamma}_{\rho_1}}
\leq \ell(t)\,\omega(\theta)\,\|\cD^\theta_h\widetilde\varphi_\theta\|_{\bC^\delta_{\rho_0}}
\leq |h|^\beta\,\ell(t)\,\omega(\theta)\,\|\varphi\|_{\bC^\beta_{\omega_0}\bC^\delta_{\rho_0}}.
\]

\medskip\noindent{\it (ii) Estimate of $\sI_2(t)$.}
Using \eqref{Erg11} and \eqref{FF0b},
\begin{align*}
\|\sI_2(t)\|_{\bC^0_{\rho_1}}
&\leq \int_0^t\ell(t-s) \omega(\theta)\,\|f^h_\theta(s)\|_{\bC^0_\rho}\,\dif s\\
&\leq |h|^\beta\,\omega(\theta)\,\|\varphi\|_{\bC^\beta_{\omega_0}\bC^\delta_{\rho_0}} \int_0^\infty \ell(s)\,\dif s.
\end{align*}
Similarly, by \eqref{Erg121},
\[
\|\sI_2(t)\|_{\bC^{2+\gamma}_{\rho_1}}
\leq \int_0^t \ell(t-s)\,\omega(\theta)\,\|f^h_\theta(s)\|_{\bC^\delta_\rho}\,\dif s
\leq |h|^\beta\,\omega(\theta)\,\|\varphi\|_{\bC^\beta_{\omega_0}\bC^\delta_{\rho_0}}
\int_0^t \ell(t-s)\ell(s)\,\dif s.
\]
Since $\ell$ is non-increasing,
\[
\int_0^t \ell(t-s)\ell(s)\,\dif s=\left(\int^{t/2}_0+\int^t_{t/2}\right)\ell(t-s)\ell(s)\,\dif s
\leq 2\,\ell(t/2)\int_0^\infty \ell(s)\,\dif s,
\]
and hence
\[
\|\sI_2(t)\|_{\bC^{2+\gamma}_{\rho_1}}
\leq |h|^\beta\,\omega(\theta)\,\ell(t/2)\,\|\varphi\|_{\bC^\beta_{\omega_0}\bC^\delta_{\rho_0}}.
\]
\medskip\noindent{\it (iii) Estimate of $\sI_3(t)$.}
Using the Poisson representation \eqref{pou} with $f=\widetilde\varphi_{\theta}$, i.e.,
$$
\sL_\theta U_\theta=-\widetilde\varphi_\theta\quad \text{with} \quad U_\theta=\int_0^\infty \cT^\theta_s\widetilde\varphi_\theta\,\dif s,
 $$
 and the identity (which is justified by \eqref{Erg11})
\[
\int_0^\infty \sL_\theta \cT^\theta_s \widetilde\varphi_\theta\,\dif s
=\lim_{T\to\infty}\Big(\cT^\theta_T\widetilde\varphi_\theta-\widetilde\varphi_\theta\Big)
=-\widetilde\varphi_\theta
\]
we obtain
\[
\mu_\theta(\cD^\theta_h\widetilde\varphi_\theta)
=-\int_0^\infty \mu_\theta\big(\cD^\theta_h(\sL_\theta u_\theta(s))\big)\,\dif s
=-\int_0^\infty \mu_\theta\big(f^h_\theta(s)\big)\,\dif s,
\]
where we used $\mu_\theta(\sL_\theta(\cD^\theta_h u_\theta(s)))=0$ and $(\cD^\theta_h\sL_\theta)\tau^\theta_h u_\theta(s)=f^h_\theta(s)$. Consequently,
$$
\sI_3(t)=-\int_t^\infty \mu_\theta\big(f^h_\theta(s)\big)\,\dif s,
$$
and therefore, by \eqref{FF0b},
\[
|\sI_3(t)|
\leq \int_t^\infty |\mu_\theta(f^h_\theta(s))|\,\dif s
\leq |h|^\beta\,\omega(\theta)\,\|\varphi\|_{\bC^\beta_{\omega_0}\bC^\delta_{\rho_0}}
\int_t^\infty \ell(s)\,\dif s.
\]

Collecting the bounds for $\sI_1,\sI_2,\sI_3$ and dividing by $|h|^\beta$ gives that for
some $\omega\in\sW_\vartheta$ and all $\theta\in\mR^\vartheta$ and $t>0$,
\[
\sup_{|h|\leq 1}\|\cD^\theta_h u_\theta(t)\|_{\bC^0_{\rho_1}}/|h|^\beta
\leq \omega(\theta)\|\varphi\|_{\bC^\beta_{\omega_0}\bC^\delta_{\rho_0}},
\]
and
\[
\sup_{|h|\leq 1}\|\cD^\theta_h u_\theta(t)\|_{\bC^{2+\gamma}_{\rho_1}}/|h|^\beta
\leq \omega(\theta)\Big(\ell(t/2)+\int_t^\infty \ell(s)\,\dif s\Big)\|\varphi\|_{\bC^\beta_{\omega_0}\bC^\delta_{\rho_0}}.
\]
These are exactly \eqref{cp0}--\eqref{cp1} by the characterization \eqref{Ho1} in the $\theta$-variable.
\end{proof}
\br\label{Re35}
Since $\int^\infty_0(1\vee t)\ell(t)\dif t<\infty$, by integration by parts it is easy to see that
$$
\int^\infty_0\left(\ell(t/2)+\int_t^\infty \ell(s)\,\dif s\right)\dif t<\infty.
$$
This integrability will be crucial in Section 4 for controlling the relaxation of the fast variable to its conditional equilibrium.
\er

	\section{Uniform-in-time diffusion approximation for multi-scale systems}

This section is devoted to the proof of our main result, Theorem \ref{main}. The argument proceeds in two stages. First, we establish a uniform-in-time diffusion approximation for the slow variable $Y_t^\eps$ alone (see Theorem \ref{thm:slow-variable}).
This step relies on regularity estimates  for parameter-dependent Poisson equations in the fast variable  developed in Subsection 3.2, together with  long-time decay estimates for the semigroup corresponding to the homogenized equation. Second, we extend this convergence to the joint process
$Z_t^\eps=(X_t^\eps,Y_t^\eps)$ by a novel decomposition, and carefully handle the initial layer--in which the fast variable relaxes to its local equilibrium--using the infinite-horizon parameter-dependent Kolmogorov equation estimates from Subsection 3.3.

\medskip

Recall the compact form \eqref{SDE1}  of the multiscale system, and we always write $z=(x,y)\in\mR^d\times\mR^\vartheta$.
For a function $\psi:[0,\infty)\times\mR^d\times\mR^\vartheta\to\mR$ that is $C^1$ in time and $C^2$ in space, and satisfies appropriate growth conditions, It\^o's formula yields
\begin{align}\label{Ito1}
\mE\,\psi(t, Z^\eps_t)=\psi(0, z)+\int^t_0\mE\Big[(\p_s\psi+\sL_\eps\psi)(s,Z^\eps_s)\Big]\dif s,
\end{align}
where
\[
\sL_\eps\psi
:=
(B_\eps+\eps^{-1}D)\cdot\nabla_z\psi+\Theta_\eps\Theta^*_\eps:\nabla^2_z\psi
\]
is the generator of the joint process. Expanding the coefficients according to system \eqref{sde0}, we decompose
\begin{align}\label{Dz1}
\sL_\eps
=\eps^{-1}\sL_0+\eps^{-2}\sL_1+\sL_2,
\end{align}
with
\begin{align*}
\sL_0\psi&:=D\cdot\nabla_z\psi+2(\sigma G^*):\nabla_x\nabla_y\psi,\\
\sL_1\psi&:=(\sigma\sigma^*):\nabla_x^2\psi+b\cdot\nabla_x\psi,\\
\sL_2\psi&:=(GG^*):\nabla_y^2\psi+F\cdot\nabla_y\psi.
\end{align*}
We emphasize that $\sL_1$ acts only on the fast variable $x$, whereas $\sL_2$ acts only on the slow variable $y$.

\medskip
Throughout the remainder of this section, we adopt the convention that $(\rho,\omega)\in\sW_d\times\sW_\vartheta$ denote generic polynomial weights whose precise values may change from line to line. This convention applies to all lemmas and proofs that follow.

\subsection{Convergence of the slow variable}

The main result of this subsection is the following uniform-in-time diffusion approximation for the slow component.

\bt\label{thm:slow-variable}
Under the assumptions of Theorem~\ref{main},  for any $\omega_0\in \sW_\vartheta$ and any $\gamma\in(\beta,1]$,
there exists a weight $\varrho\in\sW_{d+\vartheta}$ such that for every
$\varphi\in\bC^\gamma_{\omega_0}(\mR^\vartheta)$,   $\eps\in(0,1)$, and $z=(x,y)\in\mR^d\times\mR^\vartheta$,
\[
\sup_{t\geq 0}\Big|\mE\varphi(Y^\eps_t(z))-\mE\varphi(\bar Y_t(y))\Big|
\leq \varrho(z)\eps^\beta\|\varphi\|_{\bC^\gamma_{\omega_0}},
\]
where $\bar Y_t(y)$ solves the homogenized limit (\ref{sde000}).
\et

In the following, we always work under the assumptions of Theorem~\ref{main}.

\subsubsection{Estimates for the homogenized coefficients and semigroup}
We first prepare two preparatory results concerning the homogenized system (\ref{sde000}). The following lemma establishes the basic regularity of the corrector and the effective coefficients.

\bl\label{Le42}
Let $\cF,\cG$ and $\Gamma_1,\Gamma_2$ be defined by \eqref{bcf}-\eqref{bcf0}, respectively.
Then there exist weights $(\rho,\omega)\in\sW_d\times\sW_\vartheta$ such that
\begin{align*}
\Gamma_1,\Gamma_2\in\bC^\alpha_\rho\bC^\beta_\omega,
\quad
\cF,\cG\in \bC^\beta_\omega(\mR^\vartheta),
\end{align*}
and $\cG(y)$ is symmetric and positive semidefinite for every $y\in\mR^\vartheta$.
\el

\begin{proof}
\noindent
{\it (i) regularity of $\Gamma_1,\Gamma_2$.}
Recall that $\Phi$ solves the Poisson equation \eqref{pde10}.
By Theorem \ref{p1} and the equivalence \eqref{Equi2}, we have $\Phi\in\bC^{2+\alpha}_\rho\bC^{1+\beta}_\omega$
for some $(\rho,\omega)\in\sW_d\times\sW_\vartheta$.
By definition,
\[
\Gamma_1=D\cdot\nabla_z\Phi+2\sigma G^*:\nabla_x\nabla_y\Phi,
\quad
\Gamma_2=H\otimes\Phi+2G\sigma^*\cdot\nabla_x\Phi.
\]
Using the assumptions on $(b,\sigma,H,c,F,G)$, the product rule, and the closure properties of polynomial weights in $\sW$
(cf.\ \eqref{Da49}), we can choose $(\rho,\omega)\in\sW_d\times\sW_\vartheta$ such that
\(
\Gamma_1,\Gamma_2\in\bC^\alpha_\rho\bC^\beta_\omega.
\)

\smallskip
\noindent
{\it (ii) regularity of $\cF,\cG$.}
By definition \eqref{bcf},
\[
\cF(y)=\mu_y\big((F+\Gamma_1)(\cdot,y)\big),
\quad
\cG(y)=\mu_y\Big(\big(GG^*+(\Gamma_2+\Gamma_2^*)/2\big)(\cdot,y)\Big).
\]
By the mapping property encoded in \eqref{u12}, the regularity of $F,\Gamma_1,GG^*,\Gamma_2$ implies
$\cF,\cG\in\bC^\beta_\omega(\mR^\vartheta)$ for some $\omega\in\sW_\vartheta$.

\smallskip
\noindent
{\it (iii) symmetry and positivity of $\cG$.}
Symmetry is immediate. To show positivity, fix $\xi\in\mR^\vartheta$ and define
\[
\Phi_\xi(x,y):=\langle \Phi(x,y),\xi\rangle,\quad
G_\xi(x,y):=G(x,y)^*\xi,\quad
H_\xi(x,y):=\langle H(x,y),\xi\rangle.
\]
From the Poisson equation \eqref{pde10}, we have $\sL_1\Phi_\xi=-H_\xi$.
A direct computation gives
\[
\sL_1(\Phi_\xi^2)
=
2\Phi_\xi\,\sL_1\Phi_\xi+2|\sigma^*\nabla_x\Phi_\xi|^2
=
-2H_\xi\Phi_\xi+2|\sigma^*\nabla_x\Phi_\xi|^2.
\]
Integrating against $\mu_y$ and using $\sL_1^*\mu_y=0$ yields
\[
0=\mu_y(\sL_1(\Phi_\xi^2))
=
-2\mu_y(H_\xi\Phi_\xi)+2\mu_y(|\sigma^*\nabla_x\Phi_\xi|^2),
\]
hence
\begin{align}\label{KeyIdentityMu}
\mu_y(H_\xi\Phi_\xi)=\mu_y(|\sigma^*\nabla_x\Phi_\xi|^2)\geq0.
\end{align}
Now compute $\langle \xi,\cG(y)\xi\rangle$ using \eqref{bcf}--\eqref{bcf0}:
\[
\langle \xi,\cG(y)\xi\rangle
=
\mu_y(|G_\xi|^2)+\mu_y(H_\xi\Phi_\xi)+2\mu_y(\langle G_\xi,\sigma^*\nabla_x\Phi_\xi\rangle).
\]
Combining with \eqref{KeyIdentityMu} we obtain
\begin{align*}
\langle \xi,\cG(y)\xi\rangle
&=
\mu_y(|G_\xi|^2)+\mu_y(|\sigma^*\nabla_x\Phi_\xi|^2)+2\mu_y(\langle G_\xi,\sigma^*\nabla_x\Phi_\xi\rangle)\\
&=
\mu_y(|G_\xi+\sigma^*\nabla_x\Phi_\xi|^2)\geq0.
\end{align*}
Therefore $\cG(y)$ is positive semidefinite.
\end{proof}

The next lemma, which is a direct consequence of \cite[Corollary~4.10]{XXZ26}, quantifies the smoothing effect of the homogenized semigroup
\begin{align}\label{bart}
\bar \cT_t\varphi(y):=\mE\varphi(\bar Y_t(y)),\quad y\in \mR^\vartheta.
\end{align}
 Especially, the long-time estimates for the first and second derivative will be crucial  for our uniform-in-time analysis.

\bl
For any $\gamma\in(\beta,2)$ and $\omega_0\in\sW_\vartheta$,
there exist a decreasing function $\ell\in L^1(\mR_+)$ and weights $\omega_1,\omega_2\in\sW_\vartheta$ such that for all $t>0$,
\begin{align}\label{ny3}
\|\bar \cT_t\varphi\|_{\bC^{\beta}_{\omega_1}}
\leq \|\varphi\|_{\bC^\beta_{\omega_0}},
\quad
\|\nabla_y\bar \cT_t\varphi\|_{\bC^{\beta}_{\omega_2}}+\|\nabla^2_y\bar \cT_t\varphi\|_{\bC^{\beta}_{\omega_2}}
\leq \ell(t)\,\|\varphi\|_{\bC^\gamma_{\omega_0}}.
\end{align}
\el
\br
If $\gamma=\beta$, then the function $\ell(t)$ behaves like $t^{-1}$ as $t\to0$ and is therefore not integrable near zero; this explains the need for the strict inequality $\gamma>\beta$ in our main results.
\er

\subsubsection{A key uniform-in-time fluctuation estimate}

The following lemma provides a uniform bound for integrals involving centered fluctuations of the multiscale system. This is the core technical estimate that allows us to control the error in the diffusion approximation.

\bl\label{Le44}
For $(\rho_1,\omega_1),(\rho_2,\omega_2)\in\sW_d\times\sW_\vartheta$,
let $Q_1\in \bC^{\alpha}_{\rho_1}\bC^{\beta}_{\omega_1}$ and $Q_2\in \bC^{\alpha}_{\rho_2}\bC^{\beta}_{\omega_2}$
be $\mR^\vartheta$-valued and $\mR^\vartheta\otimes\mR^\vartheta$-valued functions on $\mR^d\times\mR^\vartheta$, respectively, satisfying  the centering condition
\begin{align}\label{Dk3}
\mu_y(Q_j(\cdot,y))=0,\quad \forall\,y\in\mR^\vartheta,\ \ j=1,2.
\end{align}
For any $\omega_0\in\sW_\vartheta$ and $\gamma\in(\beta,2)$,
there exist weights $\varrho_1,\varrho_2\in\sW_{d+\vartheta}$ such that for all $\eps\in(0,1)$, all $z\in\mR^{d+\vartheta}$,
and all $\varphi\in\bC^\gamma_{\omega_0}$,
\begin{align}\label{DQ41}
\sup_{t\geq 0}\left|\mE\int^t_0(Q_j:\nabla^j_y\bar\cT_{t-s}\varphi)(Z^\eps_s(z))\dif s\right|
\leq \eps^\beta\varrho_j(z)\|\varphi\|_{\bC^\gamma_{\omega_0}},\quad j=1,2,
\end{align}
where $:$ denotes the natural contraction between vectors (for $j=1$) or matrices (for $j=2$), and $\bar\cT_{t}\varphi$ is given by \eqref{bart}.
\el

\begin{proof}
We provide the proof of \eqref{DQ41} for $j=2$. The case $j=1$ is completely analogous and in fact  simpler.

\medskip
\noindent
{\it Step 1: the Poisson corrector and mollification.}
Let $U:\mR^d\times\mR^\vartheta\to\mR^\vartheta\otimes\mR^\vartheta$ solve the Poisson equation (in $x$ with parameter $y$)
\begin{align}\label{Poi1}
\sL_1 U = Q_2.
\end{align}
By \eqref{Dk3}, \eqref{Equi2} and Theorem~\ref{p1}, there exist weights $(\rho,\omega)\in\sW_d\times\sW_\vartheta$ such that
\begin{align}\label{UU8}
\|U\|_{\bC^{2+\alpha}_{\rho}\bC^{\beta}_{\omega}}
\leq
\|Q_2\|_{\bC^{\alpha}_{\rho_2}\bC^\beta_{\omega_2}}.
\end{align}
Fix $\varphi\in\bC^\gamma_{\omega_0}$ and $t>0$.
For $s\in(0,t)$ define
\[
f(s,z):=U(z):\nabla_y^2 \bar\cT_{t-s}\varphi(y),\quad z=(x,y).
\]
To justify the application of It\^o's formula (and the appearance of $\nabla_y^3$-terms below), we mollify in the $y$-variable.
Let $U_n$ be the mollification of $U$ in $y$ and similarly for $\nabla_y^2\bar\cT_{t-s}\varphi$.
We set
\[
f_n(s,z):=U_n(z):(\nabla_y^2 \bar\cT_{t-s}\varphi)_n(y).
\]
Since $\p_r\bar\cT_r\varphi=\bar\sL\,\bar\cT_r\varphi$, we have
\[
\p_s(\nabla_y^2\bar\cT_{t-s}\varphi)
=
-\nabla_y^2\bar\sL\,\bar\cT_{t-s}\varphi,
\]
and therefore
\[
\p_s f_n(s,z)
=-
U_n(z):(\nabla_y^2\bar\sL\,\bar\cT_{t-s}\varphi)_n(y).
\]

\noindent
{\it Step 2: applying It\^o and isolating the main term.}
Applying  \eqref{Ito1} to $\psi=f_n$ and using the generator decomposition \eqref{Dz1} we get
\begin{align}\label{JH1}
\mE f_n(t, Z^\eps_{t}(z))
&=f_n(0, z)-\mE\int^{t}_0\Big[U_n:(\nabla_y^2\bar\sL \bar\cT_{t-s}\varphi)_n\Big](Z^\eps_s(z))\dif s \no\\
&\quad+\mE\int^{t}_0(\eps^{-1}\sL_0+\eps^{-2}\sL_1+\sL_2)f_n(s, Z^\eps_s(z))\dif s.
\end{align}
Note that $\sL_1$ acts only in $x$, hence
\[
\sL_1 f(s,z)
=
(\sL_1U)(z):\nabla_y^2\bar\cT_{t-s}\varphi(y)
=
Q_2(z):\nabla_y^2\bar\cT_{t-s}\varphi(y),
\]
where we used \eqref{Poi1}.
Multiply \eqref{JH1} by $\eps^2$ and rearrange terms gives
\begin{align*}
\mE\int_0^{t}\big(Q_2:\nabla_y^2 \bar\cT_{t-s}\varphi\big)(Z_s^\eps(z))\dif s
&=
\eps^2\Big(\mE f_n(t, Z^\eps_{t}(z))-f_n(0, z)\Big)\\
&+\eps^2\mE\int^{t}_0\Big[U_n:(\nabla_y^2\bar\sL \bar\cT_{t-s}\varphi)_n\Big](Z^\eps_s(z))\dif s\\
&-\mE\int^{t}_0(\eps \sL_0+\eps^2\sL_2)f_n(s, Z^\eps_s(z))\dif s\\
&+\mE\int^{t}_0\big(\sL_1f-\sL_1f_n\big)(s, Z^\eps_s(z))\dif s\\
&=:\cI^{\eps,n}_1(t)+\cI^{\eps,n}_2(t)+\cI^{\eps,n}_3(t)+\cI^{\eps,n}_4(t).
\end{align*}
Below, we bound each $\cI^{\eps,n}_k(t)$ uniformly in $t$.

\medskip
\noindent
{\it Step 3: estimate of $\cI^{\eps,n}_1(t)$ (boundary terms).}
By the mollifier estimate \eqref{App2}, \eqref{UU8} and \eqref{ny3}, there exists $\varrho\in\sW_{d+\vartheta}$
and $\omega\in\sW_\vartheta$ such that
\[
|U_n(z)|\lesssim \varrho(z),
\quad
|\nabla_y^2\psi_n(y)|\lesssim n^{2-\beta}\omega(y)\|\psi\|_{\bC^\beta_{\omega_0}}
\quad (\psi=\varphi\ \text{or}\ \psi=\bar\cT_t\varphi).
\]
Consequently,
\[
|f_n(t,z)|=|U_n(z):(\nabla_y^2\varphi)_n(y)|
\leq n^{2-\beta}\varrho(z)\|\varphi\|_{\bC^\beta_{\omega_0}},
\]
and
\[
|f_n(0,z)|
=|U_n(z):(\nabla_y^2\bar\cT_t\varphi)_n(y)|
\leq n^{2-\beta}\varrho(z)\|\varphi\|_{\bC^\beta_{\omega_0}}.
\]
Using the uniform moment bound \eqref{Mom1}, we can choose $\varrho\in\sW_{d+\vartheta}$ so that
for all $t>0$,
\[
|\cI^{\eps,n}_1(t)|
\lesssim
\eps^2 n^{2-\beta}\Big(\mE\varrho_1(Z^\eps_t(z))+\varrho_1(z)\Big)\|\varphi\|_{\bC^\beta_{\omega_0}}
\leq
\eps^2 n^{2-\beta}\varrho(z)\|\varphi\|_{\bC^\beta_{\omega_0}}.
\]

\noindent
{\it Step 4: estimate of $\cI^{\eps,n}_2(t)$ (time derivative term).}
We first bound the factor involving $(\nabla_y^2\bar\sL\bar\cT_{t-s}\varphi)_n$.
By \eqref{App2},
\[
\|(\nabla_y^2\bar\sL\bar\cT_{t-s}\varphi)_n\|_{\bC^0_\omega}
\lesssim
n^{2-\beta}\|\bar\sL\bar\cT_{t-s}\varphi\|_{\bC^\beta_\omega}.
\]
Using the coefficient regularity, \eqref{Ho11} and \eqref{ny3},
\[
\|\bar\sL\bar\cT_{t-s}\varphi\|_{\bC^\beta_\omega}
\lesssim
\|\nabla_y^2\bar\cT_{t-s}\varphi\|_{\bC^\beta_{\omega_2}}
\lesssim
\ell(t-s)\|\varphi\|_{\bC^\gamma_{\omega_0}}.
\]
Therefore,
\[
\|(\nabla_y^2\bar\sL\bar\cT_{t-s}\varphi)_n\|_{\bC^0_\omega}
\lesssim
n^{2-\beta}\ell(t-s)\|\varphi\|_{\bC^\gamma_{\omega_0}}.
\]
Combining this with $|U_n|\lesssim \varrho_1$ and \eqref{Mom1}, we obtain
\begin{align*}
|\cI^{\eps,n}_2(t)|
&\lesssim
\eps^2 n^{2-\beta}\|\varphi\|_{\bC^\gamma_{\omega_0}}
\int_0^t \ell(t-s)\mE\varrho_1(Z_s^\eps(z))\dif s\\
&\leq
\eps^2 n^{2-\beta}\varrho(z)\|\varphi\|_{\bC^\gamma_{\omega_0}}\int_0^t\ell(t-s)\dif s\\
&\leq
\eps^2 n^{2-\beta}\varrho(z)\|\ell\|_{L^1}\|\varphi\|_{\bC^\gamma_{\omega_0}}.
\end{align*}

\noindent
{\it Step 5: estimate of $\cI^{\eps,n}_3(t)$ (terms involving $\sL_0$ and $\sL_2$).}
A direct computation of the derivatives of $f_n$ gives that for $i=1,\dots,d$,
\[
\p_{x_i}f_n(s,z)=\p_{x_i}U_n(z):(\nabla_y^2 \bar\cT_{t-s}\varphi)_n(y),
\]
and for $j=1,\dots,\vartheta$,
\begin{align*}
\p_{y_j}f_n(s,z)
&=
U_n(z):\p_{y_j}(\nabla_y^2 \bar\cT_{t-s}\varphi)_n(y)
+
\p_{y_j}U_n(z):(\nabla_y^2 \bar\cT_{t-s}\varphi)_n(y),\\
\p_{y_j}\p_{x_i}f_n(s,z)
&=
\p_{x_i}U_n(z):\p_{y_j}(\nabla_y^2 \bar\cT_{t-s}\varphi)_n(y)
+
\p_{y_j}\p_{x_i}U_n(z):(\nabla_y^2 \bar\cT_{t-s}\varphi)_n(y).
\end{align*}
Using \eqref{UU8}, \eqref{App2}, and \eqref{ny3}, we get for some weights $\varrho_1,\varrho_2\in\sW_{d+\vartheta}$,
\[
\|\nabla_z f_n(s)\|_{\bC^0_{\varrho_1}}
+
\|\nabla_x\nabla_y f_n(s)\|_{\bC^0_{\varrho_1}}
\leq
n^{1-\beta}\ell(t-s)\|\varphi\|_{\bC^\gamma_{\omega_0}},
\]
and
\[
\|\nabla_y^2 f_n(s)\|_{\bC^0_{\varrho_2}}
\leq
n^{2-\beta}\ell(t-s)\|\varphi\|_{\bC^\gamma_{\omega_0}}.
\]
Recalling
$$
\sL_0 f_n=D\cdot\nabla_z f_n+2(\sigma G^*):\nabla_x\nabla_y f_n
$$
and
$$
\sL_2 f_n=GG^*:\nabla_y^2 f_n+F\cdot\nabla_y f_n,
$$
we deduce (for suitable weights $\varrho_3,\varrho_4$)
\[
\|\sL_0 f_n(s)\|_{\bC^0_{\varrho_3}}
\leq
n^{1-\beta}\ell(t-s)\|\varphi\|_{\bC^\gamma_{\omega_0}},
\]
and
\[
\|\sL_2 f_n(s)\|_{\bC^0_{\varrho_4}}
\leq
n^{2-\beta}\ell(t-s)\|\varphi\|_{\bC^\gamma_{\omega_0}}.
\]
Therefore, using \eqref{Mom1} exactly as in Step~4,
\[
|\cI^{\eps,n}_3(t)|
\leq
(\eps n^{1-\beta}+\eps^2 n^{2-\beta})\varrho(z)\|\ell\|_{L^1}\|\varphi\|_{\bC^\gamma_{\omega_0}}.
\]

\noindent
{\it Step 6: estimate of $\cI^{\eps,n}_4(t)$ (mollification error in $\sL_1$).}
We estimate
\[
\sL_1 f_n-\sL_1 f
=
(\sL_1U_n):(\nabla_y^2\bar\cT_{t-s}\varphi)_n-(\sL_1U):\nabla_y^2\bar\cT_{t-s}\varphi.
\]
Split it as
\begin{align*}
|\sL_1 f_n-\sL_1 f|
&\leq
|\sL_1U_n-\sL_1U|\cdot|(\nabla_y^2\bar\cT_{t-s}\varphi)_n|
+
|\sL_1U|\cdot|(\nabla_y^2\bar\cT_{t-s}\varphi)_n-\nabla_y^2\bar\cT_{t-s}\varphi|.
\end{align*}
By \eqref{Ho11}, \eqref{App1} and the $y$-regularity in \eqref{UU8}, we have
\[
\|\sL_1U_n-\sL_1U\|_{\bC^0_{\rho_3}\bC^0_{\omega_3}}
\leq
\|U_n-U\|_{\bC^2_{\rho}\bC^0_\omega}
\lesssim
n^{-\beta}\|U\|_{\bC^{2}_{\rho}\bC^\beta_\omega}
\stackrel{\eqref{UU8}}{\leq}
n^{-\beta}\|Q_2\|_{\bC^\alpha_{\rho_2}\bC^\beta_{\omega_2}}.
\]
Also, again by \eqref{App1} and \eqref{ny3},
\[
\|(\nabla_y^2 \bar\cT_{t-s}\varphi)_n-\nabla_y^2 \bar\cT_{t-s}\varphi\|_{\bC^0_\omega}
\lesssim
n^{-\beta}\|\nabla_y^2 \bar\cT_{t-s}\varphi\|_{\bC^\beta_{\omega_2}}
\leq
n^{-\beta}\ell(t-s)\|\varphi\|_{\bC^\gamma_{\omega_0}}.
\]
Combining these bounds yields, for some weight $\varrho_0\in\sW_{d+\vartheta}$,
\[
|\sL_1 f_n(s,z)-\sL_1 f(s,z)|
\lesssim
n^{-\beta}\ell(t-s)\varrho_0(z)\|\varphi\|_{\bC^\gamma_{\omega_0}}.
\]
Integrating in $s$ and using \eqref{Mom1}, we obtain
\[
|\cI^{\eps,n}_4(t)|
\lesssim
n^{-\beta}\varrho(z)\|\ell\|_{L^1}\|\varphi\|_{\bC^\gamma_{\omega_0}}.
\]

\noindent
{\it Step 7: optimizing in $n$.}
Combining Steps~3--6, we conclude that for all $t>0$,
\[
\left|\mE\int_0^{t}\big(Q_2:\nabla_y^2 \bar\cT_{t-s}\varphi\big)(Z_s^\eps(z))\dif s\right|
\leq
\Big(\eps^2 n^{2-\beta}+\eps n^{1-\beta}+n^{-\beta}\Big)\varrho(z)\|\varphi\|_{\bC^\gamma_{\omega_0}}.
\]
The optimal choice $n = \eps^{-1}$ balances the three error contributions and yields the desired $\eps^\beta$ rate.
\end{proof}

\subsubsection{The $H$-term and the effective operator}
The next lemma handles the   fluctuations involving the  singular term $\eps^{-1}H\cdot\nabla_y\bar\cT_{t-s}\varphi$, which is the most delicate part of the expansion.

\bl\label{Le45}
For $\omega_0\in\sW_\vartheta$ and $\gamma\in(\beta,2]$,
there exists a weight $\varrho\in\sW_{d+\vartheta}$ such that for all $\eps\in(0,1)$ and all $\varphi\in\bC^\gamma_{\omega_0}$ and $z\in\mR^d\times\mR^\vartheta$,
\begin{align}\label{DQ1}
\sup_{t\geq 0}\left|\mE\int^t_0\left[\eps^{-1}H\cdot\nabla_y\bar\cT_{t-s}\varphi-\widetilde\sL\,\bar\cT_{t-s}\varphi\right](Z^\eps_s(z))\dif s\right|
\leq \eps^\beta\varrho(z)\|\varphi\|_{\bC^\gamma_{\omega_0}},
\end{align}
where, for $\Gamma_1,\Gamma_2$ given by \eqref{bcf0},
\begin{align*}
\widetilde\sL f:=\mu_y(\Gamma_1(\cdot,y))\cdot\nabla_yf+\mu_y(\Gamma_2(\cdot,y)):\nabla^2_y f.
\end{align*}
\el

\begin{proof}
The strategy parallels Lemma~\ref{Le44}, but now the corrector is the Poisson solution $\Phi$ from \eqref{pde10}.

\medskip
\noindent
{\it Step 1: object decomposition.}
Fix $\varphi\in\bC^\gamma_{\omega_0}$ and $t>0$. For $s\in(0,t]$ set
\[
f(s,z):=\Phi(z)\cdot\nabla_y \bar\cT_{t-s}\varphi(y),\quad z=(x,y),
\]
and mollify in $y$:
\[
f_n(s,z):=\Phi_n(z)\cdot(\nabla_y \bar\cT_{t-s}\varphi)_n(y).
\]
Since $\p_r\bar\cT_r\varphi=\bar\sL\bar\cT_r\varphi$, we have
\[
\p_s f_n(s,z)=-\Phi_n(z)\cdot(\nabla_y\bar\sL \bar\cT_{t-s}\varphi)_n(y).
\]
Applying \eqref{Ito1} to $\psi=f_n$ and using \eqref{Dz1} yields
\begin{align}\label{JH91}
\mE f_n(t, Z^\eps_{t}(z))
&=f_n(0, z)-\mE\int^{t}_0\left[\Phi_n\cdot(\nabla_y\bar\sL \bar\cT_{t-s}\varphi)_n\right](Z^\eps_s(z))\dif s\no\\
&\quad+\mE\int^{t}_0(\eps^{-1}\sL_0+\sL_2+\eps^{-2}\sL_1)f_n(s, Z^\eps_s(z))\dif s.
\end{align}
Since $\sL_1\Phi=-H$, we have
\[
\sL_1 f(s,z)=(\sL_1\Phi)(z)\cdot\nabla_y\bar\cT_{t-s}\varphi(y)=-H(z)\cdot\nabla_y\bar\cT_{t-s}\varphi(y).
\]
Multiplying \eqref{JH91} by $\eps$ and rearranging, we obtain
\begin{align*}
&\mE\int^t_0\left[\eps^{-1}H\cdot\nabla_y\bar\cT_{t-s}\varphi-\widetilde\sL\,\bar\cT_{t-s}\varphi\right](Z^\eps_s(z))\dif s\\
&\quad=
\eps\Big(f_n(0, z)-\mE f_n(t, Z^\eps_{t}(z))\Big)
+\eps^{-1}\mE\int^{t}_0(\sL_1f_n-\sL_1f)(s, Z^\eps_s)\dif s\\
&\qquad+\eps\mE\int^{t}_0\left[\sL_2f_n(s)-\Phi_n\cdot(\nabla_y\bar\sL \bar\cT_{t-s}\varphi)_n\right](Z^\eps_s(z))\dif s\\
&\qquad+\mE\int^{t}_0\left[\sL_0f_n(s)-\widetilde\sL\,\bar \cT_{t-s}\varphi\right] (Z^\eps_s(z))\dif s\\
&\quad=:\cI^{\eps,n}_1(t)+\cI^{\eps,n}_2(t)+\cI^{\eps,n}_3(t)+\cI^{\eps,n}_4(t).
\end{align*}
\noindent
{\it Step 2: estimate of each terms.}
As in Lemma~\ref{Le44}, by \eqref{App2}, \eqref{ny3}, and \eqref{Mom1}, we deduce that
\[
|\cI^{\eps,n}_1(t)|
\leq
\eps n^{1-\beta}\varrho(z)\|\varphi\|_{\bC^\beta_{\omega_0}}.
\]
Using the same splitting as in Step~6 of Lemma~\ref{Le44}, together with \eqref{App1} and $\Phi\in\bC^{2+\alpha}_\rho\bC^{1+\beta}_\omega$,
we obtain
\[
\|\sL_1f_n(s)-\sL_1f(s)\|_{\bC^0_\varrho}
\leq
n^{-1-\beta}\ell(t-s)\|\varphi\|_{\bC^\gamma_{\omega_0}}.
\]
Consequently,
\[
|\cI^{\eps,n}_2(t)|
\leq
\eps^{-1} n^{-1-\beta}\varrho(z)\|\ell\|_{L^1}\|\varphi\|_{\bC^\gamma_{\omega_0}}.
\]
Differentiating $f_n=\Phi_n\cdot(\nabla_y\bar\cT_{t-s}\varphi)_n$ and using \eqref{App2} together with \eqref{ny3} gives
\[
\|\nabla^j_y f_n(s)\|_{\bC^0_\varrho}\lesssim n^{j-\beta}\ell(t-s)\|\varphi\|_{\bC^\gamma_{\omega_0}},\ j=1,2.
\]
Consequently,
\[
\|\sL_2 f_n(s)\|_{\bC^0_\varrho}\lesssim n^{1-\beta}\ell(t-s)\|\varphi\|_{\bC^\gamma_{\omega_0}},
\]
and similarly
\[
\|\Phi_n\cdot(\nabla_y\bar\sL\bar\cT_{t-s}\varphi)_n\|_{\bC^0_\varrho}
\lesssim
n^{1-\beta}\ell(t-s)\|\varphi\|_{\bC^\gamma_{\omega_0}}.
\]
Hence, using \eqref{Mom1}, we have for some $\varrho\in\sW_{d+\vartheta}$,
\[
|\cI^{\eps,n}_3(t)|
\leq\eps n^{1-\beta}\varrho(z)\|\ell\|_{L^1}\|\varphi\|_{\bC^\gamma_{\omega_0}}.
\]
Finally,
recall that
\[
\sL_0f_n=D\cdot\nabla_zf_n+2(\sigma G^*):(\nabla_x\nabla_y f_n).
\]
A direct computation yields
\[
\sL_0 f_n
=
\Gamma^{(n)}_1\cdot(\nabla_y \bar\cT_{t-s}\varphi)_n
+
\Gamma^{(n)}_2:\nabla^2_y(\bar\cT_{t-s}\varphi)_n,
\]
where
\[
\Gamma^{(n)}_1:=D\cdot \nabla_z\Phi_n+2(\sigma G^*):\nabla_x\nabla_y\Phi_n,
\quad
\Gamma^{(n)}_2:=H\otimes \Phi_n+2G\sigma^*\cdot\nabla_x \Phi_n.
\]
By definition \eqref{bcf0}, the pointwise limits are $\Gamma^{(n)}_1\to\Gamma_1$ and $\Gamma^{(n)}_2\to\Gamma_2$ as $n\to\infty$,
with quantitative rate $n^{-\beta}$ in weighted H\"older norms by \eqref{App1}.
Thus we decompose
\begin{align*}
\sL_0 f_n-\widetilde\sL\,\bar\cT_{t-s}\varphi
&=
\big(\Gamma^{(n)}_1\cdot(\nabla_y \bar\cT_{t-s}\varphi)_n-\Gamma_1\cdot\nabla_y \bar\cT_{t-s}\varphi\big)
+\big(\Gamma_1-\mu_y(\Gamma_1(\cdot,y))\big)\cdot\nabla_y \bar\cT_{t-s}\varphi\\
&+
\big(\Gamma^{(n)}_2:\nabla_y^2(\bar\cT_{t-s}\varphi)_n-\Gamma_2:\nabla_y^2 \bar\cT_{t-s}\varphi\big)
+\big(\Gamma_2-\mu_y(\Gamma_2(\cdot,y))\big):\nabla_y^2 \bar\cT_{t-s}\varphi\\
&=:\sJ^{\eps,n}_{1,s}+\sJ_{2,s}+\sJ^{\eps,n}_{3,s}+\sJ_{4,s}.
\end{align*}
The terms $\sJ^{\eps,n}_{1,s}$ and $\sJ^{\eps,n}_{3,s}$ are pure mollification errors; by \eqref{App1}--\eqref{App2} and \eqref{ny3},
\[
\mE\int_0^t \big|\sJ^{\eps,n}_{1,s}+\sJ^{\eps,n}_{3,s}\big|(Z_s^\eps(z))\dif s
\lesssim
n^{-\beta}\varrho(z)\|\varphi\|_{\bC^\gamma_{\omega_0}}.
\]
For $\sJ_{2,s}$ and $\sJ_{4,s}$, define the centered fields
\[
Q_1:=\Gamma_1-\mu_y(\Gamma_1(\cdot,y)),
\quad
Q_2:=\Gamma_2-\mu_y(\Gamma_2(\cdot,y)).
\]
By Lemma~\ref{Le42} and \eqref{u12}, for some $(\rho,\omega)$ we have $Q_1,Q_2\in\bC^\alpha_\rho\bC^\beta_\omega$ and
$\mu_y(Q_j(\cdot,y))=0$. Hence Lemma~\ref{Le44} applies and yields
\[
\mE\int_0^t \big|\sJ_{2,s}+\sJ_{4,s}\big|(Z_s^\eps(z))\dif s
\lesssim
\eps^\beta\varrho(z)\|\varphi\|_{\bC^\gamma_{\omega_0}}.
\]
Combining the four pieces gives
\[
|\cI_4^{\eps,n}(t)|
\lesssim
\big(n^{-\beta}+\eps^\beta\big)\varrho(z)\|\varphi\|_{\bC^\gamma_{\omega_0}}.
\]

\medskip
\noindent
{\it Step 3: collecting estimates and optimizing.}
Combining the above computations, we arrive at
\begin{align*}
\sup_{t\geq0}
&\left|\mE\int^t_0\Big[\eps^{-1}H\cdot\nabla_y\bar\cT_{t-s}\varphi-\widetilde\sL\,\bar\cT_{t-s}\varphi\Big](Z^\eps_s(z))\dif s\right|\\
&\quad\leq
\Big(\eps n^{1-\beta}+\eps^{-1}n^{-1-\beta}+n^{-\beta}+\eps^\beta\Big)\varrho(z)\|\varphi\|_{\bC^\gamma_{\omega_0}}.
\end{align*}
Choosing $n=\eps^{-1}$ yields the desired \eqref{DQ1}.
\end{proof}

\subsubsection{Proof of the slow convergence theorem}
With the lemmas established above, we now proceed to give:

\begin{proof}[\bf Proof of Theorem~\ref{thm:slow-variable}]
Fix $t>0$ and apply \eqref{Ito1} with $\psi(s,z):=\bar\cT_{t-s}\varphi(y)$.
Since $\psi$ depends only on $y$, we have $\sL_1\psi=0$ and
\[
\sL_0\psi=H\cdot\nabla_y\psi,
\quad
\sL_2\psi=GG^*:\nabla_y^2\psi+F\cdot\nabla_y\psi.
\]
Moreover, $\p_s\bar\cT_{t-s}\varphi=-\bar\sL\,\bar\cT_{t-s}\varphi$, i.e.
\[
\p_s\bar\cT_{t-s}\varphi
=
-\Big(\cG:\nabla^2_y\bar\cT_{t-s}\varphi+\cF\cdot\nabla_y\bar\cT_{t-s}\varphi\Big).
\]
Inserting these expressions into \eqref{Ito1} yields
\begin{align*}
\mE\varphi(Y^\eps_t(z))-\bar\cT_{t}\varphi(y)
&= \mE\int^t_0\Big(GG^*(Z^\eps_s)-\cG(Y^\eps_s)\Big):\nabla^2_y\bar\cT_{t-s}\varphi(Y^\eps_s)\dif s\\
&\quad + \mE\int^t_0\Big(\eps^{-1}H(Z^\eps_s)+F(Z^\eps_s)-\cF(Y^\eps_s)\Big)\cdot\nabla_y\bar\cT_{t-s}\varphi(Y^\eps_s)\dif s.
\end{align*}
We further decompose the second line into centered fluctuations and the $\widetilde\sL$-term:
\[
\mE\varphi(Y^\eps_t(z))-\bar\cT_{t}\varphi(y) = \cR^\eps_1(t) + \cR^\eps_2(t) + \cR^\eps_3(t),
\]
where
\begin{align*}
\cR^\eps_1(t) &:= \mE\int^t_0\big(\widetilde G:\nabla^2_y\bar\cT_{t-s}\varphi\big)(Z^\eps_s)\dif s,\\
\cR^\eps_2(t) &:= \mE\int^t_0\big[\widetilde F\cdot\nabla_y\bar\cT_{t-s}\varphi\big](Z^\eps_s)\dif s,\\
\cR^\eps_3(t) &:= \mE\int^t_0 \left[\eps^{-1}H\cdot\nabla_y - \widetilde\sL\right]\bar \cT_{t-s}\varphi(Z^\eps_s)\dif s,
\end{align*}
with
\[
\widetilde F(z):=F(z)-\mu_y(F(\cdot,y)),
\quad
\widetilde G(z):=GG^*(z)-\mu_y(GG^*(\cdot,y)),
\]
and
$$
\widetilde\sL:=\mu_y(\Gamma_1(\cdot,y))\cdot\nabla_y+\mu_y(\Gamma_2(\cdot,y)):\nabla^2_y .
$$
Applying Lemma~\ref{Le44}  to $Q_1=\widetilde F$ and $Q_2=\widetilde G$, we obtain
\[
\sup_{t\geq0}\big(|\cR^\eps_1(t)|+|\cR^\eps_2(t)|\big)
\leq
\eps^\beta\varrho(z)\|\varphi\|_{\bC^\gamma_{\omega_0}}.
\]
By Lemma~\ref{Le45}, we also have
\[
\sup_{t\geq0}|\cR^\eps_3(t)|
\leq
\eps^\beta\varrho(z)\|\varphi\|_{\bC^\gamma_{\omega_0}}.
\]
Combining these bounds completes the proof.
\end{proof}

\subsection{Limit of the joint distribution}

We now upgrade the convergence of the slow variable to a full description of the joint law, completing the proof of Theorem \ref{main}.

\begin{proof}[\bf Proof of Theorem \ref{main}]
Let $(\rho_0,\omega_0)\in \sW_d\times\sW_\vartheta$ and fix $\gamma\in(\beta,2)$.
Take $\varphi\in\bC^\alpha_{\rho_0}\bC^\gamma_{\omega_0}$ and define
\begin{align*}
\bar\varphi(y):=\mu_y(\varphi(\cdot,y)),\quad
\widetilde\varphi(x,y):=\varphi(x,y)-\bar\varphi(y).
\end{align*}
Fix $t\geq0$ and $z=(x,y)$. Let $X_t^y(x)$ solve the frozen equation \eqref{sde2}.
We decompose the error into three parts:
\begin{align*}
\mE\varphi(Z^\eps_t(z))-\mE\bar\varphi(\bar Y_t(y))
&=\Big(\mE\bar\varphi(Y_t^\eps(z))-\mE\bar\varphi(\bar Y_t(y))\Big)
+\mE\big[\widetilde\varphi(X^y_{t/\eps^2}(x),y)\big]\\
&\quad+\Big(\mE\widetilde\varphi(Z_t^\eps(z))-\mE\big[\widetilde\varphi(X^y_{t/\eps^2}(x),y)\big]\Big)\\
&=:\sI^\eps_1(t,z)+\sI^\eps_2(t,z)+\sI^\eps_3(t,z).
\end{align*}
Choose $\eta\in(\beta,1\wedge\gamma)$. Since $b,\sigma\in\bC^\alpha_{\rm p}\bC^{1+\beta}_{\rm p}$, Theorem~\ref{p1} (applied with parameter regularity exponent $1+\beta$) together with \eqref{Equi2} implies that there exists $\bar\omega\in\sW_\vartheta$ such that
\[
\|\bar\varphi\|_{\bC^\eta_{\bar\omega}}
\lesssim
\|\varphi\|_{\bC^\alpha_{\rho_0}\bC^\eta_{\omega_0}}
\lesssim
\|\varphi\|_{\bC^\alpha_{\rho_0}\bC^\gamma_{\omega_0}}.
\]
Therefore, by Theorem~\ref{thm:slow-variable}, there exist $\varrho_1,\varrho_2\in\sW_{d+\vartheta}$ such that
\[
|\sI^\eps_1(t,z)|
\leq
\eps^\beta\varrho_1(z)\|\bar\varphi\|_{\bC^\eta_{\bar\omega}}
\lesssim
\eps^\beta\varrho_2(z)\|\varphi\|_{\bC^\alpha_{\rho_0}\bC^\gamma_{\omega_0}}.
\]
By \eqref{xy}, there exists $\varrho\in\sW_{d+\vartheta}$ such that
\[
|\sI^\eps_2(t,z)|
\leq\ell_0(t/\eps^2)\varrho(z)\|\widetilde\varphi\|_{\bC^0_{\rho_0}\bC^0_{\omega_0}}.
\]
We proceed to estimate the remaining term $\sI^\eps_3(t,z)$.
Define the frozen semigroup action on the centered test function:
\[
u(t,x,y):=\cT^y_t\big(\widetilde\varphi(\cdot,y)\big)(x)
=\mE\big[\widetilde\varphi(X^y_t(x),y)\big].
\]
Then $\p_t u=\sL_1 u$ and $u(0,\cdot)=\widetilde\varphi$.
For fixed $t>0$ and $\eps\in(0,1)$, set
\[
\widetilde u(s,z):=u((t-s)/\eps^2,z),\quad s\in[0,t].
\]
A direct calculation shows that
\[
\sI^\eps_3(t,z)=\mE\widetilde u(t,Z_t^\eps)-\widetilde u(0,z).
\]
Mollify $\widetilde u$ in the slow variable:
\[
\widetilde u_n(s,x,y):=\big[\widetilde u(s,x,\cdot)*\phi_n\big](y).
\]
Since $\p_s\widetilde u=-\eps^{-2}\sL_1u((t-s)/\eps^2,\cdot)$, we have
\[
\p_s\widetilde u_n(s,z)
=
-\eps^{-2}(\sL_1u)_n((t-s)/\eps^2,z).
\]
Applying It\^o's formula \eqref{Ito1} to $\psi=\widetilde u_n$ and using \eqref{Dz1}, we obtain
\begin{align*}
\sI^\eps_3(t,z)
&=
\mE\big[(\widetilde u-\widetilde u_n)(t,Z^\eps_t)\big]+\big(\widetilde u_n(0,z)-\widetilde u(0,z)\big)\\
&\quad-\mE\int^{t}_0(\eps^{-1}\sL_0+\sL_2)\widetilde u_n(s, Z^\eps_s)\dif s\\
&\quad+\eps^{-2}\mE\int^{t}_0\Big[(\sL_1 u)_n-\sL_1u_n\Big]((t-s)/\eps^2, Z^\eps_s)\dif s\\
&=:\sI^{\eps,n}_{31}(t)+\sI^{n}_{32}+\sI^{\eps,n}_{33}(t)+\sI^{\eps,n}_{34}(t).
\end{align*}
We estimate each of these four terms separately.

\smallskip
\noindent
{\it (i) mollification errors $\sI^{\eps,n}_{31}$ and $\sI^{n}_{32}$.}
From the mollifier estimate \eqref{App1} and the regularity of $\widetilde\varphi$ ensured by \eqref{u12}, we have
\[
\|\widetilde u_n(t)-\widetilde u(t)\|_{\bC^0_\rho\bC^0_\omega}
=\|(\widetilde \varphi)_n-\widetilde \varphi\|_{\bC^0_\rho\bC^0_\omega}
\leq
n^{-\beta}\|\widetilde\varphi\|_{\bC^0_{\rho_0}\bC^\beta_{\omega_0}}
\lesssim
n^{-\beta}\|\varphi\|_{\bC^\alpha_{\rho_0}\bC^\beta_{\omega_0}}.
\]
Similarly, using \eqref{App1} together with the semigroup estimate \eqref{cp0} in Theorem~\ref{cp},
\begin{align*}
\|\widetilde u_n(0)-\widetilde u(0)\|_{\bC^0_{\rho}\bC^0_{\omega}}
&=
\|u_n(t/\eps^2)-u(t/\eps^2)\|_{\bC^0_{\rho}\bC^0_{\omega}}
\lesssim n^{-\beta}\|u(t/\eps^2)\|_{\bC^0_{\rho}\bC^\beta_{\omega}}
\leq
 n^{-\beta}\|\varphi\|_{\bC^\alpha_{\rho_0}\bC^\beta_{\omega_0}}.
\end{align*}
Thus, by the uniform moment estimate \eqref{Mom1}, we obtain for some $\varrho\in\sW_{d+\vartheta}$,
\[
|\sI^{\eps,n}_{31}(t)|+|\sI^{n}_{32}|
\leq
n^{-\beta}\varrho(z)\|\varphi\|_{\bC^\alpha_{\rho_0}\bC^\beta_{\omega_0}}.
\]

\smallskip
\noindent
{\it (ii) estimate of the $\sL_0,\sL_2$ terms $\sI^{\eps,n}_{33}(t)$.}
By \eqref{cp1} in Theorem~\ref{cp} and Remark \ref{Re35},
there exists a decreasing function $h\in L^1([0,\infty))$ such that for all $r>0$,
\begin{align}\label{Sq1}
\|u(r)\|_{\bC^2_\rho\bC^\beta_\omega}\leq h(r)\|\varphi\|_{\bC^\alpha_{\rho_0}\bC^\beta_{\omega_0}}.
\end{align}
Combining this with the mollifier estimate \eqref{App2} gives, for some $\varrho_1\in\sW_{d+\vartheta}$,
\[
|\sL_0\widetilde u_n(s,z)|
\leq
h((t-s)/\eps^2)\,n^{1-\beta}\,\varrho_1(z)\|\varphi\|_{\bC^\alpha_{\rho_0}\bC^\beta_{\omega_0}},
\]
and
\[
|\sL_2\widetilde u_n(s,z)|
\leq
h((t-s)/\eps^2)\,n^{2-\beta}\,\varrho_1(z)\|\varphi\|_{\bC^\alpha_{\rho_0}\bC^\beta_{\omega_0}}.
\]
Therefore, using \eqref{Mom1}, we obtain $\varrho\in\sW_{d+\vartheta}$,
\begin{align*}
|\sI^{\eps,n}_{33}(t)|
&\leq
(\eps^{-1}n^{1-\beta}+n^{2-\beta})
\int_0^t h((t-s)/\eps^2)\mE\varrho_1(Z_s^\eps)\dif s\,
\|\varphi\|_{\bC^\alpha_{\rho_0}\bC^\beta_{\omega_0}}\\
&\leq
(\eps^{-1}n^{1-\beta}+n^{2-\beta})\varrho(z)
\int_0^t h((t-s)/\eps^2)\dif s\,
\|\varphi\|_{\bC^\alpha_{\rho_0}\bC^\beta_{\omega_0}}\\
&=
(\eps n^{1-\beta}+\eps^2 n^{2-\beta})\varrho(z)
\int_0^{t/\eps^2} h(r)\dif r\,
\|\varphi\|_{\bC^\alpha_{\rho_0}\bC^\beta_{\omega_0}}.
\end{align*}

\smallskip
\noindent
{\it(iii) estimate of the commutator term $\sI^{\eps,n}_{34}(t)$.}
Using \eqref{App1} together with the coefficient regularity \eqref{Red1} and \eqref{Sq1}, we have
\begin{align*}
\|(\sL_1 u(s))_n-\sL_1u_n(s)\|_{\bC^0_\rho\bC^0_\omega}
&\leq
\|(\sL_1 u(s))_n-\sL_1 u(s)\|_{\bC^0_\rho\bC^0_\omega}
+\|\sL_1(u(s)-u_n(s))\|_{\bC^0_\rho\bC^0_\omega}\\
&\lesssim
n^{-\beta}\|u(s)\|_{\bC^2_\rho\bC^\beta_\omega}
\leq
n^{-\beta}h(s)\|\varphi\|_{\bC^\alpha_{\rho_0}\bC^\beta_{\omega_0}}.
\end{align*}
Consequently,  for some $\varrho_1\in\sW_{d+\vartheta}$, we derive
\begin{align*}
|\sI^{\eps,n}_{34}(t)|
&\lesssim
n^{-\beta}\eps^{-2}\int_0^t h((t-s)/\eps^2)\mE\varrho_1(Z_s^\eps)\dif s\,
\|\varphi\|_{\bC^\alpha_{\rho_0}\bC^\beta_{\omega_0}}\\
&\leq
n^{-\beta}\varrho(z)\int_0^{t/\eps^2} h(r)\dif r\,
\|\varphi\|_{\bC^\alpha_{\rho_0}\bC^\beta_{\omega_0}}.
\end{align*}

\smallskip
Collecting the above estimates, we obtain
\[
|\sI^\eps_3(t,z)|
\leq
\Big(n^{-\beta}+\eps n^{1-\beta}+\eps^2 n^{2-\beta}\Big)\varrho(z)
\left(\int_0^\infty h(r)\dif r\right)
\|\varphi\|_{\bC^\alpha_{\rho_0}\bC^\beta_{\omega_0}}.
\]
Choosing $n=\eps^{-1}$ yields
\[
\sup_{t\geq0}|\sI^\eps_3(t,z)|
\lesssim
\eps^\beta\varrho(z)\|\varphi\|_{\bC^\alpha_{\rho_0}\bC^\beta_{\omega_0}}.
\]
Finally, combining the estimates for $\sI^\eps_1$, $\sI^\eps_2$, and $\sI^\eps_3$, we arrive at
\[
\sup_{t\geq0}\big|\mE\varphi(Z_t^\eps(z))-\bar\cT_t\bar\varphi(y)\big|
\leq
\varrho(z)\Big(\eps^\beta\|\varphi\|_{\bC^\alpha_{\rho_0}\bC^\gamma_{\omega_0}}
+\ell_0(t/\eps^2)\|\varphi-\bar\varphi\|_{\bC^0_{\rho_0}\bC^0_{\omega_0}}\Big),
\]
which is exactly \eqref{XXX}. This completes the proof.
\end{proof}
	
\section{Uniform-in-time averaging principle}

In this  section, we demonstrate how our general framework applies to the classical averaging principle.
Consider the slow-fast stochastic system
\begin{equation*}
\left\{
\begin{aligned}
\dif X^\eps_t
&=\eps^{-2} b(X^\eps_t,Y^\eps_t)\dif t+\sqrt{2}\,\eps^{-1}\sigma(X^\eps_t,Y^\eps_t)\dif W_t,\ &X^\eps_0=x,\\
\dif Y^\eps_t
&=F(X^\eps_t,Y^\eps_t)\dif t+\sqrt{2}\,G(X^\eps_t,Y^\eps_t)\dif W_t,\ &Y^\eps_0=y,
\end{aligned}
\right.
\end{equation*}
where $W$ is an $\mR^m$-valued Brownian motion. This corresponds to the general framework \eqref{sde0} with $c\equiv0$ and $H\equiv0$, so that the $\cO(\eps^{-1})$ terms vanish in  both the fast and the slow equations.

We impose the following assumptions on the coefficients.

\begin{enumerate}[{\bf (A$^\alpha_\beta$)$'$}]
\item
For some $\alpha,\beta\in(0,2]$, we have
$b,\sigma,F,G\in\bC^{\alpha}_{\rm p}\bC^{\beta}_{\rm p}$.
Moreover, the following conditions hold.

\begin{enumerate}[(a)]
\item (Fast component) Assume that one of the following holds.

\smallskip
\noindent
{\it (i) (Exponential ergodicity)} There exist $\lambda,\Lambda\in\sW_d$ such that for all $(x,y)\in\mR^d\times\mR^\vartheta$,
\[
\lambda(x)|\xi|^{2} \leq |\sigma(x,y)\xi|^2 \leq \Lambda(x) |\xi|^{2},
\quad \forall \,\xi \in \mathbb{R}^m,
\]
and for any $\kappa>0$, there exists  constants $c_\kappa,C_\kappa>0$
\[
\kappa|\sigma:\sigma|(x,y)+\langle x,b(x,y)\rangle\leq -c_\kappa|x|^2+C_\kappa.
\]

\smallskip
\noindent
{\it (ii) (Polynomial ergodicity)} There exist constants $0<\lambda<\Lambda<\infty$ such that for all $(x,y)\in\mR^d\times\mR^\vartheta$,
\[
\lambda|\xi|^{2} \leq |\sigma(x,y)\xi|^2 \leq \Lambda|\xi|^{2},
\quad \forall \,\xi \in \mathbb{R}^m,
\]
and
\begin{align*}
\lim_{|x|\to\infty}\sup_{y\in \mR^\vartheta}\langle x,b(x,y)\rangle =-\infty.
\end{align*}

\item (Slow component) Assume that one of the following holds.

\smallskip
\noindent
{\it (i) (Exponential ergodicity)} There exist $\lambda,\Lambda\in\sW_\vartheta$  such that for all $(x,y)\in\mR^d\times\mR^\vartheta$,
\[
\lambda(y)|\xi|^{2} \leq |G(x,y)\xi|^2 \leq \Lambda(y) |\xi|^{2},
\quad \forall \,\xi \in \mathbb{R}^m,
\]
and for any $\kappa>0$, there exist constants $c_\kappa,C_\kappa>0,m\geq 2$ such that
\[
\kappa|G:G|(x,y)+\langle y,F(x,y)\rangle\leq -c_\kappa|y|^2+C_\kappa(1+|x|^m).
\]

\smallskip
\noindent
{\it (ii) (Polynomial ergodicity)} There exist constants $0<\lambda<\Lambda<\infty$ such that for all $(x,y)\in\mR^d\times\mR^\vartheta$,
\[
\lambda|\xi|^{2} \leq |G(x,y)\xi|^2 \leq \Lambda|\xi|^{2},
\quad \forall \,\xi \in \mathbb{R}^m,
\]
and
\[
\lim_{|y|\to\infty}\sup_{x\in \mR^d}\langle y,F(x,y)\rangle =-\infty.
\]
\end{enumerate}
\end{enumerate}

Under condition (a) of {\bf (A$^\alpha_\beta$)$'$}, for each $y\in\mR^\vartheta$, the frozen fast equation \eqref{sde2}
admits a unique invariant probability measure $\mu_y(\dif x)$
(see Examples 1.6 and 1.7 in \cite{XXZ26}).
In this setting, the effective coefficients in (\ref{bcf}) reduce to
\[
\cF(y)=\mu_y\big(F(\cdot,y)\big),
\quad
\cG(y)=\mu_y\big((GG^*)(\cdot,y)\big).
\]
The averaged equation for the slow component is therefore
\begin{equation}\label{sde-avg-limit}
\dif \bar Y_t = \cF(\bar Y_t)\dif t + \sqrt{2}\,\cG(\bar Y_t)^{1/2}\dif \bar W_t,\qquad \bar Y_0=y,
\end{equation}
where $\bar W$ is a $\vartheta$-dimensional Brownian motion, and $\cG^{1/2}$ denotes the (symmetric) matrix square root.
As a result of Theorem~\ref{main}, we have  the following uniform-in-time averaging result.

\bt\label{aver}
Assume {\bf (A$^\alpha_\beta$)$'$}. Let $(\rho_0,\omega_0)\in\sW_d\times\sW_\vartheta$, $\alpha\in(0,1]$, $\gamma\in(\beta,2]$, and let $\varphi\in\bC^\alpha_{\rho_0}\bC^\gamma_{\omega_0}$.
Then there exist a weight  $\varrho\in\sW_{d+\vartheta}$ such that for all $\eps\in(0,1)$, $z=(x,y)$ and  all $t\geq0$,
\begin{align}\label{uit-av-joint}
\Big|\mE\big[\varphi(X_t^\eps,Y_t^\eps)(z)\big]-\mE\big[\bar\varphi(\bar Y_t(y))\big]\Big|
\leq \varrho(z)\Big(\eps^\beta\|\varphi\|_{\bC^\alpha_{\rho_0}\bC^\gamma_{\omega_0}}
+\ell_0(t/\eps^2)\|\varphi-\bar\varphi\|_{\bC^0_{\rho_0}\bC^0_{\omega_0}}\Big),
\end{align}
where $\bar\varphi(y):=\mu_y(\varphi(\cdot,y))$, $\bar Y_t$ solves the averaged equation \eqref{sde-avg-limit}, and the mixing rate $\ell_0(t)$ is given by:
\begin{itemize}
    \item $\ell_0(t)=C\e^{-\gamma t}$ for some $C,\gamma>0$ under assumption {\rm (a)} (i);
    \item $\ell_0(t)=C_m (1\wedge t^{-m})$ for any prescribed $m>2$ under assumption {\rm (a)} (ii).
\end{itemize}
\et

\begin{remark}
(i) Classical averaging principles are typically established on finite time intervals $[0,T]$ (see, e.g., \cite{KY2,GR}). Uniform-in-time estimates have only been obtained recently under strong dissipativity and regularity assumptions \cite{BDOZ,Cr-Do-Go-Ot-So}.  Theorem \ref{aver} improves these results in several directions:
\begin{enumerate}[--]
  \item  It allows for unbounded and irregular coefficients as well as test functions;
  \item  It requires only weak (polynomial) mixing properties for the dynamics;
  \item  It provides an explicit convergence rate that depends only on the regularity of the coefficients in the slow variable;
  \item It establishes convergence for the joint law of $(X_t^\eps,Y_t^\eps)$, not merely the marginal law of the slow component.
\end{enumerate}

(ii) In the general diffusion approximation setting of Theorem \ref{main}, we assume $b,\sigma\in\bC^{\alpha}_{\rm p}\bC^{1+\beta}_{\rm p}$ with $\beta\in(0,1]$  due to the need for solving the Poisson equation \eqref{pde10} and the associated regularity estimates for the corrector $\Phi$. In the averaging regime considered here, we have $H\equiv0$, so  $\Phi\equiv0$, and all estimates involving $\Phi$ become trivial. Consequently,  the regularity assumptions on the coefficients can be relaxed to $b,\sigma\in\bC^{\alpha}_{\rm p}\bC^{\beta}_{\rm p}$ with $\beta\in(0,2]$,  and the convergence rate improves accordingly.
\end{remark}

\begin{proof}
We verify the hypotheses of Theorem \ref{main}. Under {\bf (A\(^\alpha_\beta\))$'$}, the frozen fast dynamics \eqref{sde2} admits a unique invariant measure $\mu_y$, and the ergodic estimate \eqref{xy} holds with $\ell_0(t)$ as specified in Theorem \ref{aver}: exponential decay under (a) {\it (i)} and polynomial decay under (a) {\it (ii)}, see Examples 1.6 and 1.7 in \cite{XXZ26}. Thus {\bf (H\(_1\))} is satisfied.

Moreover, under condition (b), one readily checks that either
\[
\lambda(y)|\xi|^{2} \leq \langle \cG(y)\xi,\xi\rangle \leq \Lambda(y) |\xi|^{2},
\quad \forall \xi\in\mR^\vartheta,
\]
together with the coercivity bound
\[
\langle y,\cF(y)\rangle+\mathrm{tr}\,\cG(y)\leq -c_0|y|^2+c_1,
\]
or, in the uniform case,
\[
\lambda|\xi|^{2} \leq \langle \cG(y)\xi,\xi\rangle \leq \Lambda|\xi|^{2},
\quad \forall \xi\in\mR^\vartheta,
\]
and
\[
\lim_{|y|\to\infty}\langle y,\cF(y)\rangle=-\infty.
\]
Hence the averaged limit  \eqref{sde-avg-limit} satisfies {\bf (H$_2$)}.

It remains to verify {\bf (H\(_0\))}. Under (a), we have by \cite[Lemma~1]{Ve1} (see also \cite[Proposition 1]{Pa-Ve1}) that for any $m>0$,
\begin{align}\label{esX}
\sup_{\eps\in(0,1)}\sup_{t\geq 0}\mE|X_t^\eps|^m\leq C_m(1+|x|^m)<\infty.
\end{align}
Under (b) {\it (ii)}, the same bound holds directly for $Y_t^\eps$.
Under (b) {\it (i)},  applying It\^o's formula and using Young's inequality, we deduce that for any $p\geq 1$,
\begin{align*}
\frac{d}{dt}\mE|Y_t^\eps|^{2p}
&\leq \mE\Big(\big[2p(2p-1)|G:G|(X_t^\eps,Y_t^\eps)+\langle Y_t^\eps,F(X_t^\eps,Y_t^\eps)\rangle\big] |Y_t^\eps|^{2p-2}\Big)\\
&\leq \mE\Big(\big[-c_p|Y_t^\eps|^2+C_p(1+|X_t^\eps|^m)\big]\,|Y_t^\eps|^{2p-2}\Big)\\
&\leq -\tilde c_p\,\mE|Y_t^\eps|^{2p}+\tilde C_p\bigl(1+\mE|X_t^\eps|^{mp}\bigr),
\end{align*}
which in turn implies by  (\ref{esX}) that
\begin{align*}
\sup_{\eps\in(0,1)}\sup_{t\geq 0}\mE|Y_t^\eps|^{2p}\leq C_p(1+|x|^{mp}+|y|^{2p}).
\end{align*}
Thus {\bf (H$_0$)} holds.   Note that in the present setting, the vanishing corrector $\Phi\equiv0$ allows us to relax  the regularity requirements  to $b,\sigma\in\bC^{\alpha}_{\rm p}\bC^{\beta}_{\rm p}$ with $\beta\in(0,2]$, compared to the general case of Theorem \ref{main} that $b,\sigma\in\bC^{\alpha}_{\rm p}\bC^{1+\beta}_{\rm p}$ with $\beta\in(0,1]$.  Applying Theorem \ref{main} directly yields the desired estimate \eqref{uit-av-joint}.
\end{proof}

\section{Uniform S-K approximation with state-dependent matrix friction}
\label{sec:SK-generalA}

In this section, we study the small-mass limit for Langevin equations (\ref{lav}) with state-dependent, possibly non-symmetric friction matrices. Our goal is to derive a uniform-in-time convergence for the joint position-scaled velocity law as a direct application of Theorem \ref{main}. We remark that  the analysis developed below extends naturally to a broader class of stochastic Hamiltonian systems with multiple time scales, see e.g. \cite{B18,BW18}.

\subsection{The kinetic system and assumptions}

We consider the second-order stochastic system
\begin{equation}\label{eq:SK-second-order-generalA}
\eps^2 \ddot Y_t^\eps
=
-\nabla U(Y_t^\eps)-A(Y_t^\eps)\dot Y_t^\eps
+\sqrt{2}\,\sigma(Y_t^\eps)\,\dot W_t,
\qquad
(\dot Y_0^\eps,Y_0^\eps)=(v,y)\in\mR^d\times\mR^d,
\end{equation}
where
\[
U:\mR^d\to[0,\infty),
\qquad
A,\sigma:\mR^d\to\mR^d\otimes\mR^d.
\]
Introducing the scaled velocity
\[
X_t^\eps:=\eps\,\dot Y_t^\eps,
\]
we rewrite \eqref{eq:SK-second-order-generalA} as the first-order  system
\begin{equation}\label{eq:SK-first-order-generalA}
\left\{
\begin{aligned}
\dif X_t^\eps
&=
-\eps^{-2}A(Y_t^\eps)X_t^\eps\,\dif t
-\eps^{-1}\nabla U(Y_t^\eps)\,\dif t
+\eps^{-1}\sqrt{2}\,\sigma(Y_t^\eps)\,\dif W_t,\\
\dif Y_t^\eps
&=
\eps^{-1}X_t^\eps\,\dif t,
\qquad
(X_0^\eps,Y_0^\eps)=(\eps v,y).
\end{aligned}
\right.
\end{equation}
This is a particular case of the general model \eqref{sde0} with $\vartheta=d$ and
\[
G\equiv0,
\quad
F\equiv0,
\quad
H(x,y)=x,
\quad
b(x,y)=-A(y)x,
\quad
c(x,y)=-\nabla U(y).
\]
\begin{remark}
Under the Smoluchowski--Kramers scaling, the natural initial datum for \eqref{eq:SK-first-order-generalA} is the scaled state
\[
(X_0^\eps,Y_0^\eps)=(\eps v,y).
\]
Accordingly, when applying the abstract averaging theorem, the initial variable $z=(x,y)$ should always be understood as the scaled initial state, namely
\[
x=X_0^\eps=\eps v.
\]
\end{remark}

We impose the following assumptions  on the coefficients.

\smallskip
\begin{enumerate}[{\bf (A$^{\beta}_{\rm SK}$)}]
\item
Let $\beta\in(0,1]$. Assume that
$U,A,\sigma\in \bC_{\mathrm p}^{1+\beta}(\mR^d)$.
Moreover, there exist constants $\kappa_0,\kappa_1,\lambda,\Lambda>0$ such that for all $y,\xi\in\mR^d$,
\begin{equation}\label{eq:ASK-elliptic-A}
\kappa_0|\xi|^2\leq \langle A(y)\xi,\xi\rangle\leq \kappa_0^{-1}|\xi|^2,\quad
\lambda |\xi|^2\leq |\sigma(y)\xi|^2\leq \Lambda |\xi|^2,
\quad
\|\nabla A(y)\|\leq\kappa_1,
\end{equation}
and there exist constants $c_0,C_U>0$ such that
\begin{equation}\label{eq:U}
\big\langle y, A(y)^{-1}\nabla U(y)\big\rangle\geq c_0|y|^2-C_U.
\end{equation}
Furthermore, there exists  a function $h\in C^2(\mR^d;[0,\infty))$ such that for some constants $\kappa_2,\kappa_3,C_A,c_1>0$,
\begin{equation}\label{eq:ASK-h}
\kappa_2|y|^2\leq h(y)\leq \kappa_3(1+|y|^2),
\end{equation}
and, with
$\cA_h(y):=(A(y)^{-1})^*\nabla h(y)$,
\begin{equation}\label{eq:ASK-Ah}
\frac{|\cA_h(y)|}{1+|y|}+\|\nabla \cA_h(y)\|\leq C_A,
\qquad y\in\mR^d,
\end{equation}
and
\begin{equation}\label{eq:ASK-U}
\big\langle \cA_h(y),\nabla U(y)\big\rangle
\geq c_0|y|^2+c_1U(y)-C_U,
\qquad y\in\mR^d.
\end{equation}
\end{enumerate}

\smallskip
\begin{example}
Let $\hbar:\mR_+\to\mR_+$ satisfy
\[
\kappa_0\leq \hbar(s)\leq \kappa_1,
\qquad
|\hbar'(s)|\leq \kappa_2,
\qquad s\geq0,
\]
and set
\[
A(y)=\hbar(|y|^2)I_d.
\]
Define
\[
h(y):=\frac{1}{2}\int_0^{|y|^2}\hbar(s)\,\dif s.
\]
Then conditions \eqref{eq:ASK-elliptic-A} and \eqref{eq:ASK-h} hold, and we readily obtain
\[
\cA_h(y)=y.
\]
Hence, \eqref{eq:ASK-Ah} is automatically satisfied, while \eqref{eq:ASK-U} reduces to the standard coercivity condition
\[
\langle y,\nabla U(y)\rangle\geq c_0|y|^2+c_1U(y)-C_U.
\]
\end{example}

\begin{example}
Suppose   $A(y)$ satisfies \eqref{eq:ASK-elliptic-A} alongside the bound
\[
\|\nabla A(y)\|\leq \frac{C}{1+|y|}.
\]
Let $U(y)=\frac{1}{2}u(|y|^2)$ with
\[
su'(s)\geq c_0s+c_1u(s)-C_u, \qquad s>0.
\]
Then \eqref{eq:ASK-Ah}, \eqref{eq:U}, and \eqref{eq:ASK-U} hold with the choice
\[
h(y)=\frac{1}{2}|y|^2.
\]
Indeed, observing that
\[
\nabla h(y)=y, \qquad \nabla U(y)=u'(|y|^2)\,y, \qquad \cA_h(y)=(A(y)^{-1})^*y,
\]
we immediately obtain
\[
\|\nabla\cA_h(y)\|\leq\|(A(y)^{-1})^*\|+\|\nabla (A(y)^{-1})\|\,|y|\leq C_A.
\]
Moreover, applying the uniform ellipticity condition,
\[
\big\langle \cA_h(y),\nabla U(y)\big\rangle
=
u'(|y|^2)\big\langle A(y)^{-1}y,y\big\rangle
\geq \kappa_0|y|^2u'(|y|^2).
\]
Thus, \eqref{eq:U} and \eqref{eq:ASK-U} are both satisfied.
\end{example}

\subsection{Frozen fast dynamics  and homogenized limit}

For each frozen configuration $y\in\mR^d$, the frozen system (\ref{sde2}) reduces to an Ornstein--Uhlenbeck equation
\begin{equation}\label{eq:frozen-fast-generalA}
\dif X_t^y=-A(y)X_t^y\,\dif t+\sqrt{2}\,\sigma(y)\,\dif W_t,
\qquad
X_0^y=x.
\end{equation}

\begin{lemma}\label{lem:frozen-generalA}
Under {\bf (A$^\beta_{\rm SK}$)}, for each $y\in\mR^d$, equation \eqref{eq:frozen-fast-generalA} admits the explicit solution
\[
X_t^y(x)
=
\e^{-tA(y)}x+\sqrt{2}\int_0^t \e^{-(t-s)A(y)}\sigma(y)\,\dif W_s.
\]
The unique invariant probability measure is the Gaussian distribution
\[
\mu_y=\sN(0,\Sigma(y)),
\]
where $\Sigma(y)$ solves the Lyapunov equation
\begin{equation}\label{eq:Sigma-Lyapunov-generalA}
A(y)\Sigma(y)+\Sigma(y)A(y)^*=2\,\sigma(y)\sigma(y)^*.
\end{equation}
Equivalently,
\begin{equation}\label{eq:Sigma-integral-generalA}
\Sigma(y)=2\int_0^\infty \e^{-sA(y)}\sigma(y)\sigma(y)^*\e^{-sA(y)^*}\,\dif s.
\end{equation}
Moreover,  for every polynomial weight $\rho_0\in \sW_d$, there exist a polynomial weight $\rho_1\in \sW_d$ and a constant $C>0$ such that
\begin{equation}\label{eq:mixing-generalA}
\big|\mE[\varphi(X^y_t(x))]-\mu_y(\varphi)\big|
\leq
C\e^{-\kappa_0 t}\rho_1(x)\|\varphi\|_{\bC^0_{\rho_0}},
\qquad
t\ge0,\ x,y\in\mR^d,
\end{equation}
for all $\varphi\in\bC^0_{\rho_0}(\mR^d)$. In particular, the centering condition holds:
\[
\mu_y(H(\cdot,y))=\int_{\mR^d}x\,\mu_y(\dif x)=0.
\]
\end{lemma}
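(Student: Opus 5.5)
The plan is to treat \eqref{eq:frozen-fast-generalA} as a linear SDE with constant coefficients, since $y$ is frozen, and to compute everything explicitly.

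\emph{Explicit solution.} Applying It\^o's formula to $\e^{tA(y)}X_t^y$ gives the stated variation-of-constants formula. Hence $X_t^y(x)$ is Gaussian with mean $\e^{-tA(y)}x$ and covariance
\[
\Sigma_t(y)=2\int_0^t \e^{-sA(y)}\sigma\sigma^*(y)\e^{-sA(y)^*}\dif s .
\]
The coercivity $\langle A(y)\xi,\xi\rangle\geq\kappa_0|\xi|^2$ yields $\frac{\dif}{\dif t}|\e^{-tA}\xi|^2=-2\langle A\e^{-tA}\xi,\e^{-tA}\xi\rangle\leq-2\kappa_0|\e^{-tA}\xi|^2$. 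Therefore $\|\e^{-tA(y)}\|\leq \e^{-\kappa_0 t}$ uniformly in $y$. It follows that $\Sigma_t(y)\to\Sigma(y)$, where $\Sigma(y)$ is given by \eqref{eq:Sigma-integral-generalA}, and that $\|\Sigma(y)\|\leq\Lambda/\kappa_0$. Differentiating $s\mapsto \e^{-sA}\sigma\sigma^*\e^{-sA^*}$ and integrating over $[0,\infty)$ shows that $\Sigma$ solves the Lyapunov equation \eqref{eq:Sigma-Lyapunov-generalA}. Uniqueness of this solution follows because the spectrum of $A$ lies in the right half-plane, so $X\mapsto AX+XA^*$ is injective. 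Invariance of $\sN(0,\Sigma(y))$ then follows from the law identity $\e^{-tA}\xi+\sqrt2\int_0^t\cdots\sim\sN(0,\e^{-tA}\Sigma \e^{-tA^*}+\Sigma_t)=\sN(0,\Sigma)$ for $\xi\sim\sN(0,\Sigma)$ independent of $W$; this again uses the Lyapunov equation. Uniqueness of the invariant measure follows from the convergence estimate below, since any invariant $\mu$ satisfies $\mu(\varphi)=\int\mE\varphi(X_t^y(x))\mu(\dif x)\to\mu_y(\varphi)$.

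\emph{Mixing estimate \eqref{eq:mixing-generalA}.} This is the main step. For $\varphi\in\bC^0_{\rho_0}$ I would not use any regularity of $\varphi$, so a coupling-in-law or total-variation argument between $\sN(m_t,\Sigma_t)$ and $\sN(0,\Sigma)$ is needed, weighted by $\rho_0$. I would split the difference through the intermediate Gaussian $\sN(0,\Sigma_t)$:
\[
|\mE\varphi(X^y_t)-\mu_y(\varphi)|\leq |\mE\varphi(m_t+\Sigma_t^{1/2}\xi)-\mE\varphi(\Sigma_t^{1/2}\xi)|+|\mE\varphi(\Sigma_t^{1/2}\xi)-\mE\varphi(\Sigma^{1/2}\xi)|.
\]
Nondegeneracy $\lambda|\xi|^2\leq|\sigma\xi|^2$ gives $\Sigma_t\geq 2\lambda\int_0^t \e^{-2s\|A\|}\dif s\geq c>0$ for $t\geq1$, uniformly in $y$ since $\|A\|\leq\kappa_0^{-1}$.

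For $t\geq1$, both Gaussian densities are uniformly nondegenerate with uniformly bounded covariances. A weighted total-variation (or Hellinger/Pinsker) comparison then gives the following bounds, with constants uniform in $y$:
\begin{itemize}
\item the mean-shift term is bounded by $C|m_t|\,\rho_1(x)\leq C\e^{-\kappa_0 t}|x|\rho_1$;
\item the covariance term is bounded by $C\|\Sigma-\Sigma_t\|\leq C\e^{-2\kappa_0 t}$.
\end{itemize}
Concretely, I would write $\int \rho_0|p_1-p_2|$ and bound it by $\big(\int\rho_0^2(p_1+p_2)\big)^{1/2}\,\|p_1-p_2\|_{\rm H}$, using polynomial moments of Gaussians. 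For $t\in[0,1]$ the estimate is trivial: $\mE\rho_0(X^y_t)\lesssim\rho_0$-moments $\lesssim 1+|x|^r$ uniformly in $y$, so the bound holds after enlarging $C$. The uniformity in $y$ is the delicate point, and it is exactly where the uniform bounds in \eqref{eq:ASK-elliptic-A} enter.

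\emph{Centering.} Finally, $\mu_y(H(\cdot,y))=\int x\,\sN(0,\Sigma(y))(\dif x)=0$ because $\mu_y$ is centered.
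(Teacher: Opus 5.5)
Your proposal is correct and follows the same route as the paper's sketch: the explicit Gaussian transition law, $\|\e^{-tA(y)}\|\le \e^{-\kappa_0 t}$, and uniform covariance bounds. Your weighted Hellinger comparison supplies the detail for \eqref{eq:mixing-generalA} that the paper only asserts.

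One small repair is needed. Hypothesis \eqref{eq:ASK-elliptic-A} bounds only the symmetric part of the possibly non-symmetric matrix $A$, so $\|A\|\le\kappa_0^{-1}$ is not available. Instead, derive the uniform lower bound $\Sigma_t\ge 2\lambda\int_0^t \e^{-2s/\kappa_0}\,\dif s$ from $\frac{\dif}{\dif s}|\e^{-sA^*}\xi|^2=-2\langle A\e^{-sA^*}\xi,\e^{-sA^*}\xi\rangle\ge -2\kappa_0^{-1}|\e^{-sA^*}\xi|^2$.
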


\begin{proof}
The  explicit formula  follows from the variation-of-constants formula. By \eqref{eq:ASK-elliptic-A}, the symmetric part of $A(y)$ is uniformly coercive, which implies
\[
\|\e^{-tA(y)}\|\leq \e^{-\kappa_0 t},
\qquad t\ge0,\ y\in\mR^d.
\]
This guarantees the absolute convergence of the integral in \eqref{eq:Sigma-integral-generalA} and the exponential stability of the frozen semigroup. The existence and uniqueness of the invariant Gaussian law are standard features of Ornstein--Uhlenbeck processes, and its covariance is completely characterized by \eqref{eq:Sigma-Lyapunov-generalA}. Estimate \eqref{eq:mixing-generalA} is a consequence of the explicit Gaussian transition formula, the exponential decay of the mean, and the uniform bounds on the covariance. The centering property holds trivially since  $\mu_y$ has zero mean.
\end{proof}

Let
\[
\sL_1:=(\sigma\sigma^*)(y):\nabla_x^2-A(y)x\cdot\nabla_x
\]
be the  generator of the frozen SDE \ref{eq:frozen-fast-generalA}.
We proceed to identify the corrector and the effective coefficients.

\begin{lemma}\label{lem:effective-generalA}
Under {\bf (A$^\beta_{\rm SK}$)}, the unique centered solution to the Poisson equation
\begin{equation*}
\sL_1\Phi(\cdot,y)=-H(\cdot,y)=-x
\end{equation*}
is given by
\begin{equation}\label{eq:corrector-generalA}
\Phi(x,y)=A(y)^{-1}x.
\end{equation}
Consequently, the effective diffusion matrix and drift are
\begin{align}
\cG(y)&=A(y)^{-1}\sigma(y)\sigma(y)^*(A(y)^{-1})^*,\label{eq:G-generalA}\\
\cF(y)&=-A(y)^{-1}\nabla U(y)+\mathfrak B(y),\label{eq:F-generalA}
\end{align}
where $\mathfrak B(y)$ is called the noise-induced term given componentwise by
\begin{equation}\label{eq:B-generalA}
\mathfrak B_i(y)
=
\sum_{j,k=1}^d \Sigma_{jk}(y)\,\partial_{y_j}(A(y)^{-1})_{ik},
\qquad i=1,\dots,d.
\end{equation}
\end{lemma}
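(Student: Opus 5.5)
Since $\sL_1$ is the Ornstein--Uhlenbeck generator and $H(x,y)=x$ is linear in $x$, I will look for a linear corrector $\Phi(x,y)=M(y)x$. Plugging this in gives $\nabla_x^2\Phi=0$ and $\nabla_x\Phi=M(y)$, so $\sL_1\Phi=-M(y)A(y)x$. Setting this equal to $-x$ forces $M(y)A(y)=I$, hence $M=A(y)^{-1}$; this inverse exists because the symmetric part of $A$ is uniformly coercive by \eqref{eq:ASK-elliptic-A}. The resulting $\Phi$ is centered, $\mu_y(\Phi(\cdot,y))=A^{-1}\mu_y(x)=0$, since $\mu_y=\sN(0,\Sigma(y))$ by Lemma \ref{lem:frozen-generalA}. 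It has linear growth, so it lies in the polynomially weighted class where the Poisson equation has a unique centered solution (by \cite[Theorem 5.1]{XXZ26} or the representation \eqref{pou}, using the mixing bound \eqref{eq:mixing-generalA}). That uniqueness identifies $\Phi$ as \eqref{eq:corrector-generalA}.

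For the effective coefficients I plug into \eqref{bcf}--\eqref{bcf0} with $G\equiv0$, $F\equiv0$, $c=-\nabla U(y)$, $H=x$, so the cross terms involving $\sigma G^*$ and $G\sigma^*$ vanish. This leaves $\Gamma_2=H\otimes\Phi=x\otimes A^{-1}x$, whose $(i,j)$ entry is $x_i(A^{-1}x)_j$, and
\[
\Gamma_1=c\cdot\nabla_x\Phi+H\cdot\nabla_y\Phi=-A^{-1}\nabla U+\sum_j x_j\,\partial_{y_j}(A^{-1})\,x .
\]
Averaging against $\mu_y$ and using $\mu_y(x_jx_k)=\Sigma_{jk}$ gives
\[
\cF_i=-(A^{-1}\nabla U)_i+\sum_{j,k}\Sigma_{jk}\,\partial_{y_j}(A^{-1})_{ik},
\]
which is \eqref{eq:F-generalA}--\eqref{eq:B-generalA}. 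For the diffusion coefficient, $\mu_y(\Gamma_2)=\Sigma(A^{-1})^*$, so
\[
\cG=\tfrac12\bigl(\Sigma (A^{-1})^*+A^{-1}\Sigma\bigr)=\tfrac12 A^{-1}\bigl(A\Sigma+\Sigma A^*\bigr)(A^{-1})^* .
\]
The Lyapunov equation \eqref{eq:Sigma-Lyapunov-generalA} replaces $A\Sigma+\Sigma A^*$ by $2\sigma\sigma^*$, which yields \eqref{eq:G-generalA}.

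The computations are routine. The one point needing care is justifying uniqueness of the centered solution in the growing-function class. I handle it by noting that any two centered solutions differ by a $\sL_1$-harmonic function of polynomial growth. Such a function is invariant under the semigroup, $v=\cT_t v$, and then \eqref{eq:mixing-generalA} gives $\cT_t v\to\mu_y(v)=0$, so $v\equiv0$. I also need to track index conventions, namely the contraction in $H\cdot\nabla_y\Phi$ and the symmetrization $(\Gamma_2+\Gamma_2^*)/2$, so that the transpose lands correctly in $\cG$.
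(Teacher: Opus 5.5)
Your proposal is correct. For the corrector and the drift $\cF$ it matches the paper's argument: check $\sL_1(A^{-1}x)=-x$, then average $c\cdot\nabla_x\Phi+H\cdot\nabla_y\Phi$ against $\sN(0,\Sigma(y))$ using $\mu_y(x_jx_k)=\Sigma_{jk}$.

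The difference lies in how you obtain $\cG$. You compute $\mu_y(\Gamma_2)=\Sigma(A^{-1})^*$ from the Gaussian second moments, symmetrize, and then apply the Lyapunov equation \eqref{eq:Sigma-Lyapunov-generalA}. The paper instead reuses the identity from the proof of Lemma~\ref{Le42}, namely $\langle\xi,\cG(y)\xi\rangle=\mu_y(|G_\xi+\sigma^*\nabla_x\Phi_\xi|^2)$. With $G\equiv0$ and $\nabla_x\Phi_\xi=(A^{-1})^*\xi$ constant in $x$, this immediately gives $|\sigma(y)^*(A^{-1})^*\xi|^2$. Since both sides are symmetric matrices, the quadratic form determines $\cG$.

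The two routes rest on the same fact. The Lyapunov equation is $\mu_y(\sL_1(x\otimes x))=0$, and the Lemma~\ref{Le42} identity is $\mu_y(\sL_1\Phi_\xi^2)=0$ with $\Phi_\xi$ linear in $x$, i.e.\ a contraction of the former. For this step the paper's route needs neither the Gaussian form of $\mu_y$ nor $\Sigma$ itself. Yours makes the role of the Lyapunov equation visible and shows how the non-symmetric parts of $A$ cancel after symmetrization.

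Your explicit uniqueness argument fills in a point the paper leaves to the general Poisson theory of \cite{XXZ26}. A centered $\sL_1$-harmonic function $v$ of polynomial growth satisfies $v=\cT_t v\to\mu_y(v)=0$ by \eqref{eq:mixing-generalA}.
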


\begin{proof}
Since $\sL_1x=-A(y)x$, the expression \eqref{eq:corrector-generalA} is easily verified. For $\xi\in\mR^d$, define
\[
\Phi_\xi(x,y):=\langle \Phi(x,y),\xi\rangle
=
\langle x,(A(y)^{-1})^*\xi\rangle.
\]
Then
\[
\nabla_x\Phi_\xi(x,y)=(A(y)^{-1})^*\xi,
\]
 and the abstract formula \eqref{bcf} for the effective covariance yields
\[
\langle \cG(y)\xi,\xi\rangle
=
\mu_y\big(|\sigma(y)^*\nabla_x\Phi_\xi(\cdot,y)|^2\big)
=
\big\langle A(y)^{-1}\sigma(y)\sigma(y)^*(A(y)^{-1})^*\xi,\xi\big\rangle,
\]
which proves \eqref{eq:G-generalA}.
For the effective drift, recall that $D=(c,H)=(-\nabla U(y),x)$. A direct computation gives
\[
\Gamma_1=D\cdot\nabla_z\Phi
=
c\cdot\nabla_x\Phi+H\cdot\nabla_y\Phi.
\]
Since $\nabla_x\Phi=A(y)^{-1}$, the first component evaluates to
\[
c\cdot\nabla_x\Phi=-A(y)^{-1}\nabla U(y).
\]
Moreover,
\[
\partial_{y_j}\Phi_i(x,y)
=
\sum_{k=1}^d\partial_{y_j}(A(y)^{-1})_{ik}\,x_k,
\]
and hence
\[
(H\cdot\nabla_y\Phi)_i
=
\sum_{j,k=1}^d x_jx_k\,\partial_{y_j}(A(y)^{-1})_{ik}(y).
\]
Taking expectation with respect to $\mu_y=\sN(0,\Sigma(y))$, we obtain
\[
\mu_y\big((H\cdot\nabla_y\Phi)_i\big)
=
\sum_{j,k=1}^d \Sigma_{jk}(y)\,\partial_{y_j}(A(y)^{-1})_{ik}.
\]
This confirms \eqref{eq:F-generalA} and \eqref{eq:B-generalA}.
\end{proof}

Consequently, the homogenized limit for the slow motion is described by the It\^o SDE
\begin{equation}\label{eq:effective-SK-generalA}
\dif \bar Y_t
=
\cF(\bar Y_t)\,\dif t
+
\sqrt{2}\,\cG(\bar Y_t)^{1/2}\,\dif \bar W_t,
\qquad
\bar Y_0=y.
\end{equation}

\begin{remark}
If the friction matrix acts as a scalar multiplier, i.e., $A(y)=h(y)I_d$, then \eqref{eq:Sigma-Lyapunov-generalA} reduces symmetrically to
\[
\Sigma(y)=\frac{\sigma(y)\sigma(y)^*}{h(y)},
\]
 and the effective coefficients simplify to
\[
\cG(y)=\frac{\sigma(y)\sigma(y)^*}{h(y)^2},\qquad
\cF(y)=-\frac{\nabla U(y)}{h(y)}-\frac{\sigma(y)\sigma(y)^*\,\nabla h(y)}{h(y)^3}.
\]
This elegant reduction perfectly recovers the classical Smoluchowski--Kramers formulas characterizing the scalar-friction regime.
\end{remark}

\subsection{Convergence of   joint  position-scaled
velocity law}

Denote $Z_t^\eps:=(X_t^\eps,Y_t^\eps)\in\mR^{2d}$.
We can now state the main result of this section.

\begin{theorem}[Uniform Smoluchowski--Kramers approximation]\label{thm:uniform-SK-generalA}
Assume {\bf (A$^\beta_{\rm SK}$)}. Let $\gamma\in(\beta,2)$, $\alpha\in(0,1]$, and let $(\rho_0,\omega_0)\in \sW_d\times \sW_d$. Then there exists a weight $\varrho\in \sW_{2d}$ such that, for every $\varphi\in \bC^\alpha_{\rho_0}\bC^\gamma_{\omega_0}$,   $\eps\in(0,1)$ and all $t\geq0$,
\begin{equation}\label{eq:uniform-SK-generalA}
\Big|\mE\big[\varphi(Z_t^\eps(z))\big]-\bar\cT_t\bar\varphi(y)\Big|
\leq
\varrho(z)\Big(
\eps^\beta\|\varphi\|_{\bC^\alpha_{\rho_0}\bC^\gamma_{\omega_0}}
+
\e^{-\kappa_0 t/\eps^2}
\|\varphi-\bar\varphi\|_{\bC^0_{\rho_0}\bC^0_{\omega_0}}
\Big),
\end{equation}
where
\[
\bar\varphi(y):=\mu_y(\varphi(\cdot,y)),
\qquad
\bar\cT_t\bar\varphi(y):=\mE\big[\bar\varphi(\bar Y_t(y))\big],
\]
$\mu_y=\sN(0,\Sigma(y))$ is given by Lemma~\ref{lem:frozen-generalA}, and $\bar Y$ solves \eqref{eq:effective-SK-generalA}.
\end{theorem}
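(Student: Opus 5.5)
The plan is to derive Theorem~\ref{thm:uniform-SK-generalA} directly from Theorem~\ref{main}, applied with $\vartheta=d$, $b(x,y)=-A(y)x$, $c(x,y)=-\nabla U(y)$, $\sigma(x,y)=\sigma(y)$, $H(x,y)=x$ and $F\equiv G\equiv 0$. Two things must be checked: the regularity condition \eqref{Red1} and the hypotheses {\bf (H$_0$)}--{\bf (H$_2$)}. Once these hold, \eqref{eq:uniform-SK-generalA} is exactly \eqref{XXX} with $\ell_0(t)=C\e^{-\kappa_0 t}$ from \eqref{eq:mixing-generalA}.

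\textbf{Regularity and {\bf (H$_1$)}.} Since $A,\sigma,U\in\bC^{1+\beta}_{\rm p}$ and the $x$-dependence is linear or absent, we get $b,\sigma,H\in\bC^\alpha_{\rm p}\bC^{1+\beta}_{\rm p}$. The linear growth in $x$ is absorbed by the polynomial weight. Likewise $c=-\nabla U\in\bC^\alpha_{\rm p}\bC^\beta_{\rm p}$, and $F,G$ are trivially regular. For {\bf (H$_1$)}, the ellipticity in (a) is the uniform bound on $\sigma$. Part (b) is \eqref{eq:mixing-generalA}, and $\int t\e^{-\kappa_0 t}\,\dif t<\infty$. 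Part (c) is the centering in Lemma~\ref{lem:frozen-generalA}.

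\textbf{{\bf (H$_2$)}.} By Lemma~\ref{lem:effective-generalA}, $\cG=A^{-1}\sigma\sigma^*(A^{-1})^*$ is uniformly elliptic and bounded, because $A$ and $A^{-1}$ are bounded and $\sigma$ is nondegenerate; this gives (b). The drift $\cF=-A^{-1}\nabla U+\mathfrak B$ is locally H\"older with polynomial growth. The term $\mathfrak B$ is bounded, since $\Sigma$ is bounded by \eqref{eq:Sigma-integral-generalA} and $\nabla(A^{-1})=-A^{-1}(\nabla A)A^{-1}$ is bounded using $\|\nabla A\|\le\kappa_1$. Then \eqref{eq:U} gives $\langle y,\cF(y)\rangle\le -c_0|y|^2+C(1+|y|)$. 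This is the dissipativity needed for weak well-posedness and exponential ergodicity with polynomial weights, via \cite[Examples 1.6, 1.7]{XXZ26}, so (a) and (c) hold with exponential $\bar\ell$.

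\textbf{{\bf (H$_0$)} (main obstacle).} The hard step is the uniform-in-time, uniform-in-$\eps$ moment bound \eqref{Mom1} for the degenerate kinetic system. Well-posedness is standard, since the coefficients are locally Lipschitz and nonexplosion follows from the Lyapunov function below. I would build a Lyapunov function of the form
\[
V_\eps(x,y)=\tfrac12|x|^2+U(y)+\delta\,h(y)+\delta\,\eps\,\langle \cA_h(y),x\rangle,
\]
with $\delta>0$ small. The cross term is chosen so that $\eps^{-1}$ times its $y$-derivative against $\dot Y=\eps^{-1}x$, combined with the $x$-drift $-\eps^{-2}Ax-\eps^{-1}\nabla U$, produces $-\delta\langle\cA_h,\nabla U\rangle$ plus a term cancelling $\delta\eps^{-1}\langle\nabla h,x\rangle$. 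This cancellation works because $(A^{-1})^*\nabla h\cdot Ax=\nabla h\cdot x$.

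Computing the generator, the dangerous $\eps^{-1}$ terms $\langle x,\nabla U\rangle$ cancel between $\frac12|x|^2$ and $U(y)$. The term $\eps^{-2}\langle Ax,x\rangle\ge\kappa_0\eps^{-2}|x|^2$ dominates everything quadratic in $x$, including $\delta|x|^2\|\nabla\cA_h\|\le\delta C_A|x|^2$ and the It\^o constant $\eps^{-2}\mathrm{tr}(\sigma\sigma^*)$. Using \eqref{eq:ASK-U}, we then get
\[
\sL_\eps V_\eps\le -c\,\eps^{-2}|x|^2-c\delta(|y|^2+U)+C\eps^{-2}.
\]
The remaining point is that the constant $\eps^{-2}$ is balanced by the $\eps^{-2}|x|^2$ dissipation, but not by the $y$-dissipation. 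I would handle this by working with $\mE V_\eps$ and exploiting the fast relaxation of $|x|^2$. Concretely, I would use the modified function $V_\eps-\frac12\mathrm{tr}(\Sigma(y))$-type corrections, or first obtain $\sup_t\mE|X^\eps_t|^{2p}\le C(1+|x|^{2p})$ from the OU structure with $\eps$-independent constants, and then integrate. This yields $\frac{\dif}{\dif t}\mE V_\eps\le -c\delta\,\mE V_\eps+C$.

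Higher moments would follow the same way, by applying the generator to $V_\eps^p$ and using \eqref{eq:ASK-h} together with $|\eps\langle\cA_h,x\rangle|\le C\eps(1+|y|)|x|$, so that $V_\eps$ is comparable to $|x|^2+|y|^2+U$. This gives \eqref{Mom1} with a polynomial $\varrho_1$, and Theorem~\ref{main} then concludes, with $x=\eps v$ understood as the scaled initial state.
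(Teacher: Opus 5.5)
Your treatment of \eqref{Red1}, {\bf (H$_1$)} and {\bf (H$_2$)} matches the paper's. The argument for {\bf (H$_0$)}, however, has a genuine gap, exactly at the point you flag. With $\delta$ fixed, your bound $\sL_\eps V_\eps\le -c\,\eps^{-2}|x|^2-c\delta(|y|^2+U)+C\eps^{-2}$ only yields $\sup_t\mE V_\eps(Z^\eps_t)\le V_\eps(z)+C/(\delta\eps^2)$, and neither remedy repairs this.

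\begin{itemize}
\item A correction $g(y)$ such as $\frac12\mathrm{tr}\,\Sigma(y)$ cannot help. Here $\sL_\eps g=\eps^{-1}\langle x,\nabla g(y)\rangle$ is of order $\eps^{-1}$ and linear in $x$. The offending term $\eps^{-2}\big(\mathrm{tr}(\sigma\sigma^*)-\langle Ax,x\rangle\big)=\eps^{-2}\sL_1\big(\tfrac12|x|^2\big)$ is of order $\eps^{-2}$. Removing $\frac12|x|^2$ to kill it destroys the cancellation of $\eps^{-1}\langle x,\nabla U\rangle$.
\item You cannot first bound $\sup_t\mE|X^\eps_t|^{2p}\le C(1+|x|^{2p})$ ``from the OU structure''. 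The fast equation is forced by $-\eps^{-1}\nabla U(Y^\eps_t)$, so after the initial layer $X^\eps_t$ fluctuates around $-\eps A^{-1}\nabla U(Y^\eps_t)$, and a $y$-independent bound is false. Any correct bound needs moments of $U(Y^\eps_t)$ or $\nabla U(Y^\eps_t)$, which is what you are trying to prove.
\item Appealing to fast relaxation of the centered integrand is circular in the same way. The integral $\int_0^t\eps^{-2}\mE\big[\mathrm{tr}(\sigma\sigma^*)(Y^\eps_s)-\langle A(Y^\eps_s)X^\eps_s,X^\eps_s\rangle\big]\dif s$ is exactly the energy increment $\mE\big[\tfrac12|X^\eps_t|^2+U(Y^\eps_t)\big]-\tfrac12|x|^2-U(y)$.
\end{itemize}

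The missing idea is the correct $\eps$-scaling: the mechanical energy must carry the weight $\eps^2$. The same weight goes on $|x|^2$ and $2U$, so the $\eps^{-1}$ cross terms still cancel. The paper takes
\[
V_{\eps,\eta}=\eps^2\big(|x|^2+2U(y)\big)+\eta\eps\langle x,\cA_h(y)\rangle+\eta h(y)+1,
\]
which is $2\eps^2$ times your $V_\eps$ with $\delta=\eta\eps^{-2}/2$, plus $1$. Then $\sL_\eps V_{\eps,\eta}=-2\langle Ax,x\rangle+\eta(x\otimes x):\nabla\cA_h-\eta\langle\cA_h,\nabla U\rangle+2\,\mathrm{tr}(\sigma\sigma^*)$ is entirely $O(1)$. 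Together with $V_{\eps,\eta}\asymp1+|y|^2+\eps^2|x|^2+\eps^2U$ and $\Gamma_\eps(V_{\eps,\eta})\le CV_{\eps,\eta}$, this gives $\sL_\eps V_{\eps,\eta}^m\le -c_mV_{\eps,\eta}^m+C_m$ uniformly in $\eps$.

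This controls only $Y^\eps$ (and $\eps X^\eps$), so the fast variable is handled afterwards, in the reverse order from your plan. For $V=|x|^2+2U(y)$ one has $\sL_\eps V\le-2\kappa_0\eps^{-2}V+\eps^{-2}\big(C+4\kappa_0U(y)\big)$, and similarly for $V^m$. Once $\sup_{t,\eps}\mE\,U(Y^\eps_t)^m$ is known, Gronwall gives a bound in which the two factors $\eps^{-2}$ cancel.

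As a minor point, $\nabla U$ is only in $\bC^\beta_{\rm p}$, so the coefficients are not locally Lipschitz when $\beta<1$. Weak well-posedness therefore needs a different justification, which the paper does not supply either.
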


\begin{proof}
We  verify the hypotheses of Theorem~\ref{main}.

\smallskip
\noindent{\it (i) Verification of ${\bf(H_1)}$.}
Lemma~\ref{lem:frozen-generalA} establishes the existence of the unique invariant measure $\mu_y$ with  exponential mixing rate \eqref{eq:mixing-generalA}, and confirms the centering condition
\[
\mu_y(H(\cdot,y))=0.
\]
The uniform ellipticity aspect of ${\bf(H_1)}$ is a direct consequence of \eqref{eq:ASK-elliptic-A}.

\smallskip
\noindent{\it (ii) Verification of ${\bf(H_2)}$.}
Lemma~\ref{lem:effective-generalA} identifies the corrector $\Phi$ and the effective coefficients $\cF$ and $\cG$. Since $A,\sigma\in\bC_{\mathrm p}^{1+\beta}(\mR^d)$ and $A$ is uniformly invertible by \eqref{eq:ASK-elliptic-A}, standard composition estimates imply that $A^{-1}$ and $\Sigma$ also belong to $\bC_{\mathrm p}^{1+\beta}(\mR^d)$. Consequently, $\mathfrak B$, $\cF$, and $\cG$ share this regularity.
Moreover, by \eqref{eq:ASK-elliptic-A},
\[
\kappa_0|\xi|
\leq \big|A(y)^{-1}\xi\big|
\leq \kappa_0^{-1}|\xi|,
\qquad y,\xi\in\mR^d,
\]
and combining this with \eqref{eq:G-generalA}, we obtain
\begin{equation*}
\lambda\kappa_0^2|\xi|^2
\leq
\langle \cG(y)\xi,\xi\rangle
\leq
\Lambda\kappa_0^{-2}|\xi|^2,
\qquad y,\xi\in\mR^d.
\end{equation*}
Hence, $\cG$ is uniformly elliptic.

Next, define the Lyapunov function
\[
W(y):=1+|y|^2.
\]
Let $\bar{\sL}$ be the generator of \eqref{eq:effective-SK-generalA}, namely
\[
\bar{\sL}f=\cF\cdot\nabla f+\cG:\nabla^2f.
\]
Using \eqref{eq:F-generalA} alongside \eqref{eq:U} and the fact that $\mathfrak{B}$ and $\cG$ are globally bounded (since $A^{-1}$, $\nabla A$, and $\Sigma$ are globally bounded by our assumptions), we have for some constants $c,C>0$,
\begin{align*}
\bar{\sL}W(y)&=
-2\big\langle y,A(y)^{-1}\nabla U(y)\big\rangle
+2\langle y,\mathfrak B(y)\rangle
+2\operatorname{tr}\cG(y)\\
&\leq
-2c_0|y|^2+2C_U+2|y|\|\mathfrak B(y)\|+2\operatorname{tr}\cG(y)\\
&\leq -c\,W(y)+C,
\end{align*}
which implies the exponential ergodicity of the effective equation \eqref{eq:effective-SK-generalA}.

\smallskip
\noindent{\it (iii) Uniform moment estimates for the slow variable.}
Fix $\eta\in(0,1]$ and define
\begin{equation*}
V_{\eps,\eta}(x,y)
:=
\eps^2\bigl(|x|^2+2U(y)\bigr)
+\eta\eps\langle x,\cA_h(y)\rangle
+\eta h(y)+1,
\end{equation*}
where $\cA_h(y)$ is given in \eqref{eq:ASK-Ah}.
Recall that the full generator decomposes as
\[
\sL_\eps=\eps^{-1}\sL_0+\eps^{-2}\sL_1,
\]
where
\[
\sL_0=x\cdot\nabla_y-\nabla U(y)\cdot\nabla_x,
\qquad
\sL_1=(\sigma\sigma^*)(y):\nabla_x^2-A(y)x\cdot\nabla_x.
\]
We first compute the contribution from $\sL_0$. Observe that
\[
\sL_0(\eps^2|x|^2)
=
-2\eps^2\langle \nabla U(y),x\rangle,
\qquad
\sL_0(2\eps^2U(y))
=
2\eps^2\langle x,\nabla U(y)\rangle,
\]
so these two cross-terms cancel precisely. Furthermore,
\[
\sL_0\langle x,\cA_h(y)\rangle
=
(x\otimes x):\nabla\cA_h(y)-\langle \cA_h(y),\nabla U(y)\rangle,
\]
and
\[
\sL_0(\eta h(y))=\eta\langle x,\nabla h(y)\rangle.
\]
We next compute the contribution from $\sL_1$. Noting that $\cA_h(y)=(A(y)^{-1})^*\nabla h(y)$, we have
\[
\nabla_x V_{\eps,\eta}
=
2\eps^2x+\eta\eps\cA_h(y),
\qquad
\nabla_x^2 V_{\eps,\eta}=2\eps^2I_d.
\]
This yields
\[
(\sigma\sigma^*)(y):\nabla_x^2V_{\eps,\eta}
=
2\eps^2\operatorname{tr}\bigl(\sigma(y)\sigma(y)^*\bigr),
\]
and
\begin{align*}
-A(y)x\cdot\nabla_xV_{\eps,\eta}
&=
-A(y)x\cdot\Bigl(2\eps^2x+\eta\eps(A(y)^{-1})^*\nabla h(y)\Bigr)\\
&=
-2\eps^2\langle A(y)x,x\rangle
-\eta\eps\langle A(y)x,(A(y)^{-1})^*\nabla h(y)\rangle\\
&=
-2\eps^2\langle A(y)x,x\rangle
-\eta\eps\langle x,\nabla h(y)\rangle.
\end{align*}
Therefore,
\[
\sL_1V_{\eps,\eta}(x,y)
=
2\eps^2\operatorname{tr}\bigl(\sigma(y)\sigma(y)^*\bigr)
-2\eps^2\langle A(y)x,x\rangle
-\eta\eps\langle x,\nabla h(y)\rangle.
\]
Combining both contributions yields
\begin{align}
\sL_\eps V_{\eps,\eta}(x,y)
&=
-2\langle A(y)x,x\rangle
+\eta(x\otimes x):\nabla\cA_h(y)
\notag\\
&\quad
-\eta\langle \cA_h(y),\nabla U(y)\rangle
+2\operatorname{tr}\bigl(\sigma(y)\sigma(y)^*\bigr).
\label{eq:LV-generalA}
\end{align}
Crucially, the singular mixed term of order $\eps^{-1}$ cancels out exactly.

Now, \eqref{eq:ASK-elliptic-A} and \eqref{eq:ASK-Ah} imply that
\[
\big|(x\otimes x):\nabla\cA_h(y)\big|\leq C_A|x|^2,
\]
while \eqref{eq:ASK-U} dictates
\[
-\eta\langle \cA_h(y),\nabla U(y)\rangle
\leq
-\eta c_0|y|^2-\eta c_1U(y)+\eta C_U.
\]
Substituting these bounds into \eqref{eq:LV-generalA}, we find
\[
\sL_\eps V_{\eps,\eta}(x,y)
\leq
-\bigl(2\kappa_0-\eta C_A\bigr)|x|^2
-\eta c_0|y|^2
-\eta c_1U(y)
+C.
\]
By choosing $\eta>0$ sufficiently small, we guarantee the existence of a uniform constant $c>0$ such that
\begin{equation}\label{eq:drift-generalA}
\sL_\eps V_{\eps,\eta}(x,y)
\leq
-c\bigl(|x|^2+|y|^2+U(y)\bigr)+C,
\end{equation}
with constants strictly independent of $\eps$.

By invoking Young's inequality alongside \eqref{eq:ASK-Ah} and \eqref{eq:ASK-h}, for $\eta>0$ chosen small enough, there exist constants $0<c_1\leq c_2<\infty$, independent of $\eps\in(0,1)$, ensuring the equivalence:
\begin{equation}\label{eq:Veps-equiv-generalA}
c_1\bigl(1+|y|^2+\eps^2|x|^2+\eps^2U(y)\bigr)
\leq
V_{\eps,\eta}(x,y)
\leq
c_2\bigl(1+|y|^2+\eps^2|x|^2+\eps^2U(y)\bigr).
\end{equation}
Combining \eqref{eq:drift-generalA} and \eqref{eq:Veps-equiv-generalA}, we immediately deduce that
\begin{equation*}
\sL_\eps V_{\eps,\eta}(x,y)\leq -\delta V_{\eps,\eta}(x,y)+C
\end{equation*}
for some $\delta,C>0$ independent of $\eps$.

Let $m\geq1$. Applying the diffusion operator $\sL_\eps$ to the $m$-th power yields
\[
\sL_\eps(V_{\eps,\eta}^m)
=
mV_{\eps,\eta}^{m-1}\sL_\eps V_{\eps,\eta}
+
m(m-1)V_{\eps,\eta}^{m-2}\Gamma_\eps(V_{\eps,\eta}),
\]
where the \textit{carr\'e du champ} operator evaluates to
\[
\Gamma_\eps(V_{\eps,\eta})
=
\eps^{-2}\big\langle (\sigma\sigma^*)(y)\nabla_xV_{\eps,\eta},\nabla_xV_{\eps,\eta}\big\rangle.
\]
Since
\[
\nabla_xV_{\eps,\eta}
=
2\eps^2x+\eta\eps \cA_h(y),
\]
we have
\[
\Gamma_\eps(V_{\eps,\eta})
\leq
C\eps^{-2}\bigl(\eps^4|x|^2+\eps^2|\cA_h(y)|^2\bigr)
\leq
C\bigl(\eps^2|x|^2+|y|^2+1\bigr).
\]
Exploiting \eqref{eq:ASK-Ah} and \eqref{eq:Veps-equiv-generalA}, we infer that
\[
\Gamma_\eps(V_{\eps,\eta})\leq C\,V_{\eps,\eta}.
\]
Consequently, utilizing Young's inequality allows us to absorb lower-order terms, leading to
\[
\sL_\eps(V_{\eps,\eta}^m)\leq -c_m V_{\eps,\eta}^m+C_m,
\]
for suitable constants $c_m,C_m>0$ independent of $\eps$. Dynkin's formula and Gronwall's lemma subsequently yield the uniform moment bounds
\[
\sup_{\eps\in(0,1)}\sup_{t\geq0}
\mE\big[V_{\eps,\eta}(Z_t^\eps)^m\big]
\leq
C_m\bigl(1+V_{\eps,\eta}(z)^m\bigr).
\]
In particular, this provides uniform moment bounds for the spatial slow variable:
\begin{equation}\label{eq:uniform-Y-bound}
\sup_{\eps\in(0,1)}\sup_{t\geq0}
\mE\big[|Y_t^\eps|^m\big]
\leq
\widetilde{C}_m\bigl(1+|y|^2+|x|^2+U(y)\bigr)^{m/2}.
\end{equation}

\smallskip
\noindent{\it (iv) Uniform moment estimates for the fast variable.}
To establish the uniform bounds for the fast variable $X_t^\eps$, we define the unperturbed energy functional
\[
V(x,y):=|x|^2+2U(y).
\]
Applying $\sL_\eps$ and utilizing the coercivity condition in \eqref{eq:ASK-elliptic-A}, we obtain
\begin{align*}
\sL_\eps V(x,y)
&=
\eps^{-2}\bigl(2\operatorname{tr}(\sigma(y)\sigma(y)^*)-2\langle A(y)x,x\rangle\bigr)\\
&\leq
-2\kappa_0\eps^{-2}|x|^2+C\eps^{-2}\\
&\leq
-2\kappa_0\eps^{-2}V(x,y)+\eps^{-2}\bigl(C+4\kappa_0 U(y)\bigr),
\end{align*}
where we have used the definition of $V(x,y)$ to substitute $|x|^2 = V(x,y) - 2U(y)$.

For $m\geq 1$, we apply the diffusion operator $\sL_\eps$ to $V^m$:
\begin{align*}
\sL_\eps V^m
&=
mV^{m-1}\sL_\eps V
+
m(m-1)V^{m-2}\eps^{-2}\big\langle (\sigma\sigma^*)(y)\nabla_x V,\nabla_x V\big\rangle\\
&=
mV^{m-1}\sL_\eps V
+
4m(m-1)V^{m-2}\eps^{-2}\big\langle (\sigma\sigma^*)(y)x,x\big\rangle\\
&\leq
mV^{m-1}\eps^{-2}\bigl[-2\kappa_0 V(x,y)+C+4\kappa_0 U(y)\bigr]
+
4m(m-1)\eps^{-2}\Lambda|x|^2V^{m-2}\\
&\leq
-2\kappa_0 m\eps^{-2}V^m
+
\eps^{-2}C_m V^{m-1}\bigl(1+U(y)\bigr)\\
&\leq
-c_m\eps^{-2}V^m
+
\eps^{-2}C_m'\bigl(1+U(y)^m\bigr),
\end{align*}
where we utilized $|x|^2\leq V(x,y)$ to bound the second-order diffusion term, and applied Young's inequality to absorb the mixed term $V^{m-1}U(y)$ into $V^m$, producing strictly positive constants $c_m, C_m, C_m'$ independent of $\eps$.

By It\^o's formula, taking expectations yields the differential inequality
\begin{align*}
\frac{\dif}{\dif t} \mE\big[V^m(Z^\eps_t)\big]
&=
\mE\big[\sL_\eps V^m(Z^\eps_t)\big]
\leq
-c_m\eps^{-2}\mE\big[V^m(Z^\eps_t)\big]
+
\eps^{-2}C_m'\bigl(1+\mE\big[U(Y^\eps_t)^m\big]\bigr).
\end{align*}
Since $U\in\bC_{\mathrm p}^{1+\beta}(\mR^d)$, it exhibits at most polynomial growth. By the uniform spatial moment bounds on $Y_t^\eps$ established in \eqref{eq:uniform-Y-bound}, we deduce that $\sup_{\eps\in(0,1)}\sup_{t\geq0}\mE\big[U(Y^\eps_t)^m\big]$ is bounded by some polynomial weight $\widehat{C}_K\bigl(1+|y|^2+|x|^2+U(y)\bigr)^K$ depending only on the initial state $z=(x,y)$. Substituting this uniform bound back into the differential inequality and integrating via Gronwall's lemma gives
\[
\sup_{\eps\in(0,1)}\sup_{t\geq0}
\mE\big[V(Z_t^\eps)^m\big]
\leq
V(x,y)^m + \widetilde{C}_m\bigl(1+|y|^2+|x|^2+U(y)\bigr)^K.
\]
Since the scaled initial state is $z = (x,y)$ with $x = \eps v$, the initial evaluation $V(Z_0^\eps)^m = (|x|^2 + 2U(y))^m = V(x,y)^m$ exactly matches the initial energy, providing the required uniform bounds for the fast variable.

\smallskip
\noindent{\it (v) Conclusion.}
Combining {\it (iii)} and {\it (iv)}, we find that ${\bf(H_0)}$ is satisfied.
Thus, all assumptions of Theorem~\ref{main} are   verified. By \eqref{eq:mixing-generalA}, the frozen semigroup converges to equilibrium at the exponential rate $\e^{-\kappa_0 t}$, meaning the boundary-layer term in the abstract slow-fast estimate naturally takes the form
\[
\e^{-\kappa_0 t/\eps^2}.
\]
Applying Theorem~\ref{main}, we obtain the desired estimate \eqref{eq:uniform-SK-generalA}.
\end{proof}

\subsection{Uniform-in-time thermodynamic approximations and physical implications}

The quantitative description of the limit for the joint law of $(X_t^\eps, Y_t^\eps)$ has important physical implications. Because the Smoluchowski--Kramers approximation established in Theorem~\ref{thm:uniform-SK-generalA} is \emph{uniform-in-time}, it allows us to rigorously derive global-in-time asymptotics for key thermodynamic quantities such as total energy and entropy production rate, and to formulate a formal asymptotic description of the free energy. This provides, for the first time, a mathematically sharp version of adiabatic elimination for these observables beyond finite time intervals $[0,T]$ and up to the limit $t \to \infty$.

To formalize this, recall that for any thermodynamic observable $\varphi(x,y)$ with appropriate polynomial growth, Theorem~\ref{thm:uniform-SK-generalA} yields the global error bound:
\begin{equation}\label{eq:thermo-uniform-bound}
\Big|\mE\big[\varphi(X_t^\eps, Y_t^\eps)\big]-\mE\big[\bar\varphi(\bar Y_t)\big]\Big|
\leq
\varrho(z)\Big( \eps^\beta\|\varphi\| + \e^{-\kappa_0 t/\eps^2}\|\varphi-\bar\varphi\| \Big), \qquad \forall t \geq 0,
\end{equation}
where $z=(v, y)$ is the scaled initial state, $\bar\varphi(y) = \int_{\mR^d} \varphi(x,y)\,\mu_y(\dif x)$ is the local equilibrium expectation, and $\varrho(z)$ is a polynomial weight.

\subsubsection{Total mechanical energy}

Define the scaled mechanical energy (Hamiltonian) of the system as
\begin{equation*}
\cE_\eps(t) := \frac{1}{2}|X_t^\eps|^2 + U(Y_t^\eps).
\end{equation*}
This quantity corresponds to $\eps^2$ times the physical kinetic energy plus the potential energy, which is purely mechanical and does not explicitly involve the friction matrix $A(y)$. The observable $\varphi(x,y) = \frac{1}{2}|x|^2 + U(y)$ has quadratic growth, which is perfectly controlled by the uniform moment estimates established in Section~\ref{sec:SK-generalA}. Using the fact that the frozen equilibrium $\mu_y$ is a centered Gaussian with covariance $\Sigma(y)$, the effective macroscopic energy evaluates to
\[
\bar{\cE}(y) := \int_{\mR^d} \left( \frac{1}{2}|x|^2 + U(y) \right) \mu_y(\dif x) = \frac{1}{2}\operatorname{tr}\bigl(\Sigma(y)\bigr) + U(y).
\]
Theorem~\ref{thm:uniform-SK-generalA} then yields the following explicit uniform-in-time estimate.

\begin{corollary}[Uniform energy approximation]
Under the assumptions of Theorem \ref{thm:uniform-SK-generalA}, there exists a constant $C(z)>0$ such that
\begin{equation}\label{eq:energy-uniform}
\left| \mE[\cE_\eps(t)] - \mE\big[\bar{\cE}(\bar Y_t)\big] \right| \leq C(z)\Big( \eps^\beta + \e^{-\kappa_0 t/\eps^2} \Big), \qquad \forall t \ge 0.
\end{equation}
\end{corollary}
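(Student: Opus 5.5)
The plan is to apply Theorem~\ref{thm:uniform-SK-generalA} directly to the observable $\varphi(x,y)=\frac12|x|^2+U(y)$. The only work is to check that $\varphi$ lies in a space $\bC^\alpha_{\rho_0}\bC^\gamma_{\omega_0}$ with $\gamma\in(\beta,2)$, and to identify $\bar\varphi$ with $\bar\cE$. I would write $\varphi=\varphi_1+\varphi_2$, with $\varphi_1(x,y)=\frac12|x|^2$ and $\varphi_2(x,y)=U(y)$, and treat each piece separately; the estimate \eqref{eq:uniform-SK-generalA} is linear in $\varphi$.

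For $\varphi_1$ the $y$-dependence is trivial, so it belongs to $\bC^\alpha_{\rho_0}\bC^\gamma_{\omega_0}$ for any $\gamma<2$. Here one may take $\rho_0(x)=1+|x|^2$ and $\omega_0\equiv1$, since on unit balls the local $\bC^\alpha$ norm of $|x|^2$ is bounded by $C(1+|x|^2)$.

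For $\varphi_2=U(y)$, assumption {\bf (A$^\beta_{\rm SK}$)} only gives $U\in\bC^{1+\beta}_{\rm p}$. This is the one point needing care: we need $\gamma>\beta$, and $\gamma=1+\beta$ may be $\geq2$ when $\beta=1$. I would therefore choose $\gamma:=\min(1+\beta,\,(1+\beta+2)/2)\wedge(2-\delta)$ with $\gamma\in(\beta,2)$. Since $\gamma\leq 1+\beta$, the inclusion $\bC^{1+\beta}_{\omega}\subset\bC^{\gamma}_{\omega}$ on unit balls gives $\varphi_2\in\bC^\alpha_{1}\bC^\gamma_{\omega}$ for some polynomial weight $\omega\in\sW_d$; enlarging $\omega$ if needed absorbs the constant factor.

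With a common choice of weights $(\rho_0,\omega_0)$ dominating both pieces, I would then compute $\bar\varphi(y)=\mu_y(\varphi(\cdot,y))$. By Lemma~\ref{lem:frozen-generalA}, $\mu_y=\sN(0,\Sigma(y))$, so $\mu_y(|x|^2)=\operatorname{tr}\Sigma(y)$ and hence $\bar\varphi=\bar\cE$. In particular $\mE[\bar\varphi(\bar Y_t)]=\mE[\bar\cE(\bar Y_t)]$.

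Theorem~\ref{thm:uniform-SK-generalA} then yields the claimed bound with
\[
C(z):=\varrho(z)\Big(\|\varphi\|_{\bC^\alpha_{\rho_0}\bC^\gamma_{\omega_0}}+\|\varphi-\bar\varphi\|_{\bC^0_{\rho_0}\bC^0_{\omega_0}}\Big),
\]
which is finite. The second norm is finite because $\varphi-\bar\varphi=\frac12(|x|^2-\operatorname{tr}\Sigma(y))$ and $\Sigma$ is bounded: by \eqref{eq:Sigma-integral-generalA} and $\|\e^{-sA(y)}\|\leq\e^{-\kappa_0 s}$ we get $\|\Sigma(y)\|\leq\Lambda/\kappa_0$.

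The main obstacle is purely bookkeeping. The regularity index must satisfy $\gamma\in(\beta,2)$ while not exceeding the $1+\beta$ smoothness of $U$, and the quadratic growth in $x$ must be admissible in the weighted norm. Both points are handled by the choices above, so no new estimate is required.
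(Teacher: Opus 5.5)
Your proposal is correct and follows the paper's own route: the paper obtains the corollary by applying Theorem~\ref{thm:uniform-SK-generalA} to $\varphi(x,y)=\frac12|x|^2+U(y)$ and uses $\mu_y=\sN(0,\Sigma(y))$ to identify $\bar\varphi=\frac12\operatorname{tr}\Sigma+U=\bar\cE$. The paper leaves implicit the checks you supply, namely that $\varphi$ lies in some $\bC^\alpha_{\rho_0}\bC^\gamma_{\omega_0}$ and that $\|\varphi-\bar\varphi\|_{\bC^0_{\rho_0}\bC^0_{\omega_0}}<\infty$ because $\Sigma$ is bounded; your formula for $\gamma$ is needlessly convoluted, since any $\gamma\in(\beta,2)$ with $\gamma\leq 1+\beta$ works.
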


\paragraph{{\bf Physical interpretation:}}
\begin{itemize}
    \item \textbf{Kinetic thermalization (boundary layer):} For very short times $t = \cO(\eps^2)$, the exponential term $\e^{-\kappa_0 t/\eps^2}$ dominates the error. This characterizes the rapid initial transient phase where the fast velocity $X_t^\eps$ dynamically forgets its deterministic initial kinetic energy $\frac{1}{2}\eps^2|v|^2$ and thermalizes toward the local Maxwell--Boltzmann distribution conditioned on $Y_t^\eps$.
    \item \textbf{Uniform long-time tracking:} For macroscopic time $t \gg \eps^2$, the transient boundary layer strictly vanishes. The macroscopic expected energy tracks the effective energy $\bar{\cE}(\bar Y_t)$ uniformly up to $t \to \infty$ with a steady, non-exploding accuracy of $\cO(\eps^\beta)$.
    \item \textbf{Generalized equipartition:} The effective kinetic energy $\frac{1}{2}\operatorname{tr}\bigl(\Sigma(\bar Y_t)\bigr)$ demonstrates that the local friction matrix $A(y)$ inherently regulates the macroscopic kinetic energy via the Lyapunov equation \eqref{eq:Sigma-Lyapunov-generalA}, providing a rigorous justification of the generalized equipartition theorem for state-dependent friction, see e.g. \cite{K} and \cite[Chapter 1-2]{Z}.
\end{itemize}

\subsubsection{Entropy production rate}

For the Langevin dynamics \eqref{eq:SK-second-order-generalA} at unit temperature ($T=1$), the mean rate of entropy production in the medium (heat bath) is given by the average power dissipated by the friction force \cite{ChGa,S05, S12}:
\begin{equation*}
\dot{\cS}^\eps_{\text{med}}(t) := \mE\big[ \langle A(Y_t^\eps)\dot Y_t^\eps, \dot Y_t^\eps \rangle \big].
\end{equation*}
Using the velocity scaling $\dot Y_t^\eps = \eps^{-1} X_t^\eps$, we introduce the rescaled entropy production rate $$
\tilde{\cS}^\eps_{{\text{med}}}(t) := \eps^2 \dot{\cS}^\eps_{\text{med}}(t) = \mE\big[ \langle A(Y_t^\eps)X_t^\eps, X_t^\eps \rangle \big],
$$
which rescales the diverging physical entropy production rate to an $\cO(1)$ quantity that captures the macroscopic dissipation.
As a direct result of Theorem \ref{thm:uniform-SK-generalA}, we have:

\begin{corollary}[Uniform entropy production approximation]
Under the assumptions of Theorem \ref{thm:uniform-SK-generalA},
\begin{equation}\label{eq:epr-uniform}
\left| \tilde{\cS}^\eps_{\text{med}}(t) - \mE\Big[ \operatorname{tr}\bigl((\sigma\sigma^*)(\bar Y_t)\bigr) \Big] \right| \leq \tilde{C}(z)\Big( \eps^\beta + \e^{-\kappa_0 t/\eps^2} \Big), \qquad \forall t \geq 0.
\end{equation}
\end{corollary}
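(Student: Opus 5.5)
The plan is to apply the uniform Smoluchowski--Kramers estimate \eqref{eq:thermo-uniform-bound} (that is, Theorem~\ref{thm:uniform-SK-generalA}) to the observable
\[
\varphi(x,y):=\langle A(y)x,x\rangle,
\]
so that $\tilde{\cS}^\eps_{\text{med}}(t)=\mE[\varphi(Z^\eps_t(z))]$ exactly. Two things are needed: the local-equilibrium average $\bar\varphi$ must be identified as $\operatorname{tr}(\sigma\sigma^*)$, and $\varphi$ must lie in an admissible space $\bC^\alpha_{\rho_0}\bC^\gamma_{\omega_0}$ with some $\gamma\in(\beta,2)$.

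\emph{Identification of $\bar\varphi$.} Since $\mu_y=\sN(0,\Sigma(y))$ by Lemma~\ref{lem:frozen-generalA},
\[
\bar\varphi(y)=\mu_y\big(\langle A(y)x,x\rangle\big)=\operatorname{tr}\big(A(y)\Sigma(y)\big).
\]
Taking the trace of the Lyapunov equation \eqref{eq:Sigma-Lyapunov-generalA} gives
\[
\operatorname{tr}(A\Sigma)+\operatorname{tr}(\Sigma A^*)=2\operatorname{tr}(\sigma\sigma^*).
\]
Because $\operatorname{tr}(\Sigma A^*)=\operatorname{tr}\big((A\Sigma)^*\big)=\operatorname{tr}(A\Sigma)$, it follows that $\bar\varphi(y)=\operatorname{tr}\big((\sigma\sigma^*)(y)\big)$. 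This holds for any friction matrix $A$, symmetric or not, which is the universality claimed in the corollary.

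\emph{Regularity of $\varphi$ (the main obstacle).} In $x$ the function $\varphi$ is a quadratic polynomial, so it is smooth with weight $\rho_0(x)=1+|x|^2$. In $y$, however, $\varphi$ is only as regular as $A\in\bC^{1+\beta}_{\rm p}$, which gives $\gamma=1+\beta$. Theorem~\ref{thm:uniform-SK-generalA} requires $\gamma\in(\beta,2)$, and $1+\beta<2$ holds exactly when $\beta<1$. In that case I take $\gamma=1+\beta$ directly, and the product estimate \eqref{Ho11} gives
\[
\|\varphi\|_{\bC^\alpha_{\rho_0}\bC^{1+\beta}_{\omega_0}}\lesssim \|A\|_{\bC^{1+\beta}_{\rm p}}<\infty.
\]
For $\beta=1$ I choose any $\gamma\in(1,2)$ and use the embedding $\bC^{2}_{\omega}\hookrightarrow\bC^\gamma_\omega$. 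In both cases the admissible range of $\gamma$ is nonempty, and the weights $\rho_0,\omega_0$ can be taken polynomial because $A$ has polynomial growth. The moment bounds from steps (iii)--(iv) of the proof of Theorem~\ref{thm:uniform-SK-generalA} guarantee that $\mE[\varphi(Z^\eps_t)]$ is finite uniformly in $t$, but they enter only through the theorem itself.

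\emph{Conclusion.} Substituting into \eqref{eq:uniform-SK-generalA} gives
\[
\Big|\tilde{\cS}^\eps_{\text{med}}(t)-\mE\big[\operatorname{tr}\big((\sigma\sigma^*)(\bar Y_t)\big)\big]\Big|\le\varrho(z)\Big(\eps^\beta\|\varphi\|+\e^{-\kappa_0t/\eps^2}\|\varphi-\bar\varphi\|\Big).
\]
The norms on the right depend only on $A$ and $\sigma$, and $\|\varphi-\bar\varphi\|_{\bC^0_{\rho_0}\bC^0_{\omega_0}}$ is finite because both $\varphi$ and $\bar\varphi$ have polynomial growth. Setting $\tilde C(z):=\varrho(z)\big(\|\varphi\|+\|\varphi-\bar\varphi\|\big)$ then yields \eqref{eq:epr-uniform}.
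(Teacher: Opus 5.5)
Your proposal is correct and follows the same route as the paper: apply the uniform bound \eqref{eq:thermo-uniform-bound} to $\varphi(x,y)=\langle A(y)x,x\rangle$, compute $\bar\varphi(y)=\operatorname{tr}(A(y)\Sigma(y))$ under the Gaussian $\mu_y$, and take the trace of the Lyapunov equation \eqref{eq:Sigma-Lyapunov-generalA} to obtain $\operatorname{tr}(\sigma\sigma^*)$. You also verify that $\varphi\in\bC^\alpha_{\rho_0}\bC^\gamma_{\omega_0}$ for an admissible $\gamma\in(\beta,2)$, namely $\gamma=1+\beta$ when $\beta<1$ and any $\gamma\in(1,2)$ when $\beta=1$; the paper leaves this step implicit, and your treatment of it is correct.
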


\begin{proof}
Apply \eqref{eq:thermo-uniform-bound} to the quadratic observable $\varphi(x,y)=\langle A(y)x,x\rangle$, the local equilibrium expectation evaluates to
\[
\bar\varphi(y) = \int_{\mR^d} \langle A(y)x, x \rangle \,\mu_y(\dif x) = \operatorname{tr}\bigl(A(y)\Sigma(y)\bigr).
\]
Taking the trace of the Lyapunov equation \eqref{eq:Sigma-Lyapunov-generalA} and using the cyclic property of the trace ($\operatorname{tr}(\Sigma A^*) = \operatorname{tr}(A \Sigma^T) = \operatorname{tr}(A\Sigma)$ since $\Sigma$ is symmetric), we obtain the exact algebraic identity
\begin{equation*}
2\operatorname{tr}\bigl(A(y)\Sigma(y)\bigr) = 2\operatorname{tr}\bigl((\sigma\sigma^*)(y)\bigr).
\end{equation*}
Thus, $\bar\varphi(y) = \operatorname{tr}\bigl((\sigma\sigma^*)(y)\bigr)$.
\end{proof}

\paragraph{{\bf Physical interpretation:}}
\begin{itemize}
    \item The unscaled entropy production rate diverges   as $\dot{\cS}^\eps_{\text{med}}(t) \sim \cO(\eps^{-2})$  for each fixed $t>0$, reflecting the intense thermal dissipation generated by the highly oscillatory fast velocities as mass vanishes.
    \item Strikingly, \eqref{eq:epr-uniform} shows that the macroscopic rescaled dissipation \emph{sheds its dependence on the non-symmetric friction matrix $A(y)$}. After the thermalization layer $t \gg \eps^2$, the steady dissipation is universally determined solely by the noise intensity $\operatorname{tr}(\sigma\sigma^*)$, reflecting a robust energy balance in the homogenized dynamics.
\end{itemize}

\subsubsection{Commutativity of limits and steady-state thermodynamics}

The uniformity in time has a profound mathematical consequence: it guarantees the commutativity of the limits $\eps\to0$ and $t\to\infty$.
Suppose the homogenized diffusion $\bar Y_t$ defined in \eqref{eq:effective-SK-generalA} is ergodic with a unique invariant probability measure $\bar\mu$, and   the original fully coupled system $Z_t^\eps$ admits an invariant measure $\mu_\eps$. Taking the limit $t \to \infty$ in \eqref{eq:energy-uniform}, the exponential boundary layer strictly vanishes, leaving
\begin{equation*}
\left| \int_{\mR^{2d}} \cE_\eps(x,y)\,\mu_\eps(\dif x, \dif y) - \int_{\mR^d} \bar{\cE}(y)\,\bar\mu(\dif y) \right| \leq \cO(\eps^\beta).
\end{equation*}
This establishes the rigorous relation
\[
\lim_{\eps \to 0} \lim_{t \to \infty} \mE[\cE_\eps(t)] = \lim_{t \to \infty} \lim_{\eps \to 0} \mE[\cE_\eps(t)].
\]
Consequently, the steady-state thermodynamics of the fully coupled, highly singular multiscale system can be computed with strict bounds through the geometry of the reduced effective manifold. This provides an uncompromising theoretical justification for using macroscopic homogenized equations to predict long-time physical steady states.

\subsubsection{Asymptotic Gibbs--Shannon entropy and effective free energy}

The uniform-in-time  framework also provides deep insights into the structural renormalization of information. Let $\rho_\eps(t, x, y)$ denote the joint probability density of $(X_t^\eps, Y_t^\eps)$. The existence and regularity of such densities for degenerate Langevin systems have been extensively studied, we refer to \cite{CM,DM,ZZ}.
The Gibbs--Shannon information entropy is defined as $\cH_\eps(t) := -\mE\big[ \log \rho_\eps(t, X_t^\eps, Y_t^\eps) \big]$.

\paragraph{{\bf A formal free-energy discussion.}} While $-\log \rho_\eps$ is a density-dependent observable and therefore does not strictly fall within the polynomially weighted test-function class of Theorem~\ref{thm:uniform-SK-generalA}, the uniform weak convergence established above strongly suggests the following asymptotic factorization of the density away from the initial boundary layer:
\[
\rho_\eps(t,x,y) \approx \mu_y(x)\,\bar\rho(t,y),\qquad t\gg\eps^2,
\]
 where $\bar\rho(t, y)$ is the marginal density of the effective slow process $\bar Y_t$.
Under this factorization, the joint entropy formally splits into the marginal entropy of the slow variable $\cH_{\bar Y}(t) := -\mE[\log \bar\rho(t, \bar Y_t)]$ and the conditional entropy of the frozen Gaussian fast variable $\mu_y = \sN(0, \Sigma(y))$. Calculating the exact differential entropy of the multidimensional Gaussian yields the asymptotic structural limit:
\begin{equation}\label{eq:entropy-limit}
\cH_\eps(t) \approx \cH_{\bar Y}(t) + \mE\left[ \frac{d}{2}\log(2\pi \mathrm{e}) + \frac{1}{2}\log\det\bigl(\Sigma(\bar Y_t)\bigr) \right].
\end{equation}

Defining the mean free energy as $\cF_\eps(t) := \mE[\cE_\eps(t)] - \cH_\eps(t)$ (see, e.g., \cite{Ja,Z}), we combine the asymptotic expression \eqref{eq:entropy-limit} with the uniform energy approximation to find
\begin{align*}
\cF_\eps(t) &\approx \mE\left[ \frac{1}{2}\operatorname{tr}\bigl(\Sigma(\bar Y_t)\bigr) + U(\bar Y_t) \right] - \cH_{\bar Y}(t) - \mE\left[ \frac{d}{2}\log(2\pi \mathrm{e}) + \frac{1}{2}\log\det\bigl(\Sigma(\bar Y_t)\bigr) \right] \\
&= \cF_{\bar Y}(t) + \frac{1}{2}\mE\left[ \operatorname{tr}\bigl(\Sigma(\bar Y_t)\bigr) - \log\det\bigl(\Sigma(\bar Y_t)\bigr) - d \right] - \frac{d}{2}\log(2\pi),
\end{align*}
where $\cF_{\bar Y}(t) := \mE[U(\bar Y_t)] - \cH_{\bar Y}(t)$ is the standard macroscopic free energy of the reduced slow dynamics.

\vspace{2mm}
\paragraph{\bf Physical interpretation (Kullback--Leibler renormalization):}
Notice that the explicitly emergent penalty term in the free energy,
\[
\frac{1}{2}\Big(\operatorname{tr}\bigl(\Sigma(\bar Y_t)\bigr) - \log\det\bigl(\Sigma(\bar Y_t)\bigr) - d\Big),
\]
is exactly the Kullback--Leibler divergence $D_{\mathrm{KL}}\big(\sN(0,\Sigma(\bar Y_t)) \parallel \sN(0,I_d)\big) \geq 0$. This reveals a deep principle: during   adiabatic elimination, the statistical information of the fast degrees of freedom is not lost, but systematically renormalized into an emergent thermodynamic potential landscape characterizing the reduced macroscopic system globally in time.

\section{Periodic homogenization for SDEs with distributional drifts}

In this section, we present an application of our abstract uniform-in-time diffusion-approximation framework to periodic homogenization. The key novelty of our result lies in that the highly oscillatory fast drift is allowed to be \emph{distributional} (having strictly negative H\"older regularity), yet we rigorously extract an \emph{explicit} convergence rate that is \emph{uniform in time} for suitable macroscopic observables.

We first establish the geometric notation. Let $\mT^d:=\mR^d/\mZ^d$ denote the $d$-dimensional flat torus. For $x\in\mR^d$, we denote its equivalence class by
\[
[x]:=x \pmod{\mZ^d}
\quad\Longleftrightarrow\quad
[x]-x\in\mZ^d .
\]
Given a function $f:\mT^d\to\mR$, we canonically extend it to $\mR^d$ by periodicity:
\begin{equation*}
f(x):=f([x]),\qquad x\in\mR^d .
\end{equation*}
For $\alpha\in\mR$, we denote by $\bC^\alpha(\mT^d)$ the standard H\"older--Besov spaces defined via the Littlewood--Paley decomposition (see, e.g., \cite{BCD11}).

\subsection{Periodic homogenization with H\"older coefficients}\label{subsec:holder-homo}

Before tackling the singular distributional regime, we first consider the  setting where the coefficients are H\"older continuous.  This will serve as the foundation  for the more general result. Let $\varphi\in L^\infty(\mT^d)$ be a bounded, measurable, $1$-periodic observable. Consider the Cauchy problem
\begin{equation}\label{eq:ue}
\left\{
\begin{aligned}
&\partial_t u_\eps(t,x)=\sL_\eps u_\eps(t,x),\qquad (t,x)\in \mR_+\times\mR^d,\\
&u_\eps(0,x)=\varphi(x),
\end{aligned}
\right.
\end{equation}
with $\sL_\eps$ defined by
\begin{equation*}
\sL_\eps f(x)
:=
a\!\left(\frac{x}{\eps}\right):\nabla_x^2 f(x)
+\Big[\eps^{-1}b+c\Big]\!\left(\frac{x}{\eps}\right)\cdot\nabla_x f(x),
\qquad a(x) := \big(\sigma\sigma^*\big)(x).
\end{equation*}
We impose the following structural assumptions on the periodic coefficients:

\vspace{2mm}
\begin{enumerate}[{\bf(A$^\alpha$)}]
\item
Let $\alpha\in(0,1)$. Assume that $\sigma,b,c\in \bC^\alpha(\mT^d)$ are $1$-periodic in each spatial coordinate. Furthermore, there exist constants $0<\lambda\leq \Lambda<\infty$ such that
\begin{equation*}
\lambda|\xi|^2\leq |\sigma^*(x)\xi|^2\leq \Lambda|\xi|^2,
\qquad \forall x\in\mT^d,\ \forall \xi\in\mR^d.
\end{equation*}
\end{enumerate}

\vspace{2mm}
Let $(X_t^\eps)_{t\geq0}$ be the unique weak solution to the associated SDE:
\begin{equation*}
\dif X_t^\eps
=\Big[\eps^{-1}b+c\Big]\!\left(\frac{X_t^\eps}{\eps}\right)\dif t
+\sqrt2\,\sigma\!\left(\frac{X_t^\eps}{\eps}\right)\dif W_t,
\qquad X_0^\eps=x\in\mR^d.
\end{equation*}
Then $\sL_\eps$ is the generator of $X^\eps_t$, and the solution to \eqref{eq:ue} admits the classical Feynman--Kac representation:
\begin{equation*}
u_\eps(t,x)=\mE\big[\varphi(X_t^\eps(x))\big].
\end{equation*}
Consequently, studying the limiting behavior of $u_\eps$ as $\eps\to0$ is equivalent to establishing the weak convergence of the stochastic process $X_t^\eps(x)$.

\subsubsection{Cell problem and uniform  homogenization}
Consider the auxiliary unperturbed fast SDE constrained to the torus $\mT^d$:
\begin{equation}\label{SDE9}
\dif X_t=b(X_t)\,\dif t+\sqrt2\,\sigma(X_t)\,\dif W_t,\qquad X_0=x,
\end{equation}
whose generator is given by
\begin{equation*}
\sL_0 f(x):=a(x):\nabla_x^2 f(x)+b(x)\cdot\nabla_x f(x),\qquad x\in\mT^d.
\end{equation*}
Under Assumption \textbf{(A$^\alpha$)}, the diffusion $X_t$ associated with $\sL_0$ admits a unique invariant probability measure $\mu(\dif x)$ on $\mT^d$ and exhibits  exponential ergodicity: there exist constants $C,\kappa>0$ such that
\begin{equation}\label{eq:exp-erg}
\big|\mE\big[\psi(X_t(x))\big]-\mu(\psi)\big|\leq C\e^{-\kappa t}\|\psi\|_\infty,
\quad \forall t>0,\ \forall x\in\mT^d,\ \forall \psi\in L^\infty(\mT^d).
\end{equation}

Define the averaged drift $\bar b$ and the centered fast drift $\widetilde b$ by
\begin{equation*}
\bar b:=\int_{\mT^d} b(x)\,\mu(\dif x)\in\mR^d,
\qquad \widetilde b(x):=b(x)-\bar b.
\end{equation*}
Let $\Phi:\mT^d\to\mR^d$ be the unique zero-mean solution to the Poisson equation (also known as the cell problem in this setting):
\begin{equation*}
\sL_0\Phi(x)= -\widetilde b(x),
\qquad \int_{\mT^d}\Phi(x)\,\mu(\dif x)=0.
\end{equation*}
Standard Schauder estimates on the torus guarantee that $\Phi\in\bC^{2+\alpha}(\mT^d;\mR^d)$, with norms depending only on $d,\alpha,\lambda,\Lambda$ and the H\"older norms $\|a\|_{\bC^\alpha}, \|b\|_{\bC^\alpha}$.
Denote by $\nabla_x\Phi(x)$ denote the Jacobian matrix of the corrector and set
\begin{equation*}
K(x):=I_d+\nabla_x\Phi(x)\in\mR^{d\times d}.
\end{equation*}
The effective macroscopic coefficients are then given by
\begin{equation}\label{eq:eff-coeff}
\cF:=\int_{\mT^d} K(x)c(x)\,\mu(\dif x)\in\mR^d,
\qquad
\cG:=\int_{\mT^d} K(x)a(x)K(x)^*\,\mu(\dif x)\in\mR^{d\times d}.
\end{equation}
By uniform ellipticity, $\cG$ is symmetric and strictly positive definite.

\begin{remark}
The representation \eqref{eq:eff-coeff} provides an algebraically robust form of the effective diffusion matrix. Under additional smoothness assumptions, $\cG$ can be rewritten via integration by parts into several equivalent Eulerian forms (see, e.g., \cite{Bh85,Par}). For our purposes, \eqref{eq:eff-coeff} arises organically and requires no supplementary regularity.
\end{remark}


To extract the macroscopic behavior, we define the centered slow variable:
\begin{equation}\label{eq:Ye}
Y_t^\eps:=X_t^\eps-\frac{\bar b}{\eps}t.
\end{equation}
The following result shows that $Y_t^\eps$ converges uniformly in time to an effective Brownian motion with drift, with an explicit convergence rate of order $\eps$.

\begin{theorem}[Uniform-in-time periodic homogenization]\label{thm:holder-homo}
Assume {\bf (A$^\alpha$)} and let $\gamma>1$. There exists a constant $C>0$ (depending only on $d,\alpha,\gamma,\lambda,\Lambda$ and the $\bC^\alpha$-norms of $b,c,\sigma$) such that for any observable $\varphi\in \bC^\gamma(\mT^d)$, extended periodically,
\begin{equation}\label{eq:unif-homo}
\sup_{t\geq0}\sup_{x\in\mR^d}
\Big|
\mE\big[\varphi(Y_t^\eps(x))\big]
-
\mE\big[\varphi(x+\cF t+\sqrt{2\cG}W_t)\big]
\Big|
\leq
C\,\eps\|\varphi\|_{\bC^\gamma(\mT^d)}.
\end{equation}
\end{theorem}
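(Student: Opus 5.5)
The plan is to recast the periodic problem as an instance of \eqref{sde0} and read \eqref{eq:unif-homo} off Theorem~\ref{main} with $\beta=1$.

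\emph{Step 1: the reduction.} Introduce the fast variable $\widetilde X_t^\eps:=[X^\eps_t/\eps]\in\mT^d$ and the slow variable $Y^\eps_t$ from \eqref{eq:Ye}. It\^o's formula gives
\begin{align*}
\dif \widetilde X_t^\eps&=\eps^{-2}b(\widetilde X^\eps_t)\dif t+\eps^{-1}c(\widetilde X^\eps_t)\dif t+\sqrt2\,\eps^{-1}\sigma(\widetilde X^\eps_t)\dif W_t,\\
\dif Y^\eps_t&=c(\widetilde X^\eps_t)\dif t+\eps^{-1}\widetilde b(\widetilde X^\eps_t)\dif t+\sqrt2\,\sigma(\widetilde X^\eps_t)\dif W_t .
\end{align*}
This is exactly \eqref{sde0} with the identifications
\[
(b,c,\sigma)\mapsto(b,c,\sigma),\qquad F=c,\qquad H=\widetilde b,\qquad G=\sigma .
\]
Here all coefficients are independent of the slow variable $y$, and the initial fast state is $[x/\eps]$.

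Since $\varphi$ is $1$-periodic, only $[Y^\eps_t]$ matters. I would therefore regard the slow variable as living on $\mT^d$ as well, so that both components live on compact state spaces.

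\emph{Step 2: verifying the hypotheses.}
\begin{itemize}
\item Since the coefficients do not depend on $y$, the regularity condition \eqref{Red1} holds with $\beta=1$ and any $\alpha$ from {\bf (A$^\alpha$)}.
\item {\bf (H$_0$)} is trivial on a compact space: all weights are bounded.
\item {\bf (H$_1$)}: part (a) is the ellipticity in {\bf (A$^\alpha$)}, part (b) is \eqref{eq:exp-erg} with $\ell_0(t)=C\e^{-\kappa t}$ and $\mu_y\equiv\mu$, and the centering (c) holds by definition of $\widetilde b$.
\item The corrector \eqref{pde10} is the cell solution $\Phi(x)$, independent of $y$. Hence $\nabla_y\Phi=0$ and $\Gamma_1=(\nabla\Phi)c$, so that $\cF=\mu((I+\nabla\Phi)c)=\mu(Kc)$.
\item For the diffusion coefficient, $\cG=\mu\big(a+\mathrm{sym}(\widetilde b\otimes\Phi+2a\nabla\Phi^*)\big)$. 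The identity \eqref{KeyIdentityMu} from Lemma~\ref{Le42}, polarized, gives $\mu(\mathrm{sym}(\widetilde b\otimes\Phi))=\mu(\nabla\Phi\, a\,\nabla\Phi^*)$. Therefore $\cG=\mu(KaK^*)$, matching \eqref{eq:eff-coeff}.
\item {\bf (H$_2$)}: the homogenized process $\bar Y_t=x+\cF t+\sqrt{2\cG}\,\bar W_t$ has constant coefficients, and $\cG>0$. Its projection to $\mT^d$ is a nondegenerate Brownian motion with drift on the torus, which is exponentially ergodic toward Lebesgue measure.
\end{itemize}

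\emph{Step 3: conclusion.} Since $\varphi$ depends only on the slow variable, $\bar\varphi=\varphi$ and the initial-layer term in \eqref{XXX} vanishes. Theorem~\ref{main} (equivalently Theorem~\ref{thm:slow-variable}) with $\beta=1$ and $\gamma\in(1,2)$ then yields the rate $\eps$. Compactness makes $\varrho(z)$ a constant, giving the supremum over $x$. For $\gamma\ge2$, the bound follows from the case $\gamma<2$, since $\|\varphi\|_{\bC^{\gamma'}}\lesssim\|\varphi\|_{\bC^{\gamma}}$ for $\gamma'<\gamma$.

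\emph{Main obstacle.} Theorem~\ref{main} is formulated on $\mR^d\times\mR^\vartheta$ with polynomial weights, whereas here both variables must be taken on tori. On $\mR^d$ the fast periodic process has no invariant probability measure, and the slow Brownian motion is not ergodic. I would first check that every ingredient of the proof transfers verbatim to $\mT^d\times\mT^d$ with trivial weights. These ingredients are the Poisson estimates (Lemma~\ref{lem:XXZ-u-est} and Theorem~\ref{p1}), the semigroup decay \eqref{Erg121} and \eqref{ny3}, and the mollification arguments, which can be done on periodic functions on $\mR^d$. On the torus the relevant Schauder theory is classical and strictly simpler. Alternatively, one can work on $\mR^d$ with periodic test functions, noting that every function appearing in the proof stays periodic.
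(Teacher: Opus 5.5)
Your proposal is correct and follows the paper's own proof. Both use the rescaling $\widetilde X^\eps_t=X^\eps_t/\eps$ and the identification $H=\widetilde b$, $F=c$, $G=\sigma$, appeal directly to Theorem~\ref{main} with $\beta=1$, and reduce $\cG$ to $\mu(KaK^*)$ via the identity coming from $\sL_0(\Phi\Phi^*)$ (equivalently, your polarized \eqref{KeyIdentityMu}). Your explicit projection of the slow variable onto $\mT^d$ usefully sharpens the paper's brief ``compact state space'' remark: on $\mR^d$ the limit $x+\cF t+\sqrt{2\cG}W_t$ is not ergodic and $Y^\eps_t$ has no uniform-in-time moment bounds, so {\bf (H$_0$)} and {\bf (H$_2$)}(c) would fail there.
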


\begin{remark}
Recall that $u_\eps(t,x)=\mE[\varphi(X_t^\eps(x))]$ solves the Cauchy problem (\ref{eq:ue}). Theorem \ref{thm:holder-homo} states that $u_\eps(t,x)$ satisfies, after shifting by the average drift $\bar b t/\eps$,
\[
\sup_{t\geq0}\sup_{x\in\mR^d} \big| u_\eps(t,x+\bar b t/\eps) - \bar u(t,x) \big| \leq C\eps \|\varphi\|_{\bC^\gamma(\mT^d)},
\]
where $\bar u$ solves the constant-coefficient  homogenized PDE
\[
\partial_t \bar u(t,x) = \cF\cdot\nabla_x \bar u(t,x) + \cG:\nabla_x^2 \bar u(t,x), \qquad \bar u(0,x)=\varphi(x).
\]
Thus, the highly oscillatory operator $\sL_\eps$ homogenizes to the constant-coefficient operator $\bar\sL = \cF\cdot\nabla_x + \cG:\nabla_x^2$, with an explicit $\cO(\eps)$ convergence rate uniform in time.
\end{remark}

\begin{proof}
Let us introduce the highly oscillatory rescaled fast variable $\widetilde X_t^\eps := \eps^{-1} X_t^\eps$. Then the pair $(\widetilde X_t^\eps, Y_t^\eps)$ exactly satisfies the coupled  system
\begin{equation*}
\left\{
\begin{aligned}
\dif \widetilde X_t^\eps
&=\eps^{-2}b(\widetilde X_t^\eps)\,\dif t+\eps^{-1}c(\widetilde X_t^\eps)\,\dif t+\eps^{-1}\sqrt2\,\sigma(\widetilde X_t^\eps)\,\dif W_t,\qquad \widetilde X_0^\eps=x/\eps,\\
\dif Y_t^\eps
&=c(\widetilde X_t^\eps)\,\dif t+\eps^{-1}\widetilde b(\widetilde X_t^\eps)\,\dif t+\sqrt2\,\sigma(\widetilde X_t^\eps)\,\dif W_t,\qquad\quad\qquad Y_0^\eps=x.
\end{aligned}
\right.
\end{equation*}
This corresponds to the general model (\ref{sde0}) with
\[
H(x,y)=\widetilde b(x),\qquad F(x,y)=c(x),\qquad G(x,y)=\sigma(x).
\]
Since all coefficients are uniformly bounded and H\"older continuous on the compact state space $\mT^d$, the hypotheses of Theorem \ref{main} are satisfied. This yields the uniform convergence estimate \eqref{eq:unif-homo} with rate $\eps$ (since $\beta=1$ in this  setting), the  effective drift
\[
\cF=\int_{\mT^d} \big(I_d+\nabla_x\Phi(x)\big)c(x) \,\mu(\dif x) = \int_{\mT^d} K(x)c(x)\,\mu(\dif x),
\]
and the  effective covariance matrix
\[
\cG=\int_{\mT^d} \left( a+a(\nabla_x\Phi)^*+(\nabla_x\Phi)a+\frac{1}{2}\big(\widetilde b\,\Phi^* + \Phi\,\widetilde b^*\big) \right) \mu(\dif x).
\]
We now show that this expression reduces to  \eqref{eq:eff-coeff}. Applying the generator $\sL_0$ to  $\Phi \Phi^*$ and using $\sL_0\Phi=-\widetilde b$ yields
\begin{align*}
\sL_0(\Phi\Phi^*)&=(\sL_0\Phi)\Phi^*+\Phi(\sL_0\Phi)^*+2(\nabla_x\Phi) a (\nabla_x\Phi)^*\\
&= -\widetilde b \,\Phi^* - \Phi\,\widetilde b^* + 2 (\nabla_x\Phi) a (\nabla_x\Phi)^*.
\end{align*}
Integrating with respect to the invariant measure $\mu$ and noting that $\int_{\mT^d} \sL_0 f \dif \mu = 0$ for any regular function $f$, we extract the exact cross-term relation:
\[
\frac{1}{2}\int_{\mT^d} \big(\widetilde b\,\Phi^* + \Phi\,\widetilde b^*\big) \,\mu(\dif x) = \int_{\mT^d} (\nabla_x\Phi) a (\nabla_x\Phi)^* \,\mu(\dif x),
\]
which simplifies $\cG$ to the desired form \eqref{eq:eff-coeff}.
\end{proof}

\subsubsection{A functional CLT}

Theorem \ref{thm:holder-homo} provides uniform-in-time estimates for one-dimensional distributions, but it does not capture the full path structure. We next strengthen this to weak convergence in the Skorokhod space $C(\mR_+;\mR^d)$, thereby establishing a functional central limit theorem for the centered process $Y_t^\eps$.

Let $\mE^0$ denote the expectation with respect to the law of the unperturbed diffusion $X_t$ defined in \eqref{SDE9}. We first establish a standard 4-point correlation bound for the corresponding  Markov semigroup $\cT_t$.

\begin{lemma}[Four-point correlation bound]\label{lem:mixing}
Let $f\in L^\infty(\mT^d)$ be centered such that $\mu(f)=0$. For any ordered times $0 \leq r_1 \leq r_2 \leq r_3 \leq r_4$, the unperturbed diffusion $X$ satisfies
\begin{equation*}
\big|\mE^0\big[f(X_{r_1}) f(X_{r_2}) f(X_{r_3}) f(X_{r_4})\big]\big|
\leq C \|f\|_\infty^4\,\e^{-\kappa(r_2-r_1)}\e^{-\kappa(r_4-r_3)},
\end{equation*}
where $\kappa>0$ is the spectral gap constant from \eqref{eq:exp-erg}.
\end{lemma}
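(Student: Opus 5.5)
We will prove the bound by conditioning twice with the Markov property and using exponential ergodicity \eqref{eq:exp-erg} on the two ``outer'' gaps. Write $\cT_t\psi(x)=\mE^0_x[\psi(X_t)]$ and run the process from an arbitrary initial point (or from $\mu$; the argument is the same). We begin with the last gap. Conditioning on $\cF_{r_3}$ gives
\[
\mE^0\big[f(X_{r_1})f(X_{r_2})f(X_{r_3})f(X_{r_4})\big]
=\mE^0\big[f(X_{r_1})f(X_{r_2})\,g(X_{r_3})\big],\qquad g:=f\cdot\cT_{r_4-r_3}f .
\]
Because $\mu(f)=0$, \eqref{eq:exp-erg} yields $\|\cT_{r_4-r_3}f\|_\infty\le C\e^{-\kappa(r_4-r_3)}\|f\|_\infty$. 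Consequently $\|g\|_\infty\le C\e^{-\kappa(r_4-r_3)}\|f\|_\infty^2$. Note, however, that $g$ itself is \emph{not} centered.

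Next we handle the first gap. Conditioning on $\cF_{r_2}$ reduces the left-hand side to $\mE^0[f(X_{r_1})\,k(X_{r_2})]$, where $k:=f\cdot\cT_{r_3-r_2}g$ and $\|k\|_\infty\le C\e^{-\kappa(r_4-r_3)}\|f\|_\infty^3$. We then condition on $\cF_{r_1}$ to obtain $\mE^0[f(X_{r_1})\,\cT_{r_2-r_1}k(X_{r_1})]$. Split $\cT_{r_2-r_1}k=\mu(k)+(\cT_{r_2-r_1}k-\mu(k))$. By \eqref{eq:exp-erg}, the second piece has sup norm at most $C\e^{-\kappa(r_2-r_1)}\|k\|_\infty$. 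So this piece contributes at most $C\|f\|_\infty^4\e^{-\kappa(r_2-r_1)}\e^{-\kappa(r_4-r_3)}$, as desired.

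The main obstacle is the constant piece, $\mu(k)\,\mE^0[f(X_{r_1})]$, because $k$ need not be centered and $\mu(k)$ carries no decay in $r_2-r_1$. Under stationarity (initial law $\mu$), this term vanishes since $\mE^0[f(X_{r_1})]=\mu(f)=0$. For a general starting point $x$, it is bounded by $C\e^{-\kappa r_1}\|f\|_\infty|\mu(k)|$, and $\e^{-\kappa r_1}\le\e^{-\kappa(r_2-r_1)}$ fails in general. We will therefore state and use the lemma under stationary initial law, which is the form needed for the variance and tightness computations in the FCLT. Alternatively, for the non-stationary case we would absorb this term as an additional term $\e^{-\kappa r_1}\e^{-\kappa(r_4-r_3)}$, which is harmless in the Kolmogorov tightness estimate after integrating over the ordered times. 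In either case all constants depend only on the constant $C$ in \eqref{eq:exp-erg}.
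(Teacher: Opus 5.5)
Your argument is correct and is essentially the paper's proof. Both reduce, via the Markov property, to $\mE^0[f(X_{r_1})(f\Psi)(X_{r_2})]$ with $\Psi=\cT_{r_3-r_2}(f\,\cT_{r_4-r_3}f)$, bound $\|\Psi\|_\infty\le C\e^{-\kappa(r_4-r_3)}\|f\|_\infty^2$, split off $\mu(f\Psi)$, and kill it using $\mu(f)=0$ under the stationary initial law.

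Your point that this last step genuinely needs stationarity sharpens the paper, which assumes it tacitly even though the functional CLT applies the bound to the reference process started at $x/\eps$. In that non-stationary setting your extra term $C\|f\|_\infty^4\e^{-\kappa r_1}\e^{-\kappa(r_4-r_3)}$ is exactly what is needed, and it still integrates to $O((S_1-S_0)^2)$ over the ordered simplex.
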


\begin{proof}
By the Markov property, conditioning on $\cF_{r_2}$ yields
\[
\mE^0\big[f(X_{r_3}) f(X_{r_4}) \mid \cF_{r_2}\big] = \cT_{r_3-r_2}\big(f \, \cT_{r_4-r_3}f\big)(X_{r_2}) =: \Psi(X_{r_2}).
\]
Because $\mu(f)=0$, the ergodicity bound \eqref{eq:exp-erg} implies $\|\cT_{r_4-r_3}f\|_\infty \leq C\e^{-\kappa(r_4-r_3)}\|f\|_\infty$. Since $\cT_{r_3-r_2}$ acts as a contraction in $L^\infty$, we obtain the estimate
\[
\|\Psi\|_\infty \leq \big\|f \, \cT_{r_4-r_3}f\big\|_\infty \leq C\e^{-\kappa(r_4-r_3)}\|f\|_\infty^2 .
\]
We now evaluate the full expectation. Since $X$ is strictly stationary under the invariant measure $\mu$, we have
\begin{align*}
\mE^0\big[f(X_{r_1}) f(X_{r_2}) \Psi(X_{r_2})\big]
&= \int_{\mT^d} f(x) \cT_{r_2-r_1}(f\Psi)(x)\,\mu(\dif x) \\
&= \int_{\mT^d} f(x) \Big[ \cT_{r_2-r_1}(f\Psi)(x) - \mu(f\Psi) \Big] \,\mu(\dif x) + \mu(f)\mu(f\Psi).
\end{align*}
Because $\mu(f) = 0$, the second term identically vanishes. Applying \eqref{eq:exp-erg} to the centered function $f\Psi - \mu(f\Psi)$, the remaining term is structurally bounded by
\[
\|f\|_\infty \big\|\cT_{r_2-r_1}\big(f\Psi - \mu(f\Psi)\big)\big\|_\infty \leq 2C \|f\|_\infty \e^{-\kappa(r_2-r_1)} \|f\Psi\|_\infty.
\]
Using the triangle inequality bound $\|f\Psi\|_\infty \leq \|f\|_\infty \|\Psi\|_\infty \leq C \|f\|_\infty^3 \e^{-\kappa(r_4-r_3)}$, we immediately conclude:
\[
\big|\mE^0\big[f(X_{r_1}) f(X_{r_2}) f(X_{r_3}) f(X_{r_4})\big]\big| \leq C \|f\|_\infty^4 \e^{-\kappa(r_2-r_1)}\e^{-\kappa(r_4-r_3)},
\]
completing the proof.
\end{proof}

The following result strengthens Theorem \ref{thm:holder-homo} by establishing weak convergence of the entire path of $Y_t^\eps$, which is essential for applications that depend on the full trajectory, such as the analysis of hitting times, occupation measures, and other path-dependent functionals, see e.g. \cite{CS}.

\begin{theorem}[Functional CLT]
Assume {\bf (A$^\alpha$)}. Then the centered process $(Y_t^\eps)_{t\geq0}$ defined in \eqref{eq:Ye} converges weakly in the Skorokhod space $C(\mR_+;\mR^d)$ to the effective Brownian motion with drift $(x+\cF t+\sqrt{2\cG}W_t)_{t\geq0}$.
\end{theorem}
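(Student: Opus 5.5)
The proof follows the standard two-step scheme: convergence of finite-dimensional distributions plus tightness in $C(\mR_+;\mR^d)$.

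\emph{Step 1: finite-dimensional distributions.} Theorem~\ref{thm:holder-homo} gives one-dimensional marginals uniformly in time. To reach several times $0\leq t_1<\dots<t_k$, we use the Markov property of the pair $(\widetilde X^\eps_t,Y^\eps_t)$ and induct on $k$. For test functions $\varphi_1,\dots,\varphi_k\in\bC^\gamma(\mT^d)$, we condition on $\cF_{t_{k-1}}$ and write
\[
\mE\big[\varphi_k(Y^\eps_{t_k})\,\big|\,\cF_{t_{k-1}}\big]=v_\eps\big(t_k-t_{k-1},\widetilde X^\eps_{t_{k-1}},Y^\eps_{t_{k-1}}\big),
\]
where $v_\eps$ is the one-time expectation for the coupled system started from a general point $(\tilde x,y)$. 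Theorem~\ref{main}, applied in its general form with arbitrary initial fast state (weights are bounded on the torus), shows that
\[
\big|v_\eps(s,\tilde x,y)-\bar\cT_s\varphi_k(y)\big|\leq C\eps\|\varphi_k\|_{\bC^\gamma}
\]
uniformly in $(\tilde x,y)$. Since $\bar\cT_s\varphi_k$ is again smooth and periodic, with $\bC^\gamma$-norm controlled, the induction closes. The resulting limit is the joint law of $x+\cF t+\sqrt{2\cG}W_t$ at the times $t_1,\dots,t_k$. Periodic observables do not separate laws on $\mR^d$, so this step has to be handled with some care. One option is to test with functions $\varphi\in C_c^\infty(\mR^d)$: using the bounded-coefficient structure, the same argument shows that Theorem~\ref{main} holds for non-periodic $\bC^\gamma$ observables in $y$ with bounded weights. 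Alternatively, one can use characteristic functions $\e^{i\langle\xi,y\rangle}$, which are smooth and bounded in $y$, so the abstract theorem applies directly.

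\emph{Step 2: tightness.} This is where Lemma~\ref{lem:mixing} enters. We apply the corrector decomposition via It\^o's formula to $\Phi(\widetilde X^\eps_t)$, with $\Phi\in\bC^{2+\alpha}(\mT^d)$, which gives
\[
Y^\eps_t=x+\int_0^t K c(\widetilde X^\eps_s)\,\dif s+\sqrt2\int_0^t K\sigma(\widetilde X^\eps_s)\,\dif W_s-\eps\big(\Phi(\widetilde X^\eps_t)-\Phi(\widetilde X^\eps_0)\big).
\]
The first integral is Lipschitz in $t$ with a deterministic bound. The stochastic integral has bounded integrand, so BDG gives
\[
\mE\Big|\int_s^t K\sigma(\widetilde X^\eps_r)\,\dif W_r\Big|^4\leq C|t-s|^2.
\]
Kolmogorov's criterion then yields tightness of both terms. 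The boundary term $\eps(\Phi(\widetilde X^\eps_t)-\Phi(\widetilde X^\eps_0))$ is uniformly $O(\eps)$ in sup norm and so does not affect the limit. Consequently $Y^\eps$ is tight in $C(\mR_+;\mR^d)$, and together with Step 1 this identifies the limit. In this route the four-point bound is only needed if one instead controls $\eps^{-1}\int_s^t\widetilde b(\widetilde X^\eps_r)\,\dif r$ directly. After the time change $r\mapsto r/\eps^2$, its fourth moment is $\eps^{-4}\cdot\eps^8$ times a four-fold integral over an interval of length $(t-s)/\eps^2$, which Lemma~\ref{lem:mixing} bounds by $C((t-s)/\eps^2)^2$. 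The result is $C|t-s|^2$, and this gives the same conclusion.

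\emph{Main obstacle.} The delicate point is Step 1. Multi-time convergence needs the one-time estimate uniformly over the initial fast state $\tilde x$, and it needs observables on $\mR^d$ rather than on $\mT^d$. I would address both by rerunning the proof of Theorem~\ref{main} with $y$-observables in $\bC^\gamma_b(\mR^d)$. The periodicity in $x$ keeps all weights bounded, and the $\ell_0$ boundary term is harmless because only $y$-dependent observables are used, so that $\varphi-\bar\varphi=0$.
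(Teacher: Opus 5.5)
Your proof is correct, but the tightness step takes a different route from the paper. The paper attacks the singular term $\eps^{-1}\int_s^t\widetilde b(\widetilde X^\eps_r)\,\dif r$ directly. After the time change it removes the $\eps c$ drift by Girsanov; the density has bounded fourth moment because $\eps^2S_1=t\leq T$. It then bounds the fourth moment under the unperturbed diffusion by the four-point correlation estimate of Lemma~\ref{lem:mixing}, and concludes with Kolmogorov's criterion using third moments.

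You instead apply It\^o's formula to the bounded corrector $\Phi\in\bC^{2+\alpha}(\mT^d)$. This writes $Y^\eps$ as a Lipschitz drift, plus a martingale with bounded integrand, plus a boundary term that is $O(\eps)$ in sup norm. BDG then gives tightness with no change of measure, no mixing estimate and no appeal to stationarity. (The proof of Lemma~\ref{lem:mixing} integrates against $\mu$ even though the process starts from a point.) Your route is shorter and more robust. It would even let you identify the limit directly through the martingale problem, since by ergodicity $\int_0^t Kc(\widetilde X^\eps_s)\,\dif s\to t\cF$ and $\int_0^t KaK^*(\widetilde X^\eps_s)\,\dif s\to t\cG$. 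The paper's route, by contrast, uses only boundedness of $\widetilde b$ and the exponential mixing rate.

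For the finite-dimensional distributions, the paper only cites \eqref{eq:unif-homo} and the Markov property. Your induction on the pair $(\widetilde X^\eps,Y^\eps)$, with the one-time bound made uniform in the initial fast state, fills in that step. You are also right that $1$-periodic observables do not separate laws on $\mR^d$, a point the paper passes over.

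There are two small corrections.

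First, Theorem~\ref{main} does not apply ``directly'' to $\e^{i\langle\xi,y\rangle}$ on $\mR^d$. Both \textbf{(H$_0$)} and \textbf{(H$_2$)}(c) fail there: the limit is a non-ergodic Brownian motion with drift, and $\mE|Y^\eps_t|^2$ grows linearly in $t$. What you need, and all that fdd convergence requires, is the finite-horizon variant noted after Theorem~\ref{main}. That variant holds for $\bC^\gamma_b(\mR^d)$ observables uniformly in $(\tilde x,y)$, because the coefficients are periodic in $\tilde x$ and independent of $y$, so $Y^\eps_t(\tilde x,y)=y+Y^\eps_t(\tilde x,0)$.

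Second, your aside applying Lemma~\ref{lem:mixing} to $\widehat X^\eps$ ignores the extra drift $\eps c$ and the non-stationary start, which is exactly why the paper needs Girsanov. Your main argument does not rely on that aside.
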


\begin{proof}
By the uniform convergence \eqref{eq:unif-homo} and standard applications of the Markov property, the finite-dimensional distributions of $(Y_t^\eps)_{t\geq0}$ converge to those of the limiting macroscopic process. It therefore remains to prove tightness. Writing
\[
Y_t^\eps-Y_s^\eps
=
\int_s^t c(\widetilde X_r^\eps)\,\dif r
+
\eps^{-1}\int_s^t \widetilde b(\widetilde X_r^\eps)\,\dif r
+
\sqrt2\int_s^t \sigma(\widetilde X_r^\eps)\,\dif W_r,
\]
we see that the regular drift term and the martingale term satisfy the required moment bounds immediately because $c$ and $\sigma$ are bounded. Thus the only nontrivial contribution is the singular centered-drift term. By Kolmogorov's continuity criterion, it is enough to show that for any horizon $T>0$ and all $0\leq s\leq t\leq T$,
\begin{equation}\label{eq:kolmogorov}
\sup_{\eps\in(0,1)}
\mE^\eps\left[\Big|\eps^{-1}\int_s^t \widetilde b(\widetilde X_r^\eps)\,\dif r\Big|^3\right]
\leq C_T\,|t-s|^{3/2}.
\end{equation}
Consider the macroscopically accelerated diffusion and scaled Brownian motion:
\begin{equation*}
\widehat X_t^\eps:=\widetilde X_{\eps^2 t}^\eps,
\qquad
\widehat W_t:=\eps^{-1}W_{\eps^2 t}.
\end{equation*}
The process $\widehat X^\eps$ solves the $\mathcal{O}(1)$-volatility SDE
\begin{equation}\label{eq:Xtilde-sde}
\dif \widehat X_t^\eps = \big(b+\eps c\big)(\widehat X_t^\eps)\,\dif t + \sqrt2\,\sigma(\widehat X_t^\eps)\,\dif \widehat W_t.
\end{equation}
Using the temporal change of variables $r = u \eps^2$, the target integral maps to:
\[
\eps^{-1}\int_s^t \widetilde b(\widetilde X_r^\eps)\,\dif r
=\eps\int_{s/\eps^2}^{t/\eps^2}\widetilde b(\widehat X_u^\eps)\,\dif u.
\]
Thus, verifying the tightness condition \eqref{eq:kolmogorov} is equivalent to showing
\begin{equation*}
\mE^\eps\left[\Big|\int_{S_0}^{S_1}\widetilde b(\widehat X_u^\eps)\,\dif u\Big|^3\right]
\leq C_T\,\frac{|t-s|^{3/2}}{\eps^3},
\end{equation*}
where $S_0 = s/\eps^2$ and $S_1 = t/\eps^2$. Let $X$ denote the  unperturbed diffusion \eqref{SDE9},  with expectation operator $\mE^0$. By Girsanov's theorem, the law of \eqref{eq:Xtilde-sde} is absolutely continuous with respect to the law of $X$ on $[0, S_1]$ via the Radon--Nikodym density martingale
\[
\cE_{S_1}^\eps=\exp\left\{\frac{\eps}{\sqrt2}\int^{S_1}_0 \big\langle (\sigma^{-1}c)(\widehat X^\eps_s), \dif \widehat W_s \big\rangle - \frac{\eps^2}{4}\int^{S_1}_0\big|(\sigma^{-1}c)(\widehat X^\eps_s)\big|^2\dif s\right\}.
\]
Since $\sigma^{-1}$ (by uniform ellipticity) and $c$ are uniformly bounded, Novikov's criterion holds uniformly  in $\eps$, ensuring that for every fixed $T>0$, $\sup_{\eps\in(0,1)}\mE^0[(\cE_{S_1}^\eps)^4] \leq C_T$. Applying H\"older's inequality over the measure change yields
\begin{align*}
\mE^\eps\left[\Big|\int_{S_0}^{S_1}\widetilde b(\widehat X_u^\eps)\,\dif u\Big|^3\right]
&\leq \left(\mE^0\big[(\cE_{S_1}^\eps)^4\big]\right)^{1/4} \left(\mE^0\left[\Big|\int_{S_0}^{S_1}\widetilde b(X_u)\,\dif u\Big|^4\right]\right)^{3/4} \\
&\leq C_T \left(\mE^0\left[\Big|\int_{S_0}^{S_1}\widetilde b(X_u)\,\dif u\Big|^4\right]\right)^{3/4}.
\end{align*}
Fix a coordinate $i\in\{1,\dots,d\}$ and set $f=\widetilde b_i$. Expanding the fourth moment algebraically yields:
\[
\mE^0\left[\Big|\int_{S_0}^{S_1} f(X_u)\,\dif u\Big|^4\right]
= 4! \int_{S_0\leq r_1\leq \dots \leq r_4\leq S_1}
\mE^0\big[ f(X_{r_1}) \dots f(X_{r_4}) \big] \dif r_1\cdots\dif r_4.
\]
Invoking Lemma~\ref{lem:mixing}, the  integrand decays exponentially in the time gaps. Integrating this bound over the ordered simplex yields an upper bound of order $\cO((S_1-S_0)^2)$. Thus,
\[
\mE^0\left[\Big|\int_{S_0}^{S_1}\widetilde b(X_u)\,\dif u\Big|^4\right]
\leq C\,(S_1-S_0)^2 = C\left(\frac{t-s}{\eps^2}\right)^2 = C\,\frac{|t-s|^2}{\eps^4}.
\]
Substituting this kinematic bound directly back into the H\"older reduction, we obtain
\[
\mE^\eps\left[\Big|\int_{S_0}^{S_1}\widetilde b(\widehat X_u^\eps)\,\dif u\Big|^3\right]
\leq C_T \left( \frac{|t-s|^2}{\eps^4} \right)^{3/4} = C_T\,\frac{|t-s|^{3/2}}{\eps^3}.
\]
This rigorously confirms the tightness condition \eqref{eq:kolmogorov}.
\end{proof}

\subsection{Periodic homogenization with distributional fast drift}

We now extend the analysis to  the highly singular regime where the fast drift is \emph{distributional}. For clarity of exposition, we take the diffusion coefficient to be the identity; the extension to uniformly elliptic $\bC^\beta$ diffusions follows the same geometric transformation strategy.

We focus primarily on the deeply singular regime $\alpha \in (-1, -1/2)$. Consider the  operator
\begin{equation*}
\sL_\eps f(x)=\Delta_x f(x)+\Big[\eps^{-1}b+c\Big]\!\left(\frac{x}{\eps}\right)\cdot\nabla_x f(x),
\end{equation*}
where $b$ is a distribution and $c$ is classically regular. We impose the following conditions:

\vspace{2mm}
\begin{enumerate}[{\bf(A$^\alpha_\beta$)$''$}]
\item
Let $\alpha\in(-1,-1/2)$ and $\beta\in(0,1)$. Assume that $b\in\bC^\alpha(\mT^d;\mR^d)$ is  divergence-free ($\nabla_x \cdot b = 0$), and $c\in\bC^\beta(\mT^d;\mR^d)$ is $1$-periodic.
\end{enumerate}

\begin{remark}
The restriction $\alpha \in (-1,-1/2)$ places the system in the deeply singular regime. Here, the product $b\cdot \nabla_x \mathbf{u}$  possesses a negative sum of regularities ($2\alpha+1-\theta \leq 0$) and is classically ill-defined via standard Bony paraproducts. One must   exploit the divergence-free condition $\nabla_x \cdot b = 0$ to regularize the transport via commutator estimates (see, e.g., \cite{HZ25}). This delicate  cancellation justifies the fractional heat kernel bounds used below.
\end{remark}

Let $(X_t^\eps)_{t\geq0}$ be the diffusion process solving
\begin{equation}\label{eq:Xeps-dist}
\dif X_t^\eps=\Big[\eps^{-1}b+c\Big]\!\left(\frac{X_t^\eps}{\eps}\right)\dif t+\sqrt2\,\dif W_t,
\qquad X_0^\eps=x\in\mR^d.
\end{equation}
The well-posedness of this singular SDE under \textbf{(A$^\alpha_\beta$)$''$} is established via a periodic Zvonkin transformation.

\subsubsection{Periodic Zvonkin transform}

To eliminate the distributional singularity of $\sL_\eps$, we construct a steady-state Zvonkin-type spatial corrector. Consider the decoupled elliptic PDE system on $\mR^d$:
\begin{equation}\label{eq:Zvonkin-PDE}
\Delta_x u_i - \lambda u_i + b \cdot \nabla_x u_i = f_i, \qquad i=1,\dots,d,
\end{equation}
where $f=(f_1,\dots,f_d)\in\bC^\alpha(\mT^d;\mR^d)$. In the application below we take $f=-b$, which is precisely the choice that cancels the singular drift after applying It\^o's formula.

\begin{theorem}
Let $\alpha\in(-1,-1/2)$ and $\theta\in(0,1+\alpha)$. For any $f\in\bC^\alpha(\mT^d;\mR^d)$, there exists a threshold $\lambda_0=\lambda_0(d,\alpha,\theta,\|b\|_{\bC^\alpha})\geq 1$ such that for all $\lambda\geq\lambda_0$, the elliptic equation \eqref{eq:Zvonkin-PDE} admits a unique weak solution $\mathbf{u}$ satisfying
\begin{equation}\label{Da4}
\|\mathbf{u}\|_{\bC^{2+\alpha-\theta}}
\leq C \lambda^{-\theta/2}\|f\|_{\bC^\alpha},
\end{equation}
where the constant $C$ is independent of $\lambda$.
\end{theorem}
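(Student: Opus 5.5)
The plan is a fixed-point argument for the resolvent equation, built from the heat semigroup on $\mT^d$, with the ill-defined product $b\cdot\nabla_x\mathbf{u}$ defined through the divergence-free structure. Since $\nabla_x\cdot b=0$, we may write $b\cdot\nabla_x u_i=\nabla_x\cdot(b\,u_i)$ at least formally. The product $b\,u_i$ is well defined by Bony's paraproduct as soon as $u_i\in\bC^{2+\alpha-\theta}$: the sum of regularities is $\alpha+2+\alpha-\theta=2+2\alpha-\theta$, and this is positive because $\alpha>-1$ and $\theta<1+\alpha$ force $2+2\alpha-\theta>1+\alpha>0$. One then has the product estimate
\[
\|b\,u_i\|_{\bC^\alpha}\lesssim \|b\|_{\bC^\alpha}\|u_i\|_{\bC^{2+\alpha-\theta}},
\]
and hence
\[
\|\nabla_x\cdot(b\,u_i)\|_{\bC^{\alpha-1}}\lesssim \|b\|_{\bC^\alpha}\|u_i\|_{\bC^{2+\alpha-\theta}}.
\]
This identity serves as the definition of the weak solution: $\mathbf{u}$ solves \eqref{eq:Zvonkin-PDE} in the sense of distributions, with the drift term interpreted as $\nabla_x\cdot(b\,u_i)$. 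It agrees with the classical meaning whenever $b$ is smooth, and it is consistent under mollification of $b$.

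The first step is the resolvent estimate for $(\lambda-\Delta)^{-1}=\int_0^\infty \e^{-\lambda t}P_t\,\dif t$ on the torus. Using the heat-kernel smoothing bound $\|P_tg\|_{\bC^{s+\delta}}\lesssim t^{-\delta/2}\|g\|_{\bC^s}$ for $t\in(0,1]$, together with exponential decay of the mean-zero part for large $t$ and the factor $\e^{-\lambda t}$ in any case, one obtains for $\delta\in[0,2)$ that
\[
\|(\lambda-\Delta)^{-1}g\|_{\bC^{s+\delta}}\lesssim \lambda^{-(2-\delta)/2}\|g\|_{\bC^s}.
\]
Taking $s=\alpha$ and $\delta=2-\theta$ gives $\|(\lambda-\Delta)^{-1}f\|_{\bC^{2+\alpha-\theta}}\lesssim\lambda^{-\theta/2}\|f\|_{\bC^\alpha}$, which is the source term of the estimate. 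Taking $s=\alpha-1$ and $\delta=3-\theta$, which lies in $[0,2)$ because $\theta\in(1,2]$ is excluded, gives a decay factor $\lambda^{-(\theta-1)/2}$. That exponent is negative since $\theta<1+\alpha<1$, so the drift contribution cannot be closed by a direct count. This is the main obstacle.

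To close the argument, I would use the divergence-free structure once more, via the standard commutator and paraproduct splitting for divergence-free distributional drifts cited in the paper \cite{HZ25}. Decompose
\[
b\,u_i=\pi_<(u_i,b)+\pi_\geq(u_i,b),
\]
where $\pi_<(u_i,b)$ is the paraproduct with the low frequencies of $u_i$ multiplying the high frequencies of $b$, and $\pi_\geq(u_i,b)$ collects the remaining paraproduct $\pi_>(u_i,b)$ and the resonant term.
\begin{itemize}
\item The terms in $\pi_\geq(u_i,b)$ carry regularity $2+2\alpha-\theta$, and $1+2\alpha-\theta>\alpha-1$, so after taking the divergence they gain strictly more than $\alpha-1$ and produce a positive power $\lambda^{-\kappa}$ from the resolvent.
\item For the bad paraproduct $\nabla_x\cdot\pi_<(u_i,b)$, use $\nabla_x\cdot b=0$ to rewrite it as $\pi_<(\nabla_x u_i,b)$ plus a commutator. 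The paraproduct $\pi_<(\nabla_x u_i,b)$ has regularity $\alpha$ because $\nabla_x u_i\in L^\infty$, and $1+\alpha-\theta>0$ ensures this.
\end{itemize}
Consequently
\[
\|b\cdot\nabla_x u_i\|_{\bC^{\alpha}}\lesssim\|b\|_{\bC^\alpha}\|\mathbf{u}\|_{\bC^{2+\alpha-\theta}},
\]
and the resolvent estimate with $s=\alpha$ then yields the factor $\lambda^{-\theta/2}$. Making this rigorous requires a localization and commutator argument; I expect this to be the delicate step.

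With the bounds in place, define the map
\[
\mathcal{M}\mathbf{u}:=(\Delta-\lambda)^{-1}\big(f-b\cdot\nabla_x\mathbf{u}\big)
\]
on $\bC^{2+\alpha-\theta}(\mT^d)$. Its Lipschitz constant is at most $C\lambda^{-\theta/2}\|b\|_{\bC^\alpha}$, which is at most $1/2$ once $\lambda\geq\lambda_0$. The contraction mapping principle then gives a unique fixed point, which is a weak solution of \eqref{eq:Zvonkin-PDE}, and
\[
\|\mathbf{u}\|_{\bC^{2+\alpha-\theta}}\leq 2C\lambda^{-\theta/2}\|f\|_{\bC^\alpha},
\]
which is \eqref{Da4}. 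Uniqueness among weak solutions follows from the same contraction estimate applied to the difference of two solutions. Finally, to confirm that this solution is the natural one, approximate $b$ by mollifications $b_n$, which remain divergence-free, solve the smooth problem, and pass to the limit using the uniform bounds and the continuity of the map $b\mapsto b\cdot\nabla_x\mathbf{u}$.
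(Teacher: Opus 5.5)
Your proposal is correct and has the same architecture as the paper's proof. Both use the resolvent representation $\mathbf{u}=(\Delta-\lambda)^{-1}(f-b\cdot\nabla_x\mathbf{u})$, a bound placing the drift term effectively in $\bC^\alpha$ with norm $\lesssim\|b\|_{\bC^\alpha}\|\mathbf{u}\|_{\bC^{2+\alpha-\theta}}$ (so the resolvent yields the factor $\lambda^{-\theta/2}$), and absorption for $\lambda\geq\lambda_0$. The paper imports that bound as the heat-kernel estimate \cite[(2.16)]{HZ25}, whereas you derive it from Bony's decomposition and $\nabla_x\cdot b=0$. In fact $\nabla_x\cdot\pi_{<}(u_i,b)=\pi_{<}(\nabla_x u_i,b)$ exactly, since each Littlewood--Paley block of $b$ is divergence-free, so the step you call delicate needs no commutator or localization. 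You also close with a contraction, which gives existence and uniqueness rather than only the paper's a priori estimate.

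Two slips in your motivating discussion do not affect the argument:
\begin{enumerate}
\item With $s=\alpha-1$ you would need $\delta=3-\theta>2$, not a $\delta$ in $[0,2)$. The resolvent cannot gain that much, so the direct count fails outright rather than producing a bad power of $\lambda$.
\item The comparison your consolidated $\bC^\alpha$ bound actually uses is $1+2\alpha-\theta\geq\alpha$ (i.e.\ $\theta\leq 1+\alpha$). The condition $1+2\alpha-\theta>\alpha-1$ would not by itself give decay, since decay requires source regularity above $\alpha-\theta$ and $\alpha-1<\alpha-\theta$.
\end{enumerate}
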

\begin{proof}
It suffices to establish the a priori estimate \eqref{Da4}. By Duhamel's principle, the solution admits the resolvent representation:
\[
\mathbf{u}(x)=\int^\infty_0\e^{-\lambda t}P_t\big(b\cdot \nabla_x \mathbf{u}+f\big)(x)\,\dif t,
\]
where  $P_t:=\e^{t\Delta}$ is the standard Gaussian heat semigroup.
Because $\alpha < -1/2$, the  product $b \cdot \nabla \mathbf{u}$ has a negative sum of regularities and is classically undefined. However, exploiting the divergence-free condition $\nabla \cdot b = 0$, the singular product is  regularized via commutators. The  fractional heat kernel estimates for singular divergence-free convective terms established in \cite[(2.16)]{HZ25} give
\[
\big\|P_t(b\cdot \nabla_x \mathbf{u})\big\|_{\bC^{2+\alpha-\theta}} \leq C t^{-1+\theta/2}\|b\|_{\bC^\alpha}\|\mathbf{u}\|_{\bC^{2+\alpha-\theta}},\qquad t>0.
\]
Integrating against the resolvent yields
\begin{align*}
\|\mathbf{u}\|_{\bC^{2+\alpha-\theta}}
&\leq \int^\infty_0 \e^{-\lambda t} \Big( \big\|P_t(b\cdot \nabla_x \mathbf{u})\big\|_{\bC^{2+\alpha-\theta}} + \big\|P_t f\big\|_{\bC^{2+\alpha-\theta}} \Big) \,\dif t\\
&\leq C \int^\infty_0 \e^{-\lambda t} t^{-1+\theta/2} \Big( \|b\|_{\bC^\alpha}\|\mathbf{u}\|_{\bC^{2+\alpha-\theta}} + \|f\|_{\bC^\alpha} \Big) \,\dif t\\
&\leq C \lambda^{-\theta/2} \Big( \|b\|_{\bC^\alpha}\|\mathbf{u}\|_{\bC^{2+\alpha-\theta}} + \|f\|_{\bC^\alpha} \Big),
\end{align*}
where we used $\int_0^\infty \e^{-\lambda t} t^{-1+\theta/2} \dif t \lesssim \lambda^{-\theta/2}$. Choosing $\lambda_0$ sufficiently large so that $C\lambda^{-\theta/2}\|b\|_{\bC^\alpha}<1/2$, the term involving $\|\mathbf{u}\|_{\bC^{2+\alpha-\theta}}$ can   be   absorbed into the left-hand side,  yielding the global contraction bound \eqref{Da4}.
\end{proof}

Let $\mathbf{u}=(u_1,\dots, u_d)$ be the unique solution to \eqref{eq:Zvonkin-PDE} with $f=-b$ and with $\lambda$ chosen large enough (using \eqref{Da4}) so that $\|\mathbf{u}\|_{\bC^{2+\alpha-\theta}}\leq1/2$.
Since $b$ is periodic, $\mathbf{u}$  inherits this periodicity. We construct the periodic Zvonkin map by
\begin{equation}\label{eq:Phi}
\Phi(x):=x+\mathbf{u}(x),\qquad x\in\mR^d.
\end{equation}
Since $2+\alpha-\theta > 1$, $\Phi(x)$ generates a global $C^1$-diffeomorphism and satisfies
\begin{equation*}
\frac{1}{2}|x-x'|\leq |\Phi(x)-\Phi(x')|\leq \frac{3}{2}|x-x'|.
\end{equation*}

\subsubsection{Reduction to the H\"older regime}

To  bypass the distributional singularity, we introduce the macroscopically scaled process
\begin{equation*}
\widehat X_t^\eps:=\eps^{-1}X_{\eps^2 t}^\eps,\qquad \widehat W_t:=\eps^{-1}W_{\eps^2 t}.
\end{equation*}
Under this scaling, \eqref{eq:Xeps-dist} becomes
\begin{equation*}
\dif \widehat X_t^\eps =\big(b+\eps c\big)(\widehat X_t^\eps)\,\dif t+\sqrt2\,\dif \widehat W_t.
\end{equation*}
Let $\widehat Y_t^\eps:=\Phi(\widehat X_t^\eps)$ be the spatially transformed  trajectory. Applying It\^o's formula and utilizing the  cancellation identity $\Delta_x \mathbf{u} + b \cdot \nabla_x \mathbf{u} = \lambda \mathbf{u} - b$, we find that $\widehat Y^\eps$ solves the regularized SDE (see \cite[Lemma 4.5]{HZ25}):
\begin{equation}\label{eq:regularized-sde}
\dif \widehat Y_t^\eps
=
\big(\widehat b+\eps\widehat c\big)(\widehat Y_t^\eps)\,\dif t
+\sqrt2\,\widehat \sigma(\widehat Y_t^\eps)\,\dif \widehat W_t,
\end{equation}
where
\[
\widehat b(y):=\lambda \mathbf{u}\big(\Phi^{-1}(y)\big),\quad \widehat c(y):=\big((I_d+\nabla \mathbf{u})c\big)\big(\Phi^{-1}(y)\big),
\quad \widehat \sigma(y):=\big(I_d+\nabla\mathbf{u}\big)\big(\Phi^{-1}(y)\big).
\]
Crucially, these  coefficients possess strictly positive H\"older regularity:
\[
\widehat b\in \bC^{2+\alpha-\theta}(\mT^d),
\quad
\widehat c\in \bC^{\beta\wedge(1+\alpha-\theta)}(\mT^d),
\quad
\widehat \sigma\in\bC^{1+\alpha-\theta}(\mT^d).
\]
Set $\widehat\alpha := \beta\wedge(1+\alpha-\theta)$. Since $\theta < 1+\alpha$, we have $\widehat\alpha > 0$. Thus, the highly singular problem is transformed into the regular H\"older framework \textbf{(A$^{\widehat\alpha}$)} of Subsection~\ref{subsec:holder-homo}.

\subsubsection{Homogenization}
Let $\widehat \mu$ be the unique invariant probability measure of the transformed fast diffusion governed by $\widehat b$ and $\widehat \sigma$, and define its effective macroscopic drift
\[
\bar b:=\int_{\mT^d}\widehat b(x)\,\widehat\mu(\dif x).
\]
Let $\cF$ and $\cG$ be the effective coefficients defined in \eqref{eq:eff-coeff}, computed from the transformed regularized dynamics. By mapping our uniform bounds back through the bounded corrector geometry, we prove the main distributional homogenization theorem.

\begin{theorem}[Periodic homogenization with distributional drift]
Assume {\bf (A$^\alpha_\beta$)$''$} and let $\gamma>1$.
There exists a constant $C>0$  (depending only on parameters and defined norms) such that for all $\eps\in(0,1)$ and any periodic observable $\varphi\in\bC^\gamma(\mT^d)$,
\begin{equation}\label{eq:dist-unif}
\sup_{t\geq0}\sup_{x\in\mR^d}
\Big|
\mE\big[\varphi(X_t^\eps(x)-\bar b\,t/\eps)\big]
-
\mE\big[\varphi(x+\cF t+\sqrt{2\cG}W_t)\big]
\Big|
\leq C\,\eps\|\varphi\|_{\bC^\gamma(\mT^d)}.
\end{equation}
\end{theorem}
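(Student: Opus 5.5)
The plan is to reduce the distributional problem to Theorem~\ref{thm:holder-homo} through the Zvonkin map, and then transfer the estimate back using the fact that $\Phi-\mathrm{id}=\mathbf{u}$ is bounded and periodic.

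First, we rewrite the macroscopic process in transformed variables. Recall $\widehat Y^\eps_t=\Phi(\widehat X^\eps_t)$, which solves \eqref{eq:regularized-sde} with periodic coefficients $\widehat b,\widehat c,\widehat\sigma\in\bC^{\widehat\alpha}(\mT^d)$. Uniform ellipticity of $\widehat\sigma=(I+\nabla\mathbf u)\circ\Phi^{-1}$ follows from $\|\nabla\mathbf u\|_\infty\le 1/2$. Undo the time scaling by setting $\widetilde X^\eps_t:=\widehat Y^\eps_{t/\eps^2}$ and
\[
V^\eps_t:=\eps\,\Phi(X^\eps_t/\eps)=X^\eps_t+\eps\,\mathbf u(X^\eps_t/\eps).
\]
Then $V^\eps_t=\eps\widetilde X^\eps_t$, and $V^\eps$ solves
\[
\dif V^\eps_t=\big[\eps^{-1}\widehat b+\widehat c\big](V^\eps_t/\eps)\,\dif t+\sqrt2\,\widehat\sigma(V^\eps_t/\eps)\,\dif W_t,
\]
which is exactly the H\"older setting of Subsection~\ref{subsec:holder-homo} with $(b,c,\sigma)$ replaced by $(\widehat b,\widehat c,\widehat\sigma)$, satisfying {\bf (A$^{\widehat\alpha}$)}. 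The initial point is $V^\eps_0=x+\eps\mathbf u(x/\eps)$. Theorem~\ref{thm:holder-homo} then gives, uniformly in $t$ and in the starting point $x'$,
\[
\big|\mE\varphi(V^\eps_t-\bar b t/\eps)-\mE\varphi(x'+\cF t+\sqrt{2\cG}W_t)\big|\le C\eps\|\varphi\|_{\bC^\gamma},\qquad x'=x+\eps\mathbf u(x/\eps).
\]
Here $\cF,\cG,\bar b$ are the effective quantities of the transformed dynamics, as defined in the statement.

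Next, we compare the two sides with the original quantities. Since $|V^\eps_t-X^\eps_t|\le\eps\|\mathbf u\|_\infty$ and $\gamma>1$, so that $\varphi$ is Lipschitz, we get $|\mE\varphi(X^\eps_t-\bar bt/\eps)-\mE\varphi(V^\eps_t-\bar bt/\eps)|\le \eps\|\mathbf u\|_\infty\|\nabla\varphi\|_\infty$, uniformly in $t$. Similarly, replacing the starting point $x'$ by $x$ in the limit Brownian motion costs at most $\eps\|\mathbf u\|_\infty\|\nabla\varphi\|_\infty$. Combining these three estimates gives \eqref{eq:dist-unif}, with a constant depending on $\|\mathbf u\|_\infty$ and hence on $\lambda_0$, $\|b\|_{\bC^\alpha}$ and the other parameters.

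The main obstacle is justifying rigorously that the singular process $X^\eps$ and the transformed $V^\eps$ are related by It\^o's formula. The difficulty is that $X^\eps$ is only defined through the Zvonkin transformation, that is, as a martingale or generalized solution. We will take this as the definition of the solution, following \cite{HZ25}: well-posedness is transported from \eqref{eq:regularized-sde}, and \cite[Lemma 4.5]{HZ25} supplies the It\^o identity. A secondary check is that $\widehat b,\widehat c,\widehat\sigma$ are genuinely $\mZ^d$-periodic. This holds because $\Phi(x+k)=\Phi(x)+k$, so $\Phi^{-1}$ commutes with integer translations, which keeps Theorem~\ref{thm:holder-homo} applicable.
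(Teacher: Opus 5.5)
Your proposal is correct and follows the paper's proof essentially step for step. You pass to $\eps\Phi(X^\eps_t/\eps)=X^\eps_t+\eps\mathbf{u}(X^\eps_t/\eps)$, which solves the $\eps$-periodic H\"older multiscale SDE with coefficients $(\widehat b,\widehat c,\widehat\sigma)$; you apply Theorem~\ref{thm:holder-homo} from the shifted starting point $x+\eps\mathbf{u}(x/\eps)$; and you remove the two $O(\eps\|\mathbf{u}\|_\infty)$ discrepancies (at time $t$, and in the initial point of the limiting Brownian motion) using that $\varphi$ is Lipschitz for $\gamma>1$. Your extra checks are points the paper uses only implicitly: uniform ellipticity of $\widehat\sigma$ from $\|\nabla\mathbf{u}\|_\infty\leq 1/2$, $\mZ^d$-periodicity via $\Phi(x+k)=\Phi(x)+k$, and the It\^o identity from \cite[Lemma 4.5]{HZ25}.
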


\begin{proof}
To satisfy the $1$-periodic observable requirement of the uniform homogenization framework, we consider the macroscopic process directly, rather than dynamically scaling the observable (which creates an $\eps^{-1}$-periodic function and  violates Theorem~\ref{thm:holder-homo}). Define the macroscopic Zvonkin process:
\[
Z_t^\eps := \eps \widehat Y^\eps_{t/\eps^2}.
\]
By applying the inverse It\^o scaling to \eqref{eq:regularized-sde}, and noting that  $W^\eps_t := \eps \widehat W_{t/\eps^2}$ has the same law as $W_t$, we find that $Z_t^\eps$  solves the standard $\eps$-periodic multiscale SDE:
\[
\dif Z_t^\eps = \Big[\eps^{-1}\widehat b + \widehat c\Big]\!\left(\frac{Z_t^\eps}{\eps}\right)\dif t + \sqrt{2}\widehat\sigma\!\left(\frac{Z_t^\eps}{\eps}\right)\dif W_t.
\]
Since $(\widehat b, \widehat c, \widehat \sigma)$  possess positive H\"older regularity, Theorem~\ref{thm:holder-homo} applies to the centered macroscopic process $\bar Z_t^\eps := Z_t^\eps - \bar b t/\eps$. Evaluating the original fixed $1$-periodic observable $\varphi$ at the initial condition $Z_0^\eps = \eps\Phi(x/\eps) = x + \eps\mathbf{u}(x/\eps)$, we  obtain:
\begin{equation}\label{eq:Z-limit}
\sup_{t\geq0}\sup_{x\in\mR^d} \Big| \mE\big[\varphi(\bar Z_t^\eps)\big] - \mE\big[\varphi(Z_0^\eps+\cF t+\sqrt{2\cG}W_t)\big] \Big| \leq C\eps \|\varphi\|_{\bC^\gamma(\mT^d)}.
\end{equation}

Finally, we  reconcile $\bar Z_t^\eps$ with the original state $X_t^\eps$. Unrolling the macroscopic scaling and using the Zvonkin diffeomorphism \eqref{eq:Phi}, we have
\[
Z_t^\eps = \eps \Phi\big(\widehat X^\eps_{t/\eps^2}\big) = \eps \Phi\big(\eps^{-1}X_t^\eps\big) = X_t^\eps + \eps \mathbf{u}\big(\eps^{-1}X_t^\eps\big).
\]
Subtracting the secular drift yields
\[
\bar Z_t^\eps = \Big(X_t^\eps - \frac{\bar b}{\eps}t\Big) + \eps \mathbf{u}\big(\eps^{-1}X_t^\eps\big).
\]
Since   $\varphi$ is globally Lipschitz (as $\gamma>1$), the geometric error is bounded by
\[
\Big|\varphi(\bar Z_t^\eps) - \varphi\Big(X_t^\eps - \frac{\bar b}{\eps}t\Big)\Big|
\leq \|\nabla\varphi\|_\infty \eps \|\mathbf{u}\|_\infty
\leq C\eps \|\varphi\|_{\bC^\gamma(\mT^d)}.
\]
A similar Lipschitz estimate replaces the shifted initial condition $Z_0^\eps = x + \eps \mathbf{u}(x/\eps)$ by $x$ in  the Brownian expectation on the right-hand side of \eqref{eq:Z-limit}. Taking expectations and applying the triangle inequality against the uniform approximation bound \eqref{eq:Z-limit}   yields \eqref{eq:dist-unif}.
\end{proof}

\end{document}